  \numberwithin{equation}{section} 
  \date{}
\theoremstyle{plain} 
    \newtheorem{theorem}{Theorem}
    \numberwithin{theorem}{section} 
    \newtheorem{lemma}[theorem]{Lemma}
    \newtheorem{proposition}[theorem]{Proposition}
\theoremstyle{definition} 
    \newtheorem{definition}[theorem]{Definition}
    \newtheorem{remark}[theorem]{Remark}
\DeclareMathOperator{\R}{\mathbb{R}}
\DeclareMathOperator{\Z}{\mathbb{Z}}
\DeclareMathOperator{\N}{\mathbb{N}}
\DeclarePairedDelimiter\ceil{\lceil}{\rceil}
\DeclarePairedDelimiter\floor{\lfloor}{\rfloor}
\DeclareMathOperator{\De}{d}
\newcommand{\e}{\mathrm{e}}
\newcommand{\eps}{\varepsilon}
\newcommand{\vr}{\varphi}
\newcommand{\var}{\mathbf{Var}}
\newcommand{\prob}{\mathbf{P}}
\newcommand{\E}{\mathbf{E}}
\newcommand{\cov}{\mathbf{Cov}}
\newcommand{\la}{\left\langle}
\newcommand{\ra}{\right\rangle}
\begin{document}
\title{Scaling Limit of Semiflexible Polymers: a Phase Transition }

\author[A. Cipriani]{Alessandra Cipriani}
\address{TU Delft (DIAM), Building 28, van Mourik Broekmanweg 6, 2628 XE, Delft, The Netherlands}
\email{A.Cipriani@tudelft.nl}
\author[B. Dan]{Biltu Dan}
\author[R.~S.~Hazra]{Rajat Subhra Hazra}
\thanks{The first author is supported by the grant 613.009.102 of the Netherlands Organisation for Scientific Research (NWO). The third author acknowledges the MATRICS grant from SERB and the Dutch stochastics cluster STAR (Stochastics -- Theoretical and Applied Research) for an invitation to TU Delft where part of this work was carried out. The authors would like to thank Luca Avena and Alberto Chiarini for their remarks that led to the draft of the present article. We are very grateful to an anonymous referee for her/his insightful comments and for outlining a new proof of Theorem~\ref{approx_result}~\ref{thm:gff_one} improving on a previous version.}
\address{ISI Kolkata, 203, B.T. Road, Kolkata, 700108, India}
\email{biltudanmath@gmail.com, rajatmaths@gmail.com}

\date{}

\begin{abstract}
We consider a semiflexible polymer in $\Z^d$ which is a random interface model with a mixed gradient and Laplacian interaction. The strength of the two operators is governed by two parameters called lateral tension and bending rigidity, which might depend on the size of the graph. In this article we show a phase transition in the scaling limit according to the strength of these parameters: we prove that the scaling limit is, respectively, the Gaussian free field, a ``mixed'' random distribution and the continuum membrane model in three different regimes.
\end{abstract}
\keywords{$(\nabla+\Delta)$-model, Gaussian free field, membrane model, random interface, scaling limit}
\subjclass[2000]{31B30, 60J45, 60G15, 82C20}
\maketitle

\section{\bf Introduction}
In this article we study a model which is a special instance of a more general class of random interfaces. Random interfaces are fields $\phi=(\phi_{x})_{x\in\Z^{d}}$, whose distribution is specified by a probability measure on $\mathbb{R}^{\mathbb{Z}^{d}}$,
$d\ge1$. The density is given in terms of an energy function $H$ called Hamiltonian and has the form
\begin{align}\label{eq:measure_def}
\prob_{\Lambda}(\mathrm{d}\phi):=\frac{\mathrm{e}^{-H(\phi)}}{Z_\Lambda}\prod_{x\in{\Lambda}}\mathrm{d}\phi_{x}\prod_{x\in\mathbb{Z}^{d}\setminus \Lambda}
\delta_{0}(\mathrm{d}\phi_{x}),
\end{align}
where $\Lambda\Subset\mathbb{Z}^{d}$ is a finite subset, $\mathrm{d}\phi_{x}$ is the Lebesgue measure on $\mathbb{R}$, $\delta_{0}$ is the Dirac measure at $0,$ and $Z_{\Lambda}$ is a normalizing constant. We are imposing zero boundary conditions: almost surely $\phi_{x}=0$ for all $x\in\mathbb{Z}^{d}\setminus{\Lambda}$, but the definition holds for more general boundary conditions. A special case is when the Hamiltonian is given by
\begin{equation}\label{eq:Ham_def}H(\vr)=\sum_{x\in\Z^d}\left(\kappa_1\|\nabla\vr_x\|^2+\kappa_2(\Delta\vr_x)^2\right)\end{equation}
where $\nabla$ is the discrete gradient and $\Delta$ is the discrete Laplacian defined by 
$$\nabla f(x)=(f(x+e_i)-f(x))_{i=1}^d$$
$$\Delta f(x)= \frac1{2d}\sum_{i=1}^d \left( f(x+e_i) +f(x-e_i) -2f(x) \right)$$
for any $x\in \Z^d$, $f:\Z^d\to\R$, and $\kappa_1,\,\kappa_2$ are two non-negative parameters. In the physics literature, the above Hamiltonian is considered to be the energy of a semiflexible membrane (or semiflexible polymer if $d=1$) where the parameters $\kappa_1$ and $\kappa_2$ are the {\it lateral tension} and the {\it bending rigidity}, respectively (\cite{Leibler:2006, ruiz2005phase, lipowsky1995generic}).
  
When $\kappa_2=0$, the model is the purely gradient model and it is known as the discrete Gaussian free field. In this case the Hamiltonian is governed by the surface area of the interface. When $\kappa_1=0$, the model is called the membrane, or Bilaplacian, model. In this case the Hamiltonian is governed by the curvature of the interface. More generally the Hamiltonian is governed by an interplay of the surface area and the curvature, hence one considers the model with both gradient and Laplacian interaction. The main aim of this article is to show how the dependency on {the size of the set $\Lambda$} of $\kappa_1$ and $\kappa_2$ affects the scaling limit of $\prob_{\Lambda}$. 

 When $\kappa_1=0$ or $\kappa_2=0$, the scaling limit of the model is well-understood. The literature on the discrete Gaussian free field is huge due to its connection to various other probabilistic objects and we refer the interested reader  to the lecture notes and survey articles~\cite{bere, biskup2017extrema, sheffield2007gaussian}. We refer to \cite{mm_scaling, CarJDScaling,HryVel} for the scaling limit of the membrane model in $d\ge 1$. The literature on the case when $\kappa_1>0, \kappa_2>0$ is limited and has been considered in the works of \cite{sakagawa2018,borecki2010, CarBor,mixed_scaling}. \cite{borecki2010} and \cite{CarBor} introduced this model as the $(\nabla+\Delta)$-model (we will also refer to it as ``mixed model'') with constant $\kappa_1,\,\kappa_2$. They studied in $d=1$ the influence of pinning in order to understand the localization behavior of the polymer. The results were extended to higher dimensions, together with further properties of the free energy, in~\cite{sakagawa2018}. In \cite{mixed_scaling} the scaling limit of the $(\nabla+\Delta)$-model is studied. There it is shown that if one lets the lattice size go to zero, under a suitable scaling the Laplacian term is dominated by the gradient and the limit becomes the Gaussian free field. A very natural question, which we aim at investigating in this paper, is whether one can interpolate between the continuum Gaussian free field and the membrane model by tuning $\kappa_2/\kappa_1$ suitably. To the best of our knowledge, the influence of the length on the shape of the polymer through $\kappa_1$ and $\kappa_2$ has not been systematically addressed in the literature. In~\cite{ruiz2005phase} a phase transition on the surface tension for mixed polymers has been investigated according to a suitable rescaling of $\sqrt{\kappa_2/\kappa_1}$ depending on the lattice size. However the model studied in~\cite{ruiz2005phase} is integer-valued, so it differs from the one studied in the present paper. 
 
We now briefly describe the phase transition picture which appears in the scaling limit. We restrict our focus to $d=1$ for heuristic explanations. Let us consider the Hamiltonian described in~\eqref{eq:Ham_def}. We take {$\Lambda = \{1,\, \ldots,\, N-1\}$ for $N\in \N, \,\kappa_1=1/4$ and $\kappa_2= \kappa(N)/2$}. In $d=1$ in the DGFF case ($\kappa_2=0$) it is well-known that the finite volume measure can be given by a random walk bridge and in the membrane case ($\kappa_1=0$) by an integrated random walk bridge~(\cite{CaravennaDeuschel_pin}). Therefore the scaling limit for the DGFF and membrane turns out to be Brownian bridge and the integrated Brownian bridge, respectively. In $d=1$, a representation for the $(\nabla+\Delta)$-model using random walks was obtained in~\cite{borecki2010}. The details of the representation are recalled in Appendix~\ref{appendix:C}.

  Let $\gamma$ and $\sigma$ be as in \eqref{eq:gamma} and \eqref{eq:sigma}, respectively. Let $(\widetilde \eps_i)_{i\in \Z^{+}}$ be i.i.d. normal random variables with mean zero and variance $\sigma^2/(1-\gamma)^2$. {For $n\ge 1$}, let $W_n= S_n-U_n$,
where $S_n=\sum_{k=1}^n \widetilde \eps_k$ and
$U_n= \gamma^n \widetilde \eps_1+\gamma^{n-1} \widetilde \eps_2+\cdots+ \gamma  \widetilde \eps_n$.
From~\citet[Proposition 1.10]{borecki2010}  it is known that the finite volume measure of the model is given by the joint distribution of {$(W_n)_{1 \le n\le N-1}$} conditioned on $W_{N}=W_{N+1}=0$. We look at the unconditional process and see how the parameter $\kappa(N)$ changes the variance. It follows from~\eqref{eq:gamma} and \eqref{eq:sigma} that {
$$\sigma^2\approx \frac{1}{\kappa(N)}\; \text{ and } \;(1-\gamma) \approx \frac{1}{\sqrt{\kappa(N)}}.$$}
So for the case when $\kappa(N)\ll N^2$ we have {
$$\var(S_{N-1})\approx N ,\; \var(U_{N-1})\approx \sqrt{\kappa(N)}\, \text{ and } \,\cov(S_{N-1}, U_{N-1})\approx \sqrt{\kappa(N)}$$
which together imply that $\var(W_{N-1}) \approx N$}, thus the random walk dominates with its scaling $\sqrt{N}$.

When $\kappa(N)\gg N^2$ the situation is a bit more complicated and one can compute that (see Appendix~\ref{appendix:C})
$$\var(W_{N-1})\approx \frac{N^3}{\kappa(N)}.$$
It turns out that the Laplacian part dominates under this scaling. When $\kappa(N)\sim N^2$ then the contribution from $S_{N-1}$ and $U_{N-1}$ is similar and hence both the gradient and Laplacian interaction come into picture. The reader can see a simulation of the free boundary case, that is, the trajectories of {$(W_n)_{1 \le n\le N}$}, in Figure~\ref{fig1} and Figure~\ref{fig2}. We plotted the two cases $\kappa\ll N^2$ and $\kappa\gg N^2$ in different pictures as the height scalings are different. 

\begin{figure}[ht!]
\includegraphics[scale=0.7]{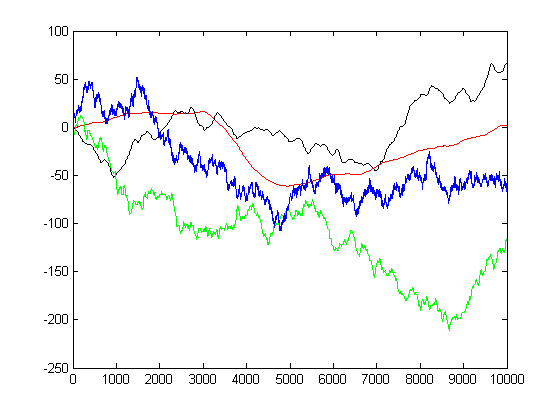}\caption{Simulation of some trajectories of {$(W_n)_{1 \le n\le N}$} with $N=10^4$ and {\color{blue}$\kappa=0$}, {\color{green}$\kappa=2\times 10^2$}, {\color{black}$\kappa=2\times 10^4$}, {\color{red}$\kappa=2\times 10^6$}.}
\label{fig1}
\end{figure}

\begin{figure}[ht!]
\includegraphics[scale=0.7]{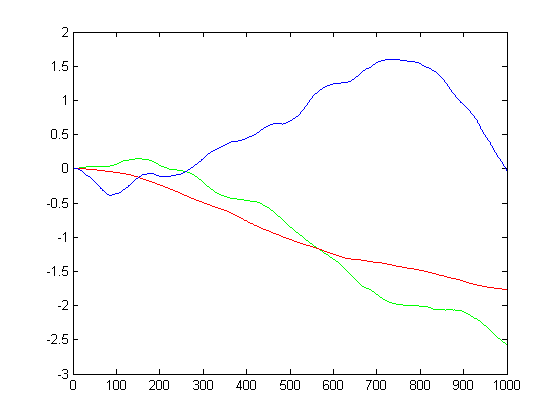}\caption{Simulation of some trajectories of {$(W_n)_{1 \le n\le N}$} with $N=10^3$ and {\color{blue}$\kappa=2\times 10^{6.5}$}, {\color{green}$\kappa=2\times 10^7$}, {\color{red}$\kappa=2\times 10^8$}.}
\label{fig2}
\end{figure}

We stress that in the above description we did not consider boundary effects which can cause considerable difficulty in understanding these processes explicitly. In Appendix~\ref{appendix:C} we have pointed out the conditional representation of $W_{N-1}$. One can see that it is not easy to determine whether the above transition can be pushed to the conditional processes and hence the finite volume measure. The aim of this article is to go beyond such representations and show the above transition holds true in general dimensions and get the explicit limits in each of the cases. In this respect, we also record that the integrated random walk representations of $d=1$ cannot be extended to $d> 1$. We mainly use finite difference methods in the proof of the main results.  In a recent work, the authors of the present article introduced a finite difference method to approximate solutions of PDEs to successfully obtain the scaling limit of the membrane model and the $(\nabla+\Delta)$-model with fixed coefficients (see~\cite{mm_scaling,mixed_scaling}). The idea was inspired by the work~\cite{thomee}. 
 Finite difference methods were also employed in the works~\cite{Mueller:Sch:2017, schweiger:2019} to obtain important estimates on the discrete Green's function of the membrane model.  

The main results of the article are as follows. We consider the model on $\Lambda_N \Subset \Z^d $ for a suitable $\Lambda_N$ defined later in Section \ref{section:main results}. Also, we assume $\kappa_1=1/(4d),\,\kappa_2=\kappa(N)/2$ and distinguish three regimes for $\kappa = \kappa(N)$.
\begin{itemize}
\item[(a)] Let $\kappa\gg N^2$. In $d\ge 1$, we show that the appropriately rescaled field converges to the continuum membrane model. The continuum membrane model is roughly a centered Gaussian process whose covariance is given by the Green's function of the Bilaplacian Dirichlet problem. For $d\ge 4$, in Theorem~\ref{thm:main} we show the convergence takes place in a distributional space (more precisely a negatively-indexed Sobolev space). In $d=1,2$ and $3$ we show in Theorem~\ref{thm:main2} that the limiting Gaussian process has continuous paths.

\item[(b)]  Let $\kappa\sim 2dN^2$. In $d\ge 4$ we show (Theorem~\ref{thm:main}) that the rescaled field converges to a random distribution in an appropriate Sobolev space and the covariance of the limiting Gaussian field is given by the Dirichlet problem involving the elliptic operator $-\Delta_c +\Delta_c^2$. In $d=1,2$ and $3$, again we show (in Theorem~\ref{thm:main2}) the convergence takes place in the space of continuous functions. 

\item[(c)] Let $\kappa\ll N^{2}$. In $d\ge 2$ we show (in Theorem~\ref{thm:main}) that the rescaled field converges in distribution to the Gaussian free field. Again, since the Gaussian free field is a random distribution the convergence takes place in a negatively-indexed Sobolev space. In $d=1$, we show (in Theorem~\ref{thm:main2}) that the limiting process is the Brownian bridge, confirming the heuristics presented above.
\end{itemize}

To derive the above results, the main technique we use is the approximation of the solution of a continuum Dirichlet problem with its discrete counterpart. Using Sobolev estimates it can be shown that the closeness of the solutions is related to the approximation of the discrete elliptic operator to the continuum one. This idea has been already employed in~\cite{mm_scaling} and~\cite{mixed_scaling}.

But in the present scenario, the discrete elliptic operators have coefficients which depend on $N$ and hence the estimates of~\cite{thomee} are not applicable directly. In addition, the rough behavior around the boundary in the case of constant coefficients was dealt with by considering a truncation of the discrete elliptic operator. The operators were rescaled around the boundary and this helped in controlling their behavior. The same technique becomes a bit more involved in the present case. This helps us to tackle with the cases $\kappa\gg N^2$ and $\kappa\sim 2d N^2$ but the method falls short when $\kappa\ll N^2$. In this case an anonymous referee pointed out to the authors the idea of dealing with the boundary effects and discretization separately, adjusting the boundary values with an appropriate cut-off function. We deal with these technical issues in Section~\ref{main_ingredient}. Let us mention in passing that we believe that the result in Section~\ref{main_ingredient} is of independent interest and can be applied to discrete elliptic operators where coefficients depend on the scaling of the lattice.

\paragraph{{\em Structure of the article}} In Section~\ref{section:main results} we state our main results precisely. Furthermore, in its Subsection~\ref{main_ingredient} we discuss the approximation technique and the norm estimates in detail, while in Subsection~\ref{open_problems} we mention some open problems. In Section~\ref{sec:main_proofs} we derive the proof of Theorem~\ref{thm:main} and in Section~\ref{sec:main2_proofs} we deal with the lower dimensional case (Theorem~\ref{thm:main2}). In Section~\ref{proof_approx_result} we provide a proof of the approximation results stated in~Subsection~\ref{main_ingredient}. These are mainly improvements of the results of~\cite{thomee}. 

\paragraph{{\em Notation}} {For real-valued functions $f(\cdot),\,g(\cdot)$ we write $f\gg g,\, f\sim g,\, f\approx g,\,f\ll g$ when $\lim_{n\to \infty} \frac{f(n)}{g(n)}$ equals $\infty,\,1,\,c$ and $0$, respectively, where $c$ is a non zero constant which may be 1 also}. Also we write $f\asymp g$ if there exist two positive constants $c_\ell, c_r$ such that $c_\ell g(n) \le f(n) \le c_r g(n)$ for all $n$. We denote by $C$ a universal constant that may change from line to line within the same equation. In what follows, we shall use $\Delta$ and $\Delta_c$ to denote the discrete and continuous Laplacian respectively. Also $\partial_j$ respectively $\frac{\partial}{\partial x_j}$ denotes the discrete respectively continuous derivative in the $j$-th coordinate. 

\section{\bf Set-up and main results} \label{section:main results}
Let $\Lambda$ be a finite subset of $\Z^d$, $d\ge 1$, and $\prob_{\Lambda}$ and $H(\vr)$ be as in~\eqref{eq:measure_def} and \eqref{eq:Ham_def} respectively.  It follows from Lemma 1.2.2 of~\cite{Kurt_thesis} that the Gibbs measure~\eqref{eq:measure_def} on $\R^{\Lambda}$ with Hamiltonian~\eqref{eq:Ham_def} exists. Note that~\eqref{eq:Ham_def} can be written as 
 \begin{equation}\label{eq:ham:mixed}
H(\vr)=\frac{1}{2}\langle\vr,(-4d\kappa_1\Delta+2\kappa_2\Delta^{2})\vr\rangle_{\ell^{2}(\mathbb{Z}^{d})}.
\end{equation}
Let $d\ge 1$. Let $D$ be a bounded domain in $\R^d$. For $N\in \mathbb N$, let $D_N= N\overline D \cap \mathbb{Z}^d$. Let us denote by $\Lambda_N$ the set of points $x$ in $D_N$ such that, for every  direction $i, j,$ also the points $\,x\pm e_i ,\,x\pm (e_i\pm e_j)$ are all in $D_N$. In other words, $\Lambda_N\subset N\overline D\cap \Z^d$ is the largest set satisfying $\partial_2\Lambda_N\subset N\overline D\cap \Z^d$ where $\partial_2\Lambda_N:=\{y\in\Z^d\setminus\Lambda_N:\mathrm{dist}(y,\,\Lambda_N)\le 2\}$ is the double (outer) boundary of $\Lambda_N$ of points at $\ell^1$ distance at most $2$ from it. We consider the model with $\Lambda = \Lambda_N,\,\kappa_1=1/4d,\,\kappa_2=\kappa(N)/2$ and want to study what happens when we tune suitably the parameter $\kappa(N)$ as $N$ tends to infinity. We assume $\kappa_1$ to be constant as it is easy to state the results in this format. Also for simplicity we write $\kappa$ for $\kappa(N)$. We just note here that if we write $G_{\Lambda_N}(x,\,y):=\E_{\Lambda_N}(\vr_x\vr_y)$, it follows from Lemma 1.2.2 of \cite{Kurt_thesis} that $G_{\Lambda_N}$ solves the following discrete boundary value problem:  for $x\in \Lambda_N$
\begin{equation}\label{eq:cov}
\left\{\begin{array}{lr}
(-\Delta+ \kappa\Delta^2) G_{\Lambda_N}(x,y) = \delta_x(y)& y\in \Lambda_N\\
G_{\Lambda_N}(x,y) = 0  & y\notin \Lambda_N\end{array}.\right.
\end{equation}

To describe the main results we need some elliptic operators. We first introduce them and the corresponding Dirichlet problem.
Let $L$ denote one of the following three elliptic operators: 
\begin{equation}\label{def:L}
L=\begin{cases}-\Delta_c,\\ \Delta_c^2,\\ -\Delta_c +\Delta_c^2,\end{cases}\end{equation}
where $\Delta_c$ is the Laplace operator defined by $\Delta_c=\sum_{i=1}^d\frac{\partial^2}{\partial x^2_i}$.
We consider the following continuum Dirichlet problem:
\begin{equation}\label{eqa:continuum}
 	\begin{cases}
 	Lu(x) = f(x)& x\in D\\
 	D^\alpha u(x)=0& \lvert\alpha\rvert\leq m-1,\,x\in \partial D. 
 	\end{cases}
 	\end{equation}
 	where $\alpha=(\alpha,\ldots,\alpha_d)$ is a multi-index with $\alpha_i$'s being non-negative integers, $|\alpha|:=\sum_{i=1}^d \alpha_i$, $D^\alpha$ is defined in~\eqref{eq:def_D^alpha}, $m=1$ if $L=-\Delta_c$ and $m=2$ in the other cases.
 
\subsection{Lower dimensional results} 	
 We first present the results in lower dimensions where we show that convergence takes place in the space of continuous functions. In this case we consider $D=(0,1)^d$. Also here, according to the behavior of $\kappa$ as $N\to\infty$ we have three different limits. To verify the convergence in the space of continuous functions we shall need to continuously interpolate the discrete model. In $d=1$ the linear interpolation gives a continuous process but for higher dimensions there might be many ways. We stick to the following natural way. We will need this interpolation in $d=2$ and $3$ when $\kappa\gg N^2$ or $\kappa\sim 2dN^2$.
 	We define the continuous interpolation $\{\Psi_N\}_{N\in\N}$ in the following fashion:
\begin{itemize} 
\item For $d=1$ and $t\in\overline D$
\begin{align}
\Psi_N(t)=\mathbf{c}_N(1)\left[\vr_{\floor{Nt}} + (Nt-\floor{Nt})(\vr_{\floor{Nt}+1}-\vr_{\floor{Nt}})\right].\label{eq:intd=1}
\end{align}
\item For $d=2$ and $t=(t_1,t_2)\in \overline D$
\begin{align}
\Psi_{N}(t)&=\mathbf{c}_N(2)\left[\vr_{\floor {Nt}}+\{Nt_i\}\left(\vr_{\floor {Nt}+e_i}-\vr_{\floor {Nt}}\right)\right. \nonumber\\
&+\left.\{Nt_j\}\left(\vr_{\floor {Nt}+e_i+e_j}-\vr_{\floor {Nt}+e_i}\right)\right] ,\,\quad\text{if}\,\{Nt_i\}\ge\{Nt_j\}\label{eq:intd=2}
\end{align}
where $i,\,j\in\{1,\,2\}$, $i\neq j$.
\item For $d=3$ and $t=(t_1,t_2,t_3)\in \overline D$
\begin{align}
\Psi_{N}(t)&=\mathbf{c}_N(3)\left[\vr_{\floor {Nt}}+\{Nt_i\}\left(\vr_{\floor {Nt}+e_i}-\vr_{\floor {Nt}}\right)\right.\nonumber \\
&+\{Nt_j\}\left(\vr_{\floor {Nt}+e_i+e_j}-\vr_{\floor {Nt}+e_i}\right)\nonumber\\
&+\left.\{Nt_k\}\left(\vr_{\floor {Nt}+e_i+e_j+e_k}-\vr_{\floor {Nt}+e_i+e_j}\right)\right],\quad\,\text{ if }\{Nt_i\}\ge\{Nt_j\}\ge\{Nt_k\} \label{eq:intd=3}
\end{align}
where $i,\,j,\,k\in \{1,\,2,\,3\}$ and pairwise different. Here $(e_i)_{i=1}^d$ denotes the standard basis for $\R^d$ and $c_N(d),\,d=1,\,2,\,3$, are scaling factors which are specified in the following result.
\end{itemize}

\begin{theorem}\label{thm:main2} We have the following convergence results.
\begin{enumerate}[ref=(\arabic*)]

\item\label{thm:MM2} $\kappa \gg N^2$. Let $1 \le d \le 3$. Define a continuously interpolated field $\Psi_N$ as in \eqref{eq:intd=1}, \eqref{eq:intd=2} and \eqref{eq:intd=3} with $$\mathbf{c}_N(d)= (2d)^{-1}\sqrt{\kappa}N^{\frac{d-4}{2}}.$$
Then we have, as $N\to\infty$, that the field $\Psi_N$ converges in 
distribution to $\Psi^{\Delta^2}$ in the space of continuous functions on $\overline D$, where $\Psi^{\Delta^2}$ is defined to be the centered continuous Gaussian process on $\overline D$ with covariance $G_D(\cdot,\,\cdot)$, the Green's function for the following biharmonic Dirichlet problem:
\begin{align}
\begin{cases}
\Delta_c^2 u(x) = f(x), & x\in D\\
D^\alpha u(x)=0, &\forall \,|\alpha|\le 1,\, x\in\partial D. \label{eq:mm_continuum}
\end{cases}
\end{align}

\item \label{thm:MIXED2} $\kappa \sim 2d N^2$. Let $1 \le d \le 3$.  Define a continuously interpolated field $\Psi_N$ as in \eqref{eq:intd=1}, \eqref{eq:intd=2} and \eqref{eq:intd=3} with $$\mathbf{c}_N(d)= (2d)^{-1}\sqrt{\kappa}N^{\frac{d-4}{2}}.$$ 
Define $\Psi^{-\Delta + \Delta^2}$ to be the continuous Gaussian process in $\overline D$ with covariance $G_D(\cdot,\,\cdot)$, where $G_D$ is the Green's function for the problem 
\begin{align*}
\begin{cases}
(-\Delta_c + \Delta_c^2) u(x) = f(x), & x\in D\\
D^\alpha u(x)=0, &\forall \,|\alpha|\le 1,\, x\in\partial D. \label{eq:mixed_continuum}
\end{cases}
\end{align*} 
Then $\Psi_N$ converges in distribution to the field $\Psi^{-\Delta + \Delta^2}$ in the space of continuous functions on $\overline D$.


\item \label{thm:GFF2} $\kappa \ll N^{2}$. Let $d=1$. Define the continuously interpolated field $\Psi_N$ as in \eqref{eq:intd=1} with $$\mathbf{c}_N(1)=(2d)^{-\frac12}N^{-\frac12}.$$
Then as $N\to\infty$, $\Psi_N$ converges in distribution to the Brownian bridge, $\Psi^{-\Delta}$, in the space of continuous functions on $\overline D$.
\end{enumerate}
\end{theorem}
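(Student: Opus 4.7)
The plan is to prove weak convergence in $C(\overline D)$ by the standard two-step procedure: (i) convergence of finite-dimensional distributions and (ii) tightness. Since $\Psi_N$ is a centered Gaussian field for every $N$, step (i) reduces to showing pointwise convergence of the covariance kernel
$$
K_N(s,t):=\E\bigl[\Psi_N(s)\Psi_N(t)\bigr],\qquad s,t\in\overline D,
$$
to the Green's function $G_D(s,t)$ of the continuum Dirichlet problem~\eqref{eqa:continuum} associated with $L=-\Delta_c$, $\Delta_c^2$, or $-\Delta_c+\Delta_c^2$ depending on the regime.

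For the covariance convergence I would use the characterisation~\eqref{eq:cov}: $G_{\Lambda_N}$ is the Green's function of the discrete operator $-\Delta+\kappa\Delta^2$ on $\Lambda_N$. Writing $\Psi_N(t)$ through the simplex-wise linear interpolation~\eqref{eq:intd=1}--\eqref{eq:intd=3}, one expresses $K_N(s,t)$ as a bounded linear combination of values of $\mathbf{c}_N(d)^2 G_{\Lambda_N}(x,y)$ for lattice points $x,y$ within a unit cube around $\floor{Ns}$ and $\floor{Nt}$. The choice of $\mathbf{c}_N(d)$ in the three cases \ref{thm:MM2}--\ref{thm:GFF2} is precisely the normalisation that turns $x\mapsto \mathbf{c}_N(d)^2 G_{\Lambda_N}(\floor{Ns},x)$ into a consistent finite-difference discretisation of the continuum problem~\eqref{eqa:continuum} for the appropriate $L$: when $\kappa\gg N^2$ the Bilaplacian part dominates and one recovers $L=\Delta_c^2$; when $\kappa\sim 2dN^2$ both terms survive, giving $L=-\Delta_c+\Delta_c^2$; and when $\kappa\ll N^2$ (with $d=1$) the gradient part dominates, leaving $L=-\Delta_c$. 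The approximation theorems of Subsection~\ref{main_ingredient}, extended from~\cite{thomee} to handle $N$-dependent coefficients, then imply
$$
\mathbf{c}_N(d)^2\, G_{\Lambda_N}(\floor{Ns},\floor{Nt})\ \longrightarrow\ G_D(s,t)
$$
uniformly on compact subsets of $\{(s,t)\in D\times D:\,s\ne t\}$, and the (bounded) interpolation weights pass through by continuity of $G_D$ up to the boundary in $d\le 3$.

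For tightness in $C(\overline D)$ I would invoke Kolmogorov's criterion. Because $\Psi_N$ is Gaussian, every moment of an increment is controlled by its variance, so it suffices to establish a uniform bound
$$
\E\bigl[(\Psi_N(t)-\Psi_N(s))^2\bigr]\ \le\ C\,|t-s|^{\eta},\qquad s,t\in\overline D,
$$
with $\eta>0$ independent of $N$; raising this to a sufficiently high power via the Gaussian moment identities produces an exponent exceeding $d$, which is all that Kolmogorov's criterion requires. The increment variance is itself a finite linear combination of $\mathbf{c}_N(d)^2 G_{\Lambda_N}$ values at $O(1)$-close lattice points, and one compares it to the continuum modulus of $G_D$, which in dimensions $d\le 3$ is Hölder continuous for each of the three operators $L$ by standard elliptic regularity (higher-order $L$ giving more smoothness). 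Feeding this modulus back through the approximation estimates of Subsection~\ref{main_ingredient} yields the desired uniform bound.

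The hardest step, I expect, is the critical regime \ref{thm:MIXED2} where $\kappa\sim 2dN^2$: the discrete operator does not degenerate to a single term, both $-\Delta$ and $\kappa\Delta^2$ contribute at the same order after rescaling, and the boundary layer created by the Bilaplacian part must be controlled together with the gradient term without absorbing either into the other as a perturbation. A secondary, purely technical, difficulty is the simplex-by-simplex interpolation in $d=2,3$ in~\eqref{eq:intd=2}--\eqref{eq:intd=3}: one must verify that the increment $\Psi_N(t)-\Psi_N(s)$ decomposes into a uniformly bounded number of discrete coordinate gradients of $\vr$, each controllable by the same Green's function Hölder estimate; once the estimates of Subsection~\ref{main_ingredient} are in place this step is routine.
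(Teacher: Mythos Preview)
Your overall architecture (finite-dimensional convergence plus Kolmogorov tightness) matches the paper's, but there is a genuine gap in how you propose to obtain the pointwise estimates. Theorem~\ref{approx_result} only yields an $L^2$-type bound $\|R_he_h\|_{h,\,grid}^2\le C(\cdots)$ for the discretisation error of the Dirichlet problem with \emph{smooth} data $f$. It does not give pointwise or uniform convergence of the rescaled discrete Green's function $\mathbf c_N(d)^2G_{\Lambda_N}(\floor{Ns},\floor{Nt})$, nor does it control the increment variance $\E[(\Psi_N(t)-\Psi_N(s))^2]$ at fixed lattice points. Both of your steps rely on exactly this kind of $L^\infty$ information, so ``feeding the continuum H\"older modulus back through the approximation estimates'' is not enough: the error term you would pick up is only small in $\ell^2$, not pointwise.

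The paper supplies the missing ingredient via the Brascamp--Lieb inequality (Lemma~\ref{DGF_bound0}): in regimes \ref{thm:MM2}--\ref{thm:MIXED2} one dominates the mixed model by the pure membrane model, and in regime \ref{thm:GFF2} by the discrete GFF, and then imports known \emph{pointwise} discrete estimates for those pure models (M\"uller--Schweiger for $d=2,3$, the random walk representation for $d=1$). This gives directly (a) the increment bound $\E_{\Lambda_N}[(\vr_{x+e_i}-\vr_x)^2]\le C\kappa^{-1}N^{3-d}$ (or the GFF analogue) needed for tightness, and (b) the uniform bound $\sup_{x}G_{\Lambda_N}(x,x)\le C\kappa^{-1}N^{4-d}$. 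For finite-dimensional convergence the paper then uses Theorem~\ref{approx_result} only in its natural $L^2$ role---to show $\sum N^{-2d}G^I_{1/N}(x,y)f(x)g(y)\to\iint G_Dfg$ for $f,g\in C_c^\infty$---and upgrades this weak convergence to uniform convergence of $G^I_{1/N}$ by combining the uniform boundedness from (b) with a subsequence/uniqueness argument. Finally, your guess about difficulty is slightly off: cases \ref{thm:MM2} and \ref{thm:MIXED2} are handled together via comparison with the membrane model; the delicate boundary-layer analysis you anticipate occurs in the proof of Theorem~\ref{approx_result}\ref{thm:gff_one} (the $-\Delta_c$ case), not in the critical regime.
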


\begin{remark}
When $\kappa_1=0$ and $\kappa_2=1$ in \eqref{eq:Ham_def} the $d=1$ case was first studied in~\cite{CarJDScaling}, where they showed that the limiting distribution is given by an integrated Brownian bridge (for a more precise definition see Theorem 1.2 of~\cite{CarJDScaling}). The higher dimensional case was studied in~\cite{mm_scaling}. It was shown in~\cite{mm_scaling} that for $d=2,\,3$ the discrete membrane model converges to a Gaussian process with continuous paths and the methods in that article can be seen to be valid in $d=1$ also. By uniqueness of the limit in $C[0,1]$ it follows that the limiting Gaussian process in $d=1$ for the case $\kappa\gg N^2$ (Theorem~\ref{thm:main2}~\ref{thm:MM2}) can be described using the integrated Brownian bridge, the limit matching that of~\cite{CarJDScaling}.
\end{remark}

\subsection{Higher dimensional results}	
 We  present now the results in higher dimensions where we show convergence in the space of distributions. In order to make our statements precise, we need to introduce three (negative ordered) Sobolev spaces denoted respectively as $\mathcal H^{-s}_{\Delta^2}(D)$, $\mathcal H^{-s}_{-\Delta+\Delta^2}(D)$ and $\mathcal H^{-s}_{-\Delta}(D)$\footnote{We shall use $\Delta$ in the subscript of the spaces and the norms instead of $\Delta_c$ to ease notation.}. We are going to recall some basic notations on Sobolev spaces and also some facts about the eigenvalues of the elliptic operators involved in our problem.
 
 \subsubsection{Basics of Sobolev spaces}
 Let us first describe the standard Sobolev space. Let $C_c^\infty(D)$ denote the space of 
infinitely differentiable functions $u: D\to \R$ with compact support inside 
$D$. For $\alpha= (\alpha_1, \,\ldots,\, \alpha_d)$ a multi-index define
\begin{equation}\label{eq:def_D^alpha}
D^\alpha u= \frac{\partial^{\alpha_1}}{\partial x_1^{\alpha_1}}\cdots \frac{\partial^{\alpha_d}}{\partial x_d^{\alpha_d}} u.
\end{equation}
Suppose $f, \,g\in L^1_{loc}(D)$. We say that $g$ is the $\alpha$-th weak partial derivative of 
$f$ (written $D^\alpha f= g$) if 
$$\int_{D} f D^\alpha u \De x= (-1)^{|\alpha|} \int_{D} g u \De x \quad\forall\, u \in C_c^\infty(D).$$
The Sobolev space $W^{k,p}$ is defined in the usual way as
$$W^{k,p}= \{ f\in L^1_{loc}(D) : \,D^\alpha f\in L^p(D), \, |\alpha|\le k\}.$$
Denote by $H^k(D):= W^{k,2}(D)$, $k=0,\,1,\,\ldots$, which is a Hilbert space with norm
$$\|f\|_{H^k(D)} = \left( \sum_{|\alpha|\le k}\int_{D} |D^\alpha f|^2\De x\right)^{1/2}.$$
It is true that if $a>b$ then $H^a(D)\subset H^b(D)$. Let us define another Hilbert space,
$$H^k_0(D):= \overline{ C_c^{\infty}(D)}^{\|\cdot\|_{H^k(D)}}$$
and let $H^{-k}(D)= [ H^k_0(D)]^*$ be its dual.

\subsubsection{Continuum membrane model}
We briefly give the definition of the Sobolev space $\mathcal H^{-s}_{\Delta^2}(D)$ and the continuum membrane model. For a more detailed discussion see \cite{mm_scaling}. By the spectral theorem for compact self-adjoint operators and elliptic regularity one can show that there exist smooth eigenfunctions $\{u_j\}_{j\in \N}$ of $\Delta_c^2$ corresponding to the eigenvalues $0<\lambda_1\le\lambda_2\le \cdots \to\infty$ such that $\{u_j\}_{j\in \N}$ is an orthonormal basis for $L^2(D)$. Now for any $s>0$ we define the following inner product on $C_c^\infty(D)$:
$$\la f\,,\,g\ra_{s,\,\Delta^2}:=\sum_{j\in \N}\lambda_j^{s/2}\la f\,,\,u_j\ra_{L^2}\la u_j\,,\,g\ra_{L^2}.$$
Then $\mathcal H^{s}_{\Delta^2, 0}(D)$ is defined to be the Hilbert space completion of $C_c^\infty(D)$ with respect to this inner product. We define $\mathcal H^{-s}_{\Delta^2}(D)$ to be its dual and the dual norm is denoted by $\| \cdot \|_{-s,\,\Delta^2}$. The following definition is from \citet[Proposition~3.9]{mm_scaling} and provides a description of the continuum membrane model $\Psi^{\Delta^2}$.

\begin{definition}\label{prop:series_rep_h}
 		Let $(\xi_j)_{j\in \N}$ be a collection of i.i.d. standard Gaussian random variables. Set
 		\[
 		\Psi^{\Delta^2}:=\sum_{j\in \N}\lambda_j^{-1/2}\xi_j u_j.
 		\]
 		Then $\Psi^{\Delta^2}\in \mathcal H^{-s}_{\Delta^2}(D)$ a.s. for all $s>({d-4})/2$ and is called the continuum membrane model.
 	\end{definition}
 	
\subsubsection{Continuum mixed model} We define the space $\mathcal H^{-s}_{-\Delta+\Delta^2}(D)$ analogously to $\mathcal H^{-s}_{\Delta^2}(D)$. One can find smooth eigenfunctions $\{v_j\}_{j\in\mathbb N}$ of $-\Delta_c+\Delta^2_c$ corresponding to eigenvalues $0<\mu_1\le \mu_2\le \cdots\to \infty$ such that $\{v_j\}_{j\in \mathbb N}$ is an orthonormal basis of $L^2(D)$. One can define, for $s>0$, the following inner product for functions from $C_c^\infty(D)$:
$$\la f, g\ra_{s,\,-\Delta + \Delta^2}:=\sum_{j\in \mathbb N} \mu_j^{s/2}\la f, v_j\ra_{L^2}\la v_j, g\ra_{L^2}.$$
Let $\mathcal H^{s}_{-\Delta+\Delta^2, 0}(D)$ be the completion of $C_c^\infty(D)$ with the above inner product and $\mathcal H^{-s}_{-\Delta+\Delta^2}(D)$ be its dual. The dual norm is denoted by $\|\cdot\|_{-s,\,-\Delta + \Delta^2}$. We describe the details on this space in Appendix~\ref{appendix:B}. The following definition is proved as Proposition~\ref{prop:mixed_series_rep_h} in Appendix~\ref{appendix:B}.

\begin{definition}
	Let $(\xi_j)_{j\in \N}$ be a collection of i.i.d. standard Gaussian random variables. Set
	\[
	\Psi^{-\Delta+\Delta^2}:=\sum_{j\in \N}\mu_j^{-1/2}\xi_j v_j.
	\]
	Then $\Psi^{-\Delta+\Delta^2}\in \mathcal H^{-s}_{-\Delta+\Delta^2}(D)$ a.s. for all $s>({d-4})/2$ and is called the continuum mixed model.
\end{definition}
 	
\subsubsection{Gaussian free field} 	
 Here also we briefly give the definition of the Sobolev space $\mathcal H^{-s}_{-\Delta}(D)$ and the Gaussian free field. For a detail discussion see \cite{mixed_scaling}. By the spectral theorem for compact self-adjoint operators and elliptic regularity we know that there exist smooth eigenfunctions $(w_j)_{j\in\N}$ of $-\Delta_c$ corresponding to the eigenvalues $0<\nu_1\le\nu_2\le\cdots \to\infty$ such that $(w_j)_{j\ge 1}$ is an orthonormal basis of $L^2(D)$. Now for any $s>0$ we define the following inner product on $C_c^\infty(D)$:
$$\langle f\,,\,g \rangle_{s,\,-\Delta}:=\sum_{j\in\N} \nu_j^s\langle f\,,\,w_j\rangle_{L^2}\langle w_j\,,\,g\rangle_{L^2} .$$ 
Then $\mathcal H^s_{-\Delta, 0}(D)$ can be defined to be the completion of $C_c^\infty(D)$ with respect to this inner product. We define $\mathcal H^{-s}_{-\Delta}(D)$ to be its dual and the dual norm is denoted by $\| \cdot \|_{-s,\,-\Delta}$. We give the definition of the Gaussian free field in the next Proposition.

\begin{definition}[{\citet[Proposition~10]{mixed_scaling}}]\label{prop:series_rep_gff}
	Let $(\xi_j)_{j\in \N}$ be a collection of i.i.d. standard Gaussian random variables. Set 
	\[
	\Psi^{-\Delta}:=\sum_{j\in \N}\nu_j^{-1/2}\xi_j w_j.
	\]
	Then $\Psi^{-\Delta}\in \mathcal H^{-s}_{-\Delta}(D)$ a.s. for all $s>{d}/2 -1$ and is called the Gaussian free field.
\end{definition}

\begin{remark}
    We define different spaces with respect to different eigenfunctions of the operators. It is not clear to us if these spaces coincide for a general domain. We are not aware of a result which gives the norm equivalence between the spaces $\mathcal H^s_{\Delta^2, 0}(D)$, $\mathcal H^s_{-\Delta+\Delta^2, 0}(D)$ and $\mathcal H^s_{-\Delta, 0}(D)$. In this article we are not pursuing this line of research; what is important for us are the specific norms that determine the limiting variance of the discrete fields.
\end{remark}

\begin{remark}
    Note that we have used the same notation for the fields both in higher as well as as in lower dimensions, although they do not live in the same spaces. The relation of the fields comes through the Dirichlet problem. For $f\in C_c^\infty(D)$, one can easily show that
    $$\E[(\Psi^{L}, f)^2]=\iint_{D\times D} G_L(x,y) f(x)f(y)\De x\De y$$
    where $\Psi^{L}$ is one of the three fields associated to the elliptic operator $L$ as in \eqref{def:L} and $G_L$ is the Green's function of the Dirichlet problem~\eqref{eqa:continuum}.
\end{remark}

 We are now ready to state our main results in the higher dimensional case.


\begin{theorem}\label{thm:main}
Assume that $D$ has smooth boundary. Depending on the behavior of $\kappa$ as $N\to\infty$ we have the following three convergence results. 
\begin{enumerate}[ref=(\arabic*)]

\item\label{thm:MM} $\kappa \gg N^2$. Let $d\ge 4$. Define $\Psi_N$ by 
\begin{align}(\Psi_N,\,f):=(2d)^{-1}\sqrt{\kappa}N^{-\frac{d+4}2}\sum_{x\in \frac1N\Lambda_N }\vr_{Nx}f(x) ,\quad f\in \mathcal H^s_{\Delta^2, 0}(D).\label{eq:mm_psi_N}
\end{align}
Then we have, as $N\to \infty$, that the field $\Psi_N$ converges in distribution to the continuum membrane model $\Psi^{\Delta^2}$ in the topology of $\mathcal H^{-s}_{\Delta^2}(D)$ for $s>s_d$, where 
\begin{equation}\label{eq:sd}
s_d:=\frac{d}{2} + 2\left(\ceil*{ \frac{1}{4}\left(\floor*{\frac{d}{2}}+1\right)} + \ceil*{ \frac{1}{4}\left(\floor*{\frac{d}{2}}+6\right)} -1\right).
\end{equation}

\item \label{thm:MIXED} $\kappa \sim 2d N^2$. Let $d\ge 4$. Define $\Psi_N$ by 
\begin{align}(\Psi_N,\,f):=(2d)^{-1}\sqrt{\kappa}N^{-\frac{d+4}2}\sum_{x\in \frac1N\Lambda_N }\vr_{Nx}f(x) ,\quad f\in \mathcal H^s_{-\Delta+\Delta^2, 0}(D).\label{eq:mixed_psi_N}
\end{align}
Then, as $N\to \infty$, the field $\Psi_N$ converges in distribution to $\Psi^{-\Delta+\Delta^2}$ in the topology of $\mathcal H^{-s}_{-\Delta+\Delta^2}(D)$ for $s>s_d$ where $s_d$ is as in~\eqref{eq:sd}.

\item \label{thm:GFF} $\kappa \ll N^{2}$. Let $d\ge 2$. Define $\Psi_N$ by 
\begin{align}(\Psi_N,\,f):=(2d)^{-\frac12}N^{-\frac{d+2}2}\sum_{x\in \frac1N\Lambda_N }\vr_{Nx}f(x) ,\quad f\in \mathcal H^s_{-\Delta, 0}(D).\label{eq:gff_psi_N}
\end{align}
Then, as $N\to \infty$, the field $\Psi_N$ converges in distribution to the Gaussian free field $\Psi^{-\Delta}$ in the topology of $\mathcal H^{-s}_{-\Delta}(D)$ for $s>{d}/2 + \floor{{d}/2}+2$.
\end{enumerate}
\end{theorem}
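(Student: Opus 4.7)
\textbf{Proof plan for Theorem~\ref{thm:main}.} Because both $\Psi_N$ and the candidate limit $\Psi^L$ are centered Gaussian fields taking values in a separable Hilbert space, the plan is to reduce the assertion to two ingredients: (a) convergence of the second moment $\E[(\Psi_N,f)^2]\to \E[(\Psi^L,f)^2]$ for every $f\in C_c^\infty(D)$, which for Gaussians and linear functionals yields convergence of all finite-dimensional distributions; and (b) tightness of $\{\Psi_N\}_{N\ge 1}$ in the relevant negative Sobolev topology. For (b), thanks to the spectral representation of the norm, I would show that, writing $\{u_j\}$, $\{v_j\}$ or $\{w_j\}$ for the eigenbasis in each case with eigenvalues $\lambda_j$, one has $\E\|\Psi_N\|_{-s,L}^2 \asymp \sum_{j}\lambda_j^{-s/2}\E[(\Psi_N,e_j)^2]$, and use Weyl's law ($\lambda_j\asymp j^{2m/d}$, $m=1$ or $2$) together with a uniform-in-$N$ bound on $\E[(\Psi_N,e_j)^2]$ obtained from the variance analysis in~(a), to obtain summability exactly for $s>s_d$.

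The core task is therefore~(a). Using~\eqref{eq:cov}, the variance of $(\Psi_N,f)$ may be written as
\begin{equation*}
\E[(\Psi_N,f)^2] \;=\; \mathbf{c}_N^2\,N^{-2d}\!\!\sum_{x,y\in\Lambda_N}\!\! G_{\Lambda_N}(x,y)\,f(x/N)\,f(y/N) \;=\; \mathbf{c}_N^2\,N^{-2d}\,\la u_N,\,g_N\ra_{\ell^2(\Lambda_N)},
\end{equation*}
where $g_N(z):=f(z/N)$ and $u_N:\Lambda_N\to\R$ solves the discrete problem $(-\Delta+\kappa\Delta^2)u_N=g_N$ on $\Lambda_N$ with vanishing double boundary. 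The plan is to compare $u_N$ with a rescaled continuum solution $u$ of~\eqref{eqa:continuum}. In regime~\ref{thm:MM}, the bilaplacian term dominates; under the ansatz $u_N(x)\approx (N^4/\kappa)\,u(x/N)$ one has formally $\kappa\Delta^2 u_N(x)\approx \Delta_c^2 u(x/N)$ and the chosen $\mathbf{c}_N=(2d)^{-1}\sqrt{\kappa}N^{-(d+4)/2}$ makes $\mathbf{c}_N^2 N^{-2d}\la u_N,g_N\ra$ approach $\int u\,f\,\mathrm d x = \iint G_{D}(x,y)f(x)f(y)\,\mathrm d x\,\mathrm d y$, which is the prescribed limit. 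The analogous ansatz in regime~\ref{thm:GFF} is $u_N(x)\approx N^2 u(x/N)$ with $u$ solving the continuum Dirichlet problem for $-\Delta_c$, and in regime~\ref{thm:MIXED} both terms must be retained and scale at the same order. In every case, the precise implementation of this heuristic is the key approximation theorem stated in Subsection~\ref{main_ingredient}, which I would invoke to quantify the closeness of $N^{-a}u_N(N\cdot)$ to the continuum solution $u$ in, say, $L^2(D)$, with an error going to $0$ uniformly over $f$ in a fixed Sobolev ball.

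The hard part is handling the fact that the discrete elliptic operator $-\Delta+\kappa\Delta^2$ has $N$-dependent coefficients, so that the classical Thomée-type bounds do not apply directly, and rescaling the operator to absorb $\kappa$ introduces boundary issues that must be controlled separately. For regimes~\ref{thm:MM} and~\ref{thm:MIXED}, the bilaplacian term sets the scale and the boundary truncation strategy of~\cite{mm_scaling,mixed_scaling} can be adapted with a careful book-keeping of the $N$-dependent constants that appear. For~\ref{thm:GFF}, this approach fails because the dominant operator is second-order yet the problem still requires four boundary adjustments from the $\kappa\Delta^2$ term; here the plan is to follow the referee's suggestion recalled in the introduction, decoupling the boundary correction from the interior discretization by multiplying the continuum solution by a smooth cut-off vanishing near $\partial D$ and estimating the discretization error and boundary defect separately. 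The Sobolev regularity needed to apply the resulting estimates to a test function $f$ is what fixes the threshold $s_d$ in~\eqref{eq:sd}; combining it with the Weyl asymptotics discussed above yields the tightness claim, and concludes the proof in all three regimes.
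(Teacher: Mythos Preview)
Your proposal is correct and follows essentially the same route as the paper: finite-dimensional convergence via variance computation reduced to comparing the discrete solution $H_N$ with the continuum solution $u$ through Theorem~\ref{approx_result}, and tightness via the spectral bound $\E\|\Psi_N\|_{-s}^2\le\sum_j\lambda_j^{-s/2}\E[(\Psi_N,e_j)^2]$ combined with Weyl's law and eigenfunction derivative bounds. One small clarification: the error in Theorem~\ref{approx_result} is not uniform over $f$ in a Sobolev ball but rather controlled by $M_k=\sum_{|\alpha|\le k}\sup_D|D^\alpha u|$ of the continuum solution, so when $f=e_j$ the bound on $\E[(\Psi_N,e_j)^2]$ grows polynomially in $\lambda_j$ (via the Sobolev--elliptic regularity estimates~\eqref{bound0:MM}--\eqref{bound0:GFF}), and it is precisely tracking this growth against $\lambda_j^{-s/2}$ that produces the threshold $s_d$.
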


\begin{remark}
Note that the convergence takes place in a larger Sobolev space than where the field is defined. The appearance of $s_d$ in~\eqref{eq:sd} is due to the tightness proof. We believe that sharp results on convergence, in particular on the index $s_d$, could be obtained with other methods. However we do not pursue optimality results in the present article. 
\end{remark}

\subsection{Main ingredients in the proofs}\label{main_ingredient}
We prove both Theorem~\ref{thm:main2} and Theorem~\ref{thm:main} by first showing finite dimensional convergence and secondly tightness. As the measures are Gaussian with mean zero, the finite dimensional convergence follows from the convergence of the covariance. However the behavior of the covariance of the model is not known explicitly. Therefore we use the expedient of finite difference schemes to achieve both goals. The key fact which allows us to employ PDE techniques is that the covariance satisfies the discrete boundary value problem \eqref{eq:cov}. For the proof of our main theorems we will compute in Theorem~\ref{approx_result} the magnitude of the error one commits in approximating the solution of the Dirichlet problem~\eqref{eqa:continuum} by its discrete counterpart.
In the present section we only state the error estimate leaving the proof for Section \ref{proof_approx_result}. Let $D$ be any bounded domain in $\mathbb{R}^d$ satisfying the uniform exterior ball condition (UEBC), which states that there exists $r>0$ such that for any $z\in\partial D$ there is a ball $B_r(c)$ of radius $r$ with center at some point $c$ satisfying $\overline{B_r(c)}\cap \overline D = \{z\}$. We mention here that any domain with $C^2$ boundary satisfies the UEBC. 

 Let $h>0$. We will call the points in $h\mathbb{Z}^d$ the grid points in $\mathbb{R}^d$. We consider $L_h$ to be a discrete approximation of $L$ given by 
\begin{align}\label{discrete_op}
L_hu=\left\lbrace 
\begin{array}{l l l}
(-\Delta_h+\rho_1(h)\Delta^2_h)u& \quad  \text{if $L=-\Delta_c$}\\
(-\rho_2(h)\Delta_h+\Delta^2_h)u& \quad  \text{if $L=\Delta_c^2$}\\
(-\Delta_h+ \rho_3(h)\Delta^2_h)u& \quad  \text{if $L=-\Delta_c +\Delta_c^2$}
\end{array} \right.
 \end{align}
 where $\Delta_h$ is defined by $$\Delta_hu(x):=\frac1{h^2}\sum_{i=1}^d (u(x+he_i)+u(x-he_i)-2u(x)),$$
 $u$ is any function on $h\mathbb{Z}^d$ (called a grid function) and $\rho_i(h)$ are functions of $h$ taking values in the positive real line such that 
 $$\lim_{h\to 0}\rho_i(h)=\begin{cases} 0 & i=1,\,2\\ 1& i=3\end{cases}.$$
 
 Let $D_h$ be the set of grid points in $\overline D$, i.e. $D_h=\overline D\cap h\mathbb{Z}^d$. For any grid point $x$ we define the points $x\pm he_i,\,x\pm h(e_i\pm e_j)$ with $1\le i,j\le d$ to be its neighbors. We say that $x$ is an interior grid point in $D_h$ if all its neighbors are in $D_h$. Let $R_h$ be the set of interior grid points in $D_h$ and $B_h:= D_h\setminus R_h $ be the set of grid points near the boundary. We divide $R_h$ further into $R^*_h$ and $B^*_h$, where $R^*_h$ is the set of $x$ in $R_h$ such that all its neighbors are in $R_h$ and $B^*_h$ is the set of remaining points in $R_h$. 
 Thus we have $$D_h=B_h\cup R_h=B_h\cup B^*_h\cup R^*_h.$$
Denote by $\mathcal{D}_h$ the set of grid functions vanishing outside $R_h$. 
 	For a grid function $f$ we define $R_hf\in\mathcal{D}_h$ by
 	\begin{equation}
\label{eq:R_h}
 	R_hf(x)=\begin{cases}
 	f(x) &  x\in R_h\\
 	0 & x\notin R_h
 	\end{cases}.
 	\end{equation}
Define for grid-functions vanishing outside a finite set
\begin{align*}
& \la u\,,\,v \ra_{h,\,grid} := h^d \sum_{x\in h\Z^d}u(x)v(x),\\
& \|u\|_{h,\,grid}:= \la u\,,\,u \ra_{h,\,grid}^{1/2}.
\end{align*}
We now define the finite difference analogue of the Dirichlet problem \eqref{eqa:continuum}. For given $h$, we look for a function $u_h(\cdot)$ defined on $D_h$ such that 
 	\begin{align}
 	L_hu_h(x)=f(x), \quad x\in R_h \label{eq:discrete}
 	\end{align} 
and
 	\begin{align}
 	u_h(x)=0 ,\quad x\in B_h. \label{eq:discrete boundary}
 	\end{align}
 The uniqueness of the solution of \eqref{eq:discrete} and \eqref{eq:discrete boundary} is shown in Lemma~\ref{fact:unique_discrete}.
 We are now ready to state the error estimate result which forms the core result of this article. 
 \begin{theorem}\label{approx_result}
 Depending on $L$ we have the following error bounds.
 \begin{enumerate}[ref=(\arabic*)]
 \item\label{thm:mm_one}$L=\Delta_c^2$. Let $u\in {C}^5(\overline{D})$ be the solution of the Dirichlet problem \eqref{eqa:continuum}. If $e_h:=u-u_h$ then we have for all sufficiently small $h$
 		$$\lVert R_he_h\rVert_{h,\,grid}^2\le C\left[M_5^2h^2 + M_2^2(\rho_2(h))^2 +  M_2^2 h\right].$$

\item\label{thm:mixed_one} $L=-\Delta_c + \Delta_c^2$.	Let $u\in {C}^5(\overline{D})$ be the solution of the Dirichlet problem~\eqref{eqa:continuum}. If $e_h:=u-u_h$ then we have for all sufficiently small $h$
\begin{align*}\lVert R_he_h\rVert_{h,\,grid}^2 \le C\left[M_5^2h^2 + M_4^2(\rho_3(h)-1)^2  + M_4^2 h^4 + M_2^2 h \right].\end{align*}

\item\label{thm:gff_one}$L= -\Delta_c$. Let $u\in {C}^4(\overline{D})$ be a solution of the Dirichlet problem~\eqref{eqa:continuum}. If $e_h:=u-u_h$ then for sufficiently small $h$ we have
	 $$\|R_he_h\|_{h,\,grid}^2 \le C \left[M_4^2 \delta^4 + M_2^2 \rho_1(h) \delta + M_1^2 \delta \right],$$
where $\delta:= \max \{ h, \sqrt{\rho_1(h)}\}$.	 
\end{enumerate} 
 In all the cases $M_k:=\sum_{\lvert\alpha\rvert\le k}\sup_{x\in D}\lvert D^\alpha u(x)\rvert$.	
 \end{theorem}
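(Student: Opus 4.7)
My plan is to run a discrete duality/energy argument, coupled with Taylor expansion for the consistency error and a separate treatment of the layer of grid points adjacent to $\partial D$. Setting $e_h := u - u_h$ on $D_h$ and $E_h := R_h e_h \in \mathcal{D}_h$, the identity $L_h u_h = f = Lu$ on $R_h$ yields
\[
L_h E_h = \tau_h + b_h \quad \text{on } R_h,
\]
where $\tau_h := L_h u - L u$ is the truncation error and $b_h$ collects the boundary correction coming from the fact that $R_h e_h$ differs from $e_h$ on $B_h$ (the continuum solution $u$ does not vanish exactly on the grid-boundary layer, but is small there by smoothness). The operators $-\Delta_h$ and $\Delta_h^2$ are symmetric and non-negative on $\mathcal{D}_h$, so for each of the three choices of $L_h$ a uniform-in-$h$ lower bound on the smallest eigenvalue of $L_h|_{\mathcal{D}_h}$ is available, and the auxiliary discrete problem $L_h w_h = E_h,\; w_h|_{B_h}=0$, admits a unique solution with $\|w_h\|_{h,\mathrm{grid}} \le C\|E_h\|_{h,\mathrm{grid}}$. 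The discrete Green's identity then gives
\[
\|E_h\|_{h,\mathrm{grid}}^2 = \langle E_h, L_h w_h \rangle_{h,\mathrm{grid}} = \langle L_h E_h, w_h \rangle_{h,\mathrm{grid}} \le \|\tau_h + b_h\|_{h,\mathrm{grid}}\,\|w_h\|_{h,\mathrm{grid}},
\]
reducing the theorem to estimating $\|\tau_h + b_h\|_{h,\mathrm{grid}}^2$.

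For the consistency analysis I would split $R_h = R_h^* \cup B_h^*$. On $R_h^*$ the full stencils of $\Delta_h$ and $\Delta_h^2$ lie inside $D_h$, and Taylor expansion with integral remainder yields $|\Delta_h u - \Delta_c u| = O(h^2 M_4)$ and, at the $C^5$-regularity assumed, $|\Delta_h^2 u - \Delta_c^2 u| = O(h M_5)$; combined with the $h$-dependent coefficients $\rho_i(h)$ these contributions produce the interior terms $M_5^2 h^2$, $M_2^2\rho_2(h)^2$, $M_4^2(\rho_3(h)-1)^2$, and $M_4^2 h^4$ appearing in \ref{thm:mm_one} and \ref{thm:mixed_one}. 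On $B_h^*$, where the biharmonic stencil reaches into $B_h$ and the integral-remainder trick fails, I would use only the crude pointwise bounds $|\Delta_h u| \le CM_2$ and $|\Delta_h^2 u| \le CM_4$; since the $h^d$-measure of $B_h^*$ (a tube of thickness $\sim h$ around $\partial D$) is $O(h)$, squaring and summing yields the $M_2^2 h$ term. The boundary correction $b_h$ is controlled analogously, using that $u|_{B_h} = O(h^2 M_2)$ in cases \ref{thm:mm_one} and \ref{thm:mixed_one} (where $u$ and its first derivatives vanish on $\partial D$). Assembling these bounds gives the estimates in \ref{thm:mm_one} and \ref{thm:mixed_one}.

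The main obstacle is case \ref{thm:gff_one}: the principal continuum operator is $-\Delta_c$, but $L_h = -\Delta_h + \rho_1(h)\Delta_h^2$ still carries a fourth-order discrete term with small-but-nonzero coefficient, and the Dirichlet data for the continuum problem now only force $u = 0$ (not $\partial_\nu u = 0$) on $\partial D$. Consequently $\Delta_h^2 u$ picks up $O(M_2)$ errors in the boundary layer that do not damp out, and the boundary-truncation of $L_h$ used in~\cite{mm_scaling,mixed_scaling,thomee} is no longer effective. Following the referee's suggestion I would instead decouple boundary and discretization errors via a smooth cut-off function $\chi_\delta: \overline{D} \to [0,1]$ that equals $1$ on $\{\mathrm{dist}(x,\partial D) \ge 2\delta\}$ and vanishes in a $\delta$-tube around $\partial D$, with $\delta := \max\{h, \sqrt{\rho_1(h)}\}$. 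Writing $u = \chi_\delta u + (1-\chi_\delta)u$ and applying the energy estimate to each piece, the stencils of $L_h$ acting on $\chi_\delta u$ live in the clean interior and contribute $M_4^2\delta^4$ via standard Taylor bounds, while on $(1-\chi_\delta)u$ the pointwise bounds $|u| \le M_1\delta$ and $|\nabla u|,|\nabla^2 u| \le M_2$ in the $\delta$-tube (of $h^d$-measure $O(\delta)$) make the $-\Delta_h$ part contribute $M_1^2\delta$ and the $\rho_1(h)\Delta_h^2$ part contribute $M_2^2\rho_1(h)\delta$. The choice $\delta = \max\{h,\sqrt{\rho_1(h)}\}$ is precisely what balances the discretization length $h$ against the intrinsic length $\sqrt{\rho_1(h)}$ at which $\rho_1(h)\Delta_h^2$ becomes comparable to $-\Delta_h$, producing the stated bound.
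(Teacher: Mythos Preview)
Your treatment of case~\ref{thm:gff_one} is essentially the paper's argument: the cutoff at scale $\delta=\max\{h,\sqrt{\rho_1(h)}\}$, the splitting $u=\chi_\delta u+(1-\chi_\delta)u$, and the energy bookkeeping in the $\delta$-tube are exactly what the paper does (with $g:=(1-\chi_\delta)u$ and an energy identity on $e_h-g$). One point you leave implicit but which is essential: to make the $\rho_1(h)\Delta_h^2$ contribution come out as $M_2^2\rho_1(h)\delta$ you must pair it via the quadratic form, i.e.\ use $\rho_1(h)\langle \Delta_h g,\Delta_h(e_h-g)\rangle$ rather than $\langle \rho_1(h)\Delta_h^2 g,\,e_h-g\rangle$, since the latter would bring in fourth derivatives of the cutoff (of size $\delta^{-4}$) and blow up.

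There is, however, a genuine gap in your argument for cases~\ref{thm:mm_one} and~\ref{thm:mixed_one}. The reduction ``$\|E_h\|_{h,\mathrm{grid}}^2\le \|\tau_h+b_h\|_{h,\mathrm{grid}}\|w_h\|_{h,\mathrm{grid}}$, so it suffices to bound $\|\tau_h+b_h\|_{h,\mathrm{grid}}$'' does not work, because the boundary correction $b_h$ is too large in the unweighted grid norm. For $x\in B_h^*$ the stencil of $\Delta_h^2$ reaches into $B_h$, and the correction picks up $h^{-4}$ times the values $u(y)=O(M_2 h^2)$ there (this is where you use that both $u$ and $Du$ vanish on $\partial D$). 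Hence $|b_h(x)|\asymp M_2 h^{-2}$ on $B_h^*$, and since $\#B_h^*\asymp h^{1-d}$ one gets
\[
\|b_h\|_{h,\mathrm{grid}}^2\;\asymp\; h^d\cdot h^{1-d}\cdot M_2^2 h^{-4}\;=\;M_2^2 h^{-3},
\]
which diverges. Your crude interior/boundary splitting with a plain Cauchy--Schwarz cannot produce the $M_2^2 h$ term.

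The paper fixes this not by duality but by replacing $L_h$ with the \emph{truncated} operator $L_{h,2}$, equal to $L_h$ on $R_h^*$ and to $h^2 L_h$ on $B_h^*$, and proving the stability estimate $\|v\|_{h,2}\le C\|L_{h,2}v\|_{h,\mathrm{grid}}$ for $v\in\mathcal D_h$ (Lemmas~\ref{thm:mm_4.2} and~\ref{thm:mixed_4.2}). The proof of that lemma passes through the characteristic polynomial bound $p(\theta)\ge c\sum_j(1-\cos\theta_j)^2$ to get $\|v\|_{h,2}^2\le C\langle L_h v,v\rangle$, and then controls $\langle L_h v,v\rangle$ by $\|L_{h,2}v\|_{h,\mathrm{grid}}\cdot|||v|||_{h,2}$, where the weighted norm $|||\cdot|||_{h,2}$ carries an extra $h^{-2}$ on $B_h^*$ and is in turn bounded by $\|v\|_{h,2}$ (Lemma~\ref{lem:3.4}). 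The $h^2$ rescaling in $L_{h,2}$ is precisely what turns the $M_2 h^{-2}$ boundary correction into $M_2$, after which squaring and summing over $B_h^*$ yields the desired $M_2^2 h$. This weighted-norm/truncated-operator machinery (adapted from Thom\'ee) is the missing ingredient in your sketch for the first two cases.
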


\subsection{Open problems and discussions}\label{open_problems}
In this subsection we list some open problems.

\begin{enumerate}


\item Let $\eps\ge 0$ and consider the following pinned measure on $\R^{V_N}$, with $V_N$ being a box of side length $N$:
$$\prob_{\eps,N}=\frac{1}{Z_{\eps, N}}\mathrm{e}^{-H(\phi)}\prod_{x\in{V_N}}(\eps \delta_0(\mathrm{d}\phi_x)+\mathrm{d}\phi_{x})\prod_{x\in\mathbb{Z}^{d}\setminus V_N}\delta_{0}(\mathrm{d}\phi_{x})$$
Here $H(\phi)$ is as in \eqref{eq:ham:mixed}. Let $F(\eps)$ be the free energy of the above system, namely,
$$F(\eps)= \lim_{N\to\infty} \frac{1}{N} \log\frac{Z_{\eps, N}}{Z_{0,N}}.$$ 
If $F(\eps)>0$ then the above pinned measure is said to be {\it localized}, otherwise it is {\it delocalized}. We call $\eps_c$ the supremum of all delocalized $\eps$. It would be interesting to see if the above model with $\kappa_1$ and $\kappa_2$ depending on $N$ shows a phase transition with respect to localization. The case when $\kappa_1$ and $\kappa_2$ do not depend on $N$ was studied in \cite{CarBor}. The case of $\kappa_1=0$ and $d=1$ was extensively studied in the literature, see~ \cite{CaravennaDeuschel_pin, CarJDScaling}. 

\item Extremes of interface models are also to be investigated. From Theorem~\ref{thm:main2} it follows that the maximum of the $(\nabla+\Delta)-$model with varying coefficients converges after appropriate rescaling to the supremum of a Gaussian process. We summarise the cases in which we are able to identify the limiting rescaled maximum:
\begin{itemize}
\item $\kappa\ll N^{2}$ and $d=1$;
\item $\kappa\sim 2d N^2$ and $d=1,\,2,\,3$;
\item $\kappa\gg N^2$ and $d=1,\,2,\,3$;
\end{itemize}
All the remaining cases are not known yet and it would be interesting to see if the existing methods can be pushed to cover other dimensions. The challenge in this problem arises because the behavior of the Green's function is hard to determine. A similar situation was recently handled by \cite{schweiger:2019} to determine the extremes of the the four-dimensional membrane model. He found out estimates for the Green's function and applied the methods of \cite{ding:roy:ofer} to show that the limit of the maximum is a shifted Gumbel distribution. 

\end{enumerate}

\section{\bf Proof of Theorem~\ref{thm:main}}\label{sec:main_proofs}
We now give the proof of each of the three parts of Theorem~\ref{thm:main}.

%

\subsection{Proof of finite dimensional convergence}
We first show that for $f\in C_c^\infty(D)$ 
\begin{equation}
\label{eq:mm_toshowfdm}
( \Psi_N, f)\overset{d}\longrightarrow \begin{cases}
(\Psi^{\Delta^2}\,,\,f) & \kappa \gg N^2\\
(\Psi^{-\Delta+\Delta^2}, f) & \kappa \sim 2d N^2\\
(\Psi^{-\Delta}, f) & \kappa \ll N^{2}
\end{cases}.\end{equation}
We begin by noting that $(\Psi_N, f)$ is a centered Gaussian random variable. Hence to show the above convergence it is enough to show that $\var(\Psi_N, f)$ converges to the variance of the Gaussian on the right hand side of~\eqref{eq:mm_toshowfdm}.
We denote $G_{\frac1N}(x,y):=\E_{\Lambda_N}[ \vr_{Nx}\vr_{Ny}]$. Note that by ~\eqref{eq:cov}, we have for all $x\in \frac1N\Lambda_N$,
\begin{equation}
\kappa\gg N^2:\qquad \begin{cases}
\left(-\frac{2dN^2}{\kappa}\Delta_{\frac1N}+\Delta_{\frac1N}^2\right) G_{\frac1N}(x,y) =\frac{4d^2N^4}{\kappa}\delta_{x}(y), & y\in \frac1N\Lambda_N \\
G_{\frac1N}(x,y) = 0  & y\notin \frac1N\Lambda_N\label{eq:mm_G_N}
\end{cases}
\end{equation}

\begin{align}
\kappa\sim 2d N^2: \qquad \begin{cases}
		\left(-\Delta_{\frac1N}+\frac{\kappa}{2dN^2}\Delta_{\frac1N}^2\right) G_{\frac1N}(x,y) =2dN^2\delta_{x}(y), & y\in \frac1N\Lambda_N \\
		G_{\frac1N}(x,y) = 0 & y\notin \frac1N\Lambda_N\label{eq:mixed_G_N}
		\end{cases}
\end{align}

\begin{equation}
\kappa\ll N^{2}:\qquad		\begin{cases}
		\left(-\Delta_{\frac1N}+\frac{\kappa}{2dN^2}\Delta_{\frac1N}^2\right) G_{\frac1N}(x,y) =2dN^2\delta_{x}(y), & y\in \frac1N\Lambda_N \\
		G_{\frac1N}(x,y) = 0 & y\notin \frac1N\Lambda_N.
		\end{cases}\label{eq:gff_G_N}
		\end{equation}

Now considering all the three cases we can rewrite the variance as 
$$\var[(\Psi_N, f)]=N^{-d}\sum_{x\in \frac1N\Lambda_N} H_N(x) f(x)$$
where for $x\in \frac1N D_N$,
$$H_N(x)=\begin{cases}
(2d)^{-2}\kappa N^{-4}\sum_{y\in \frac1N\Lambda_N}G_{\frac1N}(x,y) f(y)& \kappa\gg N^2\\
(2d)^{-2}\kappa N^{-4}\sum_{y\in \frac1N\Lambda_N}G_{\frac1N}(x,y) f(y) & \kappa\sim 2d N^2\\
(2d)^{-1}N^{-2}\sum_{y\in \frac1N\Lambda_N}G_{\frac1N}(x,y) f(y) & \kappa\ll N^{2}.
\end{cases}$$
It is immediate from \eqref{eq:mm_G_N}, \eqref{eq:mixed_G_N}, \eqref{eq:gff_G_N} that $H_N$ is the solution of the following Dirichlet problem:
\begin{align}
\kappa\gg N^2: \qquad \begin{cases}
 \left(-\frac{2dN^2}{\kappa}\Delta_{\frac1N}+\Delta_{\frac1N}^2\right) H_N(x) = f(x), & x\in \frac1N\Lambda_N\\
H_N(x)= 0, & x\notin \frac1N\Lambda_N\label{mm_H_N}
\end{cases}
\end{align}

\begin{align}
\kappa\sim 2dN^2: \qquad \begin{cases}
 \left(-\Delta_{\frac1N}+ \frac{\kappa}{2dN^2}\Delta_{\frac1N}^2\right) H_N(x) = f(x) & x\in \frac1N\Lambda_N\\
H_N(x)= 0 & x\notin \frac1N\Lambda_N\label{mixed_H_N}
\end{cases}
\end{align}

\begin{align}
\kappa\ll N^{2}: \qquad \begin{cases}
		\left(-\Delta_{\frac1N}+\frac{\kappa}{2dN^2}\Delta_{\frac1N}^2\right)H_N(x) = f(x), & x\in \frac1N\Lambda_N\\
		H_N(x)= 0, & x\notin \frac1N\Lambda_N.\label{gff_H_N}
		\end{cases}
\end{align}
Observe that we get the discrete Dirichlet problem involving the operator $L_h$ defined in \eqref{discrete_op} with $h=1/N$ and $$\rho_1(h):=\kappa h^2/2d,\quad \rho_2(h):= 2d/\kappa h^2, \quad \rho_3(h):=\kappa h^2/2d.$$
We now recall the continuum Dirichlet problem~\eqref{eqa:continuum} with the elliptic operator $L$ as in~\eqref{def:L}:
\begin{equation*}
\begin{cases}
 	Lu(x) = f(x)& x\in D\\
 	D^\alpha u(x)=0& \lvert\alpha\rvert\leq m-1,\,x\in \partial D.
 	\end{cases}
 	\end{equation*}
 	where $m=1$ if $L=-\Delta_c$ and $m=2$ in the other two cases. We set $L:=\Delta_c^2$ when $\kappa\gg N^2$, $L:=-\Delta_c$ when $\kappa\ll N^{2}$ and $L:=-\Delta_c+\Delta_c^2$ when $\kappa\sim 2dN^2$.
 Define $e_N(x)= H_N(x)-u(x)$ for $x\in \frac1N D_N$. Then from Theorem~\ref{approx_result} we have
\begin{equation}\label{eq:mm_thomee}
N^{-d}\sum_{x\in \frac1N\Lambda_N} e_N(x)^2\le
\begin{cases}
 C\left( \frac1{N^2} +\frac{4d^2N^4}{\kappa^2} + \frac1N\right) & \kappa\gg N^2\\
 C\left(\frac1N + \left(\frac{\kappa}{2dN^2}-1 \right)^2\right) & \kappa\sim 2d N^2\\
 C \max\{\frac{1}{N},\frac{\sqrt{\kappa}}{\sqrt{2d}N}\}  & \kappa\ll N^{2}.
 \end{cases}.
\end{equation}
Hence we get that
\begin{equation}\label{eq:varexp}
\var[(\Psi_N, f)]= \sum_{x\in \frac1N\Lambda_N} e_N(x) f(x)N^{-d} + \sum_{x\in \frac1N\Lambda_N} u(x) f(x) N^{-d}.
\end{equation}
Note that by Cauchy-Schwarz inequality and ~\eqref{eq:mm_thomee} the first term goes to zero as $N\to \infty$. The second term converges to
\begin{equation}\label{eq:mm_limit}
\sum_{x\in \frac1N\Lambda_N} u(x) f(x) N^{-d}\to_{N\to \infty} \int_{D} u(x) f(x)\De x.
\end{equation}
Notice that by integration by parts we have 
$$\int_{D} u(x) f(x)\De x=
\begin{cases}
\|u\|_{2,\,\Delta^2}^2=\|f\|_{-2,\,\Delta^2}^2 & L= \Delta_c^2\\
\|u\|^2_{2,\,-\Delta + \Delta^2} = \|f\|_{-2,\,-\Delta + \Delta^2}^2 &  L= -\Delta_c+\Delta_c^2\\
\|u\|_{1, -\Delta}^2=\|f\|_{-1, -\Delta}^2 &  L=-\Delta_c.
\end{cases}
$$
On the other hand from the definition it follows that
\begin{align*}
\var[(\Psi^{\Delta^2}\,,\,f)]&=\sum_{j\in \N}\lambda_j^{-1} \la u_j\,,\,f\ra_{L^2}^2=\|f\|_{-2,\,\Delta^2}^2\\
\var[(\Psi^{-\Delta+\Delta^2}\,,\,f)]&=\sum_{j\in \N}\mu_j^{-1} \la v_j\,,\,f\ra_{L^2}^2=\|f\|_{-2,\,-\Delta + \Delta^2}^2\\
	\var[(\Psi^{\Delta}\,,\,f)]&=\sum_{j\in \N}\nu_j^{-1} \la w_j\,,\,f\ra_{L^2}^2=\|f\|_{-1,-\Delta}^2.
\end{align*}
Consequently we obtain~\eqref{eq:mm_toshowfdm}.

\subsection{Tightness}

To show tightness we shall need the following bounds on the eigenfunctions $(u_j)_{j\in \mathbb N}$, $(v_j)_{j\in \mathbb N}$ and $(w_j)_{j\in \mathbb N}$ of $\Delta_c^2$, $-\Delta_c+\Delta_c^2$ and $-\Delta_c$ respectively. They can obtained from the general Sobolev inequality (\citet[Chapter~5, Theorem~6 (ii)]{Evans}) and a repeated application of \citet[Corollary~2.21]{GGS}. 

\begin{lemma} Let $$l_k:=\ceil*{ \frac{1}{4}\left(\floor*{\frac d2}+k+1\right)},\quad k\ge 0. $$ 
	\begin{enumerate}
	\item For the eigenfunctions $(u_j)_{j\in \mathbb N}$ of $\Delta_c^2$ in Problem~\eqref{eqa:continuum} there exists a constant $C>0$ such that for $k\ge 0$
	\begin{equation}\label{bound0:MM}
	\sum_{\lvert\alpha\rvert\le k}\sup_{x\in D}\lvert D^\alpha u_j(x)\rvert \le C\lambda_j^{l_k}.
	\end{equation}
	
	\item For the eigenfunctions  $(v_j)_{j\in \mathbb N}$ of $-\Delta_c+\Delta_c^2$ in Problem~\eqref{eqa:continuum} there exists a constant $C>0$ such that for $k\ge 0$ 
	\begin{equation}\label{bound0:mixed}
	\sum_{\lvert\alpha\rvert\le k}\sup_{x\in D}\lvert D^\alpha v_j(x)\rvert \le C\mu_j^{l_k}.
	\end{equation}
	
	\item For the eigenfunctions  $(w_j)_{j\in \mathbb N}$ of $-\Delta_c$ in Problem~\eqref{eqa:continuum} there exists a constant $C>0$ such that for $k\ge 0$
	\begin{equation}\label{bound0:GFF}
	\sum_{\lvert\alpha\rvert\le k}\sup_{x\in D}\lvert D^\alpha w_j(x)\rvert \le C\nu_j^{\frac{\floor{\frac{d}2}+k+1}2}. 
	\end{equation}
\end{enumerate}
In each instance, the constant $C$ may depend on $k$.
\end{lemma}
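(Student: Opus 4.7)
The plan is to combine iterated elliptic regularity for the relevant operator with the Sobolev embedding $H^s(D)\hookrightarrow C^k(\overline D)$, valid for smooth bounded $D$ when $s>k+d/2$ (this is exactly Evans, Chapter 5, Theorem 6(ii)). The task then reduces to controlling the appropriate Sobolev norm of an eigenfunction by an explicit power of its eigenvalue.

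For the Laplacian case \eqref{bound0:GFF}, the function $w_j$ is a smooth solution of $-\Delta_c w_j=\nu_j w_j$ in $D$ with $w_j=0$ on $\partial D$, and the standard $L^2$-regularity estimate for the Dirichlet Laplacian reads $\|u\|_{H^{s+2}(D)}\le C\|{-\Delta_c u}\|_{H^{s}(D)}$ for every $s\ge 0$. Iterating this against $-\Delta_c w_j=\nu_j w_j$ yields $\|w_j\|_{H^{2m}(D)}\le C\nu_j^m$ for every positive integer $m$, using $\|w_j\|_{L^2}=1$. Hilbert space interpolation between consecutive even orders (equivalently, the spectral characterization of $H^s$ on eigenfunctions) then upgrades this to $\|w_j\|_{H^s(D)}\le C\nu_j^{s/2}$ for every real $s\ge 0$. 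Choosing $s=\lfloor d/2\rfloor+k+1$, which strictly exceeds $d/2+k$, and applying Sobolev embedding gives \eqref{bound0:GFF}.

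For the two fourth-order cases \eqref{bound0:MM} and \eqref{bound0:mixed}, the operators $\Delta_c^2$ and $-\Delta_c+\Delta_c^2$ are elliptic with clamped boundary conditions $D^\alpha u=0$ for $|\alpha|\le 1$. The cited Corollary 2.21 of Gazzola--Grunau--Sweers is the fourth-order analogue, giving $\|u\|_{H^{s+4}(D)}\le C\|Lu\|_{H^s(D)}$ (the lower order $-\Delta_c$ term in the mixed operator being absorbed without harming the eigenvalue dependence, since it is controlled by the full $H^{s+4}$ norm). Iterating $m$ times against the eigenvalue equation gives $\|u_j\|_{H^{4m}(D)}\le C\lambda_j^m$ and $\|v_j\|_{H^{4m}(D)}\le C\mu_j^m$. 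Taking $m=l_k=\lceil(\lfloor d/2\rfloor+k+1)/4\rceil$ ensures $4m\ge\lfloor d/2\rfloor+k+1>d/2+k$, so Sobolev embedding produces the desired $C^k$-control with the stated eigenvalue power.

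The only delicate point is the arithmetic of rounding: one must check that the smallest integer $m$ for which the Sobolev embedding applies reproduces exactly the exponent $l_k$ in \eqref{bound0:MM}--\eqref{bound0:mixed} and the half-integer exponent $(\lfloor d/2\rfloor+k+1)/2$ in \eqref{bound0:GFF}, the latter requiring the interpolation step rather than integer-order regularity alone. This is routine bookkeeping; once arranged, the argument is a direct combination of two standard tools and presents no substantive obstacle.
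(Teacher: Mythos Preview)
Your approach is essentially the same as the paper's: the paper states only that the bounds ``can be obtained from the general Sobolev inequality (\cite[Chapter~5, Theorem~6(ii)]{Evans}) and a repeated application of \cite[Corollary~2.21]{GGS},'' i.e., exactly the combination of Sobolev embedding with iterated elliptic regularity that you spell out. Your added remarks---interpolating between even Sobolev orders to hit the half-integer exponent in~\eqref{bound0:GFF}, and absorbing the lower-order $-\Delta_c$ term for the mixed operator---are correct details that the paper leaves implicit.
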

We can now begin to show tightness.\newline
{\bf Case 1: $\kappa\gg N^2$.}
    Our target is to show that the sequence $(\Psi_N)_{N\in \mathbb{N}}$ is tight in $\mathcal H^{-s}_{\Delta^2}(D)$ for all $s>s_d$. It is enough to show that
\begin{equation}\label{mm_limsup_psi}
\limsup_{N\to \infty}\E_{\Lambda_N}[\|\Psi_N\|_{-s,\,\Delta^2}^2]<\infty \quad \forall \, s>s_d.
\end{equation}
The tightness of $(\Psi_N)_{N\in \mathbb{N}}$ would then follow immediately from \eqref{mm_limsup_psi} and the fact that, for $0\le s_1<s_2$, $\mathcal H^{-s_1}_{\Delta^2}(D)$ is compactly embedded in $\mathcal H^{-s_2}_{\Delta^2}(D)$ (for a proof of this fact see \citet[Theorem~3.15]{mm_scaling}).

From the definition of dual norm it is immediate that we have
$$\E_{\Lambda_N}\left[\|\Psi_N\|_{-s,\,\Delta^2}^2\right] \le \sum_{j\in \N}\lambda_j^{-s/2}\E_{\Lambda_N}[(\Psi_N\,,\,u_j)^2].$$
Note that $u=\lambda_j^{-1}u_j$ is the unique solution of \eqref{eqa:continuum} with $L=\Delta_c^2$ for $f:=u_j$. Define $e_{N,j}$ to be the error between the solution of the discrete Dirichlet problem~\eqref{mm_H_N} and the continuum one~\eqref{eqa:continuum} with input datum $f:=u_j$. Now as in~\eqref{eq:varexp} we have
	\begin{align}
	\E_{\Lambda_N}[(\Psi_N\,,\,u_j)^2]&= \sum_{x\in \frac1N\Lambda_N} e_{N,j}(x) u_j(x)N^{-d} + \sum_{x\in \frac1N\Lambda_N} \lambda_j^{-1}u_j(x) u_j(x) N^{-d}\nonumber\\
	& \le C\sup_{x\in D}|u_j(x)| \left({N^{-d} \sum_{x\in \frac1N\Lambda_N} e_{N,j}(x)^2}\right)^{1/2}+ C\lambda_j^{-1}\left(\sup_{x\in D}|u_j(x)|\right)^2.\label{eq:varbound}
	\end{align}
	Using Theorem \ref{approx_result} \ref{thm:mm_one} along with the bounds~\eqref{bound0:MM} we obtain
	\begin{align*}
	\E_{\Lambda_N}[(\Psi_N\,,\,u_j)^2]&\le C \lambda_j^{l_0} \left[\lambda_j^{2l_5-2}N^{-2} + \lambda_j^{2l_2-2} 4d^2N^4\kappa^{-2}+\lambda_j^{2l_2-2} N^{-1}\right]^{\frac12} + C\lambda_j^{2l_0-1}\\
	&\le C \lambda_j^{l_0 + l_5-1}.
	\end{align*}
	Therefore we have 
	\begin{align*}
	\E_{\Lambda_N}\left[\|\Psi_N\|_{-s,\,\Delta^2}^2\right]\le C \sum_{j\in \N}\lambda_j^{-\frac s2}\lambda_j^{l_0 + l_5-1}.
	\end{align*}
	Thus 
	$$\limsup_{N\to \infty}\E_{\Lambda_N}[\|\Psi_N\|_{-s,\,\Delta^2}^2]<\infty \qquad \text{ if } \qquad \sum_{j\in \N}\lambda_j^{-\frac s2+l_0 + l_5-1}<\infty.$$ Now using $\lambda_j\sim c(d) j^{4/d}$ (see Proposition~3.8 of \cite{mm_scaling}) we obtain that $\sum_{j\in \N}\lambda_j^{-\frac s2+l_0 + l_5-1}$ is finite whenever $s>s_d$. Thus we have proved \eqref{mm_limsup_psi}. 

{\bf Case 2: $\kappa\sim 2d N^2$.} Due to the compact embedding of the spaces $\mathcal H^{-s}_{-\Delta+\Delta^2}(D)$, to show that the sequence $(\Psi_N)_{N\in \mathbb{N}}$ is tight in $\mathcal H^{-s}_{-\Delta+\Delta^2}(D)$ for all $s>s_d$, it is enough to show that
\begin{equation}\label{mixed_limsup_psi}
\limsup_{N\to \infty}\E_{\Lambda_N}[\|\Psi_N\|_{-s,\,-\Delta + \Delta^2}^2]<\infty \quad \forall \, s>s_d.
\end{equation}
As in the previous case, by definition of dual norm we have 
$$\E_{\Lambda_N}\left[\|\psi_N\|_{-s, \,-\Delta + \Delta^2}^2\right] \le \sum_{j\in \N}\mu_j^{-s/2}\E_{\Lambda_N}[(\psi_N\,,\,v_j)^2].$$
Note that $u=\mu_j^{-1}v_j$ is the unique solution of \eqref{eqa:continuum} with $L=-\Delta_c+\Delta_c^2$ for $f:=u_j$. Define $e_{N,j}$ to be the error between the solution of the discrete Dirichlet problem~\eqref{mixed_H_N} and the continuum one~\eqref{eqa:continuum} with $f:=v_j$. Now as in~\eqref{eq:varbound} we have
$$\E_{\Lambda_N}[(\Psi_N\,,\,v_j)^2]\le C\sup_{x\in D}|v_j(x)| \left({N^{-d} \sum_{x\in \frac1N\Lambda_N} e_{N,j}(x)^2}\right)^{1/2}+ C\mu_j^{-1}\left(\sup_{x\in D}|v_j(x)|\right)^2.$$
Using Theorem~\ref{approx_result}~\ref{thm:mixed_one} along with the bounds~\eqref{bound0:mixed} we obtain
	\begin{align*}
	\E_{\Lambda_N}[(\Psi_N\,,\,v_j)^2]&\le C \mu_j^{l_0} \left[\mu_j^{2l_5-2}N^{-2} + \mu_j^{2l_5-2}\left(\frac{\kappa}{2dN^2}-1\right)^2 +\mu_j^{2l_2-2} N^{-1}\right]^{\frac12} + C\mu_j^{2l_0-1}\\
	&\le C \mu_j^{l_0 + l_5-1}.
	\end{align*}
	Therefore we have 
	\begin{align*}
	\E_{\Lambda_N}\left[\|\Psi_N\|_{-s, \,-\Delta + \Delta^2}^2\right]\le C \sum_{j\in \N}\mu_j^{-\frac s2}\mu_j^{l_0 + l_5-1}.
	\end{align*}
	Thus 
	$$\limsup_{N\to \infty}\E_{\Lambda_N}[\|\Psi_N\|_{-s, \,-\Delta + \Delta^2}^2]<\infty \qquad \text{ if } \qquad \sum_{j\in \N}\mu_j^{-\frac s2+l_0 + l_5-1}<\infty.$$ 
	From Proposition \ref{prop:mixed_Weyl} we obtain that $\sum_{j\in \N}\mu_j^{-\frac s2+l_0 + l_5-1}<\infty$ whenever $s>s_d$. Thus we have proved \eqref{mixed_limsup_psi}.	
	
{\bf Case 3: $\kappa \ll N^{2}$.} The arguments are similar to the previous two cases and hence we just indicate the required bounds. To show tightness in $\mathcal H^{-s}_{-\Delta}(D)$ it is enough to show
	\begin{equation}\label{gff_limsup_psi}
	\limsup_{N\to \infty}\E_{\Lambda_N}[\|\Psi_N\|_{-s,\,-\Delta}^2]\le \sum_{j\in \N}\nu_j^{-s}\E_{\Lambda_N}[(\Psi_N\,,\,w_j)^2] <\infty \quad \forall \, s>{d}/2 + \floor{{d}/2}+2.
	\end{equation}
Setting $e_{N,j}$ to be the error between the solution of the discrete Dirichlet problem~\eqref{gff_H_N} and the continuum one \eqref{eqa:continuum} with $f:=w_j$ we obtain
$$
	\E_{\Lambda_N}[(\Psi_N\,,\,w_j)^2]\le C\sup_{x\in D}|w_j(x)| \left({N^{-d} \sum_{x\in \frac1N\Lambda_N} e_{N,j}(x)^2}\right)^{1/2}+ C\nu_j^{-1}\left(\sup_{x\in D}|w_j(x)|\right)^2.
$$
	Using Theorem~\ref{approx_result}~\ref{thm:gff_one} along with the bounds~\eqref{bound0:GFF} we can conclude the following upper bound for $\E_{\Lambda_N}[(\Psi_N\,,\,w_j)^2]$:
	\begin{align*}
	&C \sup_{x\in D}|w_j(x)| \left[ \left(\nu_j^{-1}\nu_j^{\frac{\floor{\frac{d}2}+5}2}\right)^2 \delta^4 + \left(\nu_j^{-1}\nu_j^{\frac{\floor{\frac{d}2}+3}2}
	\right)^2 \frac{\delta\kappa}{2dN^2} + \left(\nu_j^{-1}\nu_j^{\frac{\floor{\frac{d}2}+2}2}\right)^2 \delta\right]^{\frac12}+C\nu_j^{-1}\left(\sup_{x\in D}|w_j(x)|\right)^2 ,
	\end{align*}
	where $\delta = \max\{\frac{1}{N},\frac{\sqrt{\kappa}}{\sqrt{2d}N}\}$. Now a consequence of the above and \eqref{bound0:GFF} is that
	\begin{align}\label{eq:second_equation}
	\E_{\Lambda_N}[(\Psi_N\,,\,w_j)^2] \le C\nu_j^{\floor{\frac{d}2}+2}.
	\end{align}
	Therefore we have 
	\begin{align*}
	\E_{\Lambda_N}\left[\|\Psi_N\|_{-s,\,-\Delta}^2\right]\le C \sum_{j\in \N}\nu_j^{-s}\nu_j^{\floor{\frac{d}2}+2}.
	\end{align*}
	Thus 
	$$\limsup_{N\to \infty}\E_{\Lambda_N}[\|\Psi_N\|_{-s,\,-\Delta}^2]<\infty \qquad \text{ if } \qquad\sum_{j\in \N}\nu_j^{-s+\floor{\frac{d}2}+2}<\infty.$$ But $\nu_j\sim Cj^{\frac2d}$ and $\sum_{j\in \N}j^{\frac2d(-s+\floor{\frac{d}2}+2)}<\infty$ whenever $s>{d}/2 + \floor{{d}/2}+2$. Thus we have proved \eqref{gff_limsup_psi}.
	
For all the cases we now have the tightness and the convergence of $(\Psi_N, f)$ for all $f\in C_c^\infty(D)$. A standard uniqueness argument completes the proof of Theorem~\ref{thm:main}, using the fact that $C_c^\infty(D)$ is dense in $\mathcal H_{\Delta^2,0}^s(D)$, $\mathcal H_{-\Delta+\Delta^2, 0}^s(D)$ and $\mathcal H_{-\Delta, 0}^s(D)$ respectively.


\section{\bf Proof of Theorem~\ref{thm:main2}}\label{sec:main2_proofs}
In this section we prove Theorem~\ref{thm:main2} by showing finite dimensional convergence and tightness. The proof is similar to the proofs of the lower dimensional results in \cite{mm_scaling} and \cite{mixed_scaling} and hence we shall only state the important bounds needed for the proof. 
One can show tightness of the sequence using Theorem 14.9 of \cite{kallenberg:foundations} (see also Theorem 2.5 of \cite{mm_scaling}). To use this result one mainly needs bounds on the increments of the following type: $$\E_{\Lambda_N}\left[ \left| \Psi_N(t)- \Psi_N(s)\right|^2\right]\le C \|t-s\|^{1+b}, \quad t,s \in \overline{D},\,b\ge 0.$$
Such bounds can be obtained using the Brascamp-Lieb inequality and the following Lemma which is proved by using the estimates for the membrane model and the discrete Gaussian free field.
\begin{lemma}~\label{DGF_bound0}
Let $\prob_{\Lambda_N}^{MM}$ and $\prob_{\Lambda_N}^{GFF}$ denote respectively, the law of the membrane model and the discrete Gaussian free field on $\Lambda_N$ with zero boundary conditions outside $\Lambda_N$.
\begin{enumerate}[leftmargin=*,label=(\Roman*),ref=(\Roman*)]
\item\label{DGF_bound}Let $\kappa\gg N^2$ or $\kappa \sim 2dN^2$ and $1\le d\le 3$. Then for all $x\in\Z^d$
\begin{enumerate}[label=(\arabic*),ref=(\arabic*)]
	\item\label{1.DGF_bound} 
		$$G_{\Lambda_N}(x,x)\le \kappa^{-1}\E_{\Lambda_N}^{MM}(\vr_x^2) \le C\kappa^{-1}N^{4-d}.$$
	\item\label{2.DGF_bound0}  
$$\E_{\Lambda_N}\left[\left( \vr_{x+e_i}-\vr_x\right)^2 \right] \le \kappa^{-1}\E^{MM}_{\Lambda_N}\left[\left(\vr_{x+e_i} - \vr_x\right)^2\right]\le \left\lbrace 
\begin{array}{l l l}
C\kappa^{-1}N &  d=1\\
C\kappa^{-1}\log N &  d=2\\
C \kappa^{-1}&  d=3
\end{array} .\right.$$
\end{enumerate}

\item Let $\kappa\ll N^{2}$ and $d=1$. Then for all $x\in\Z^d$
\begin{enumerate}[label=(\arabic*),ref=(\arabic*)]
	\item 
	$$G_{\Lambda_N}(x,\,x)\le \E_{\Lambda_N}^{GFF}(\vr_x^2) \le C N. $$
	\item 
	\begin{align*}
	\E_{\Lambda_N}[(\vr_{x +e_i} - \vr_x)^2] \le \E_{\Lambda_N}^{GFF}[(\vr_{x+e_i} - \vr_x)^2] \le C.
	\end{align*}
\end{enumerate}
\end{enumerate}
\end{lemma}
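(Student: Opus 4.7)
With $\kappa_1=1/(4d)$ and $\kappa_2=\kappa/2$, the Hamiltonian~\eqref{eq:ham:mixed} becomes $H(\vr)=\tfrac12\langle\vr,A\vr\rangle$ for $A:=-\Delta+\kappa\Delta^2$, so the covariance matrix of $\prob_{\Lambda_N}$ on $\Lambda_N$ is $A^{-1}$. Similarly the covariance matrices of $\prob^{MM}_{\Lambda_N}$ and $\prob^{GFF}_{\Lambda_N}$ are $(\Delta^2)^{-1}$ and $(-\Delta)^{-1}$ respectively. A summation by parts on $\ell^2(\Lambda_N)$, which is legitimate because the set $\Lambda_N$ is designed so that $\Delta$ and $\Delta^2$ act on functions with the correct double zero boundary on $\partial_2\Lambda_N$, shows that $-\Delta$ and $\Delta^2$ are both positive definite on this space. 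My plan is to exploit the monotonicity of matrix inversion: if $P\ge Q>0$ in the positive semidefinite order then $Q^{-1}\le P^{-1}$.

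The two first inequalities in the lemma then follow from two trivial decompositions of $A$. In part~\ref{DGF_bound}, the bound $A=\kappa\Delta^2+(-\Delta)\ge \kappa\Delta^2$ (which is just $-\Delta\ge 0$) yields $A^{-1}\le\kappa^{-1}(\Delta^2)^{-1}$; testing this matrix inequality against the vector $\delta_x$ gives $G_{\Lambda_N}(x,x)\le\kappa^{-1}\E^{MM}_{\Lambda_N}[\vr_x^2]$, and testing against $\delta_{x+e_i}-\delta_x$ gives the corresponding gradient comparison. In part~(II), the alternative decomposition $A=(-\Delta)+\kappa\Delta^2\ge -\Delta$ (using $\kappa\Delta^2\ge 0$) yields $A^{-1}\le(-\Delta)^{-1}$, and the same two testings give the pointwise and gradient comparisons with the discrete Gaussian free field.

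It remains to bound the two pure models, which enter only through their variances in a box of side $N$. For the membrane model in $d=1,2,3$ the estimates $\E^{MM}_{\Lambda_N}[\vr_x^2]\le CN^{4-d}$ and the $CN$, $C\log N$, $C$ gradient bounds in $d=1,2,3$ respectively follow from the known scaling of the Dirichlet biLaplacian Green's function on a box of side $N$ (see for instance the estimates used in~\cite{mm_scaling,schweiger:2019}). For the GFF in $d=1$, the bound $\E^{GFF}_{\Lambda_N}[\vr_x^2]\le CN$ and the $O(1)$ gradient bound are immediate from the random walk bridge representation, since the increments are i.i.d.\ standard Gaussians conditioned to sum to zero. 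I do not expect any substantive obstacle: the only point that needs some care is verifying the PSD comparisons with the right boundary conditions, and that is exactly what the two-step outer boundary $\partial_2\Lambda_N$ in the definition of $\Lambda_N$ is set up to guarantee.
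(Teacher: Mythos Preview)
Your argument is correct and is essentially the paper's proof: the paper invokes the Brascamp--Lieb inequality, which for centered Gaussian measures is exactly the operator monotonicity $P\ge Q>0\Rightarrow P^{-1}\le Q^{-1}$ that you use (note your general statement has the inequality reversed, though your applications use the correct direction). For the pure-model bounds the paper likewise quotes M\"uller--Schweiger for $d=2,3$ and the random-walk bridge for the $d=1$ GFF; the only place it does a little extra work is the $d=1$ membrane gradient bound, which it proves by hand via the integrated random walk representation, so you may want to single that case out rather than fold it into the general reference.
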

\begin{proof}~
\begin{enumerate}[leftmargin=*,label=(\Roman*),ref=(\Roman*)]
\item To show the first bound one can first show using Theorem 5.1 of \cite{brascamp_lieb} that 
\begin{align*}
G_{\Lambda_N}(x,\,x) \le \kappa^{-1}\E_{\Lambda_N}^{MM}(\vr_x^2).
\end{align*}
The bound for the $d=1$ case can be obtained using the random walk representation of the model used in Lemma~\ref{lem:one_mm}. For $d=2,\,3$ we obtain the bound from Theorem 1.1 of \cite{Mueller:Sch:2017}.

For the second part the Brascamp-Lieb inequality yields 
\begin{align*}
\E_{\Lambda_N}[(\vr_{x+e_i} - \vr_x)^2]\le \kappa^{-1}\E^{MM}_{\Lambda_N}[(\vr_{x+e_i} - \vr_x)^2].
\end{align*}
The bound now follows from Lemma~\ref{lem:one_mm} (for $d=1$) and Theorem 1.1 of \cite{Mueller:Sch:2017} (for $d=2,3$).

\item The argument in this case are similar to the above case. The bounds in this case are obtained using the Brascamp-Lieb inequality and an argument similar to the proof of Lemma 12 of \cite{mixed_scaling}.\qedhere
\end{enumerate}
\end{proof}
The detailed argument for tightness is similar to that in \citet[Section 2]{mm_scaling} and hence skipped.

To conclude the finite dimensional convergence we first show the convergence of the covariance. We shall discuss the argument for the cases when $\kappa\gg N^2$  and $\kappa\sim 2d N^2$. The argument for both cases is the same.  In the other instance, that is $\kappa\ll N^{2}$, we can argue similarly using the following additional piece of information: the covariance function $$G_D(x,y)=\min\{x,y\} -xy,\quad x,y\in\overline D$$ of the Brownian bridge is nothing but the Green's function for the problem 
\begin{align*}
\begin{cases}
-\frac{\De^2 u}{\De x^2}(x) = f(x) & x\in D\\
u(x)=0 & x\in\partial D.
\end{cases}
\end{align*}

Suppose now $\kappa\gg N^2$  or $\kappa\sim 2d N^2$. For $x,y\in \overline D\cap N^{-1}\Z^d$ we define
 $$G_{\frac1N}(x,y):=(2d)^{-2}\kappa N^{d-4}G_{\Lambda_N}(Nx,Ny).$$ 
 
 We now interpolate $G_{\frac1N}$ in a piece-wise constant fashion on small squares of $\overline D \times \overline D$ to get a new function $G_{\frac1N}^I$.
 We show that $G_{\frac1N}^I$ converges uniformly to $G_D$ on $\overline D\times\overline D$. Indeed, let $F_N:=G_{\frac1N}^I - G_D$. Similarly as in the proof of the finite dimensional convergence in Theorem~\ref{thm:main}~\ref{thm:MM} or Theorem~\ref{thm:main}~\ref{thm:MIXED} it follows that, for any $f,\,g\in C_c^\infty(D)$,
\begin{align*}
\lim_{N\to\infty}\sum_{x, y\in \frac1N D_N} N^{-2d} G^I_{\frac1N}(x,y) f(x) g(y)= \iint_{D\times D} G_D(x,y)f(x)g(y)\De x \De y.
\end{align*}
Again from Riemann sum convergence we have 
\begin{align*}
\lim_{N\to\infty}\sum_{x, y\in \frac1ND_N} N^{-2d} G_D(x,y) f(x) g(y)=\iint_{D\times D} G_D(x,y)f(x)g(y)\De x \De y.
\end{align*}
Thus we get
\begin{align}
\lim_{N\to\infty}\sum_{x, y\in \frac1N D_N} N^{-2d} F_N(x,y) f(x) g(y)=0\label{eq:unique_zero_m}.
\end{align}
Note that $G_D$ is bounded and $$\sup_{x,y\in\frac1N D_N}|G_{\Lambda_N}(Nx,Ny)| \le C\kappa^{-1}N^{4-d}.$$
These imply that $$\sup_{x,y\in\overline D} |F_N(x,y)|\le C.$$
Thus $F_N$ has a subsequence converging uniformly to some function $F$ which is bounded by $C$. With abuse of notation we denote this subsequence by $F_N$. We then have 
\begin{align*}
\lim_{N\to\infty}\sum_{x, y\in \frac1N D_N} N^{-2d} F_N(x,y) f(x) g(y)= \iint_{D\times D}  F(x,y)f(x)g(y)\De x \De y.
\end{align*}
Uniqueness of the limit gives
$$\iint_{D\times D}  F(x,y)f(x)g(y)\De x \De y = 0$$
 by~\eqref{eq:unique_zero_m}. From this we obtain that $F(x,y)=0$ for almost every $x$ and almost every $y$. The definition by interpolation of $G^I_{\frac1N}$ ensures that $F$ is pointwise equal to zero. Finally, the fact that the original sequence $F_N$ converges uniformly to zero follows using the subsequence argument. 

We now show the finite dimensional convergence. First let $t\in\overline D$. We write 
$$\Psi_N(t)=\Psi_{N,1}(t) + \Psi_{N,2}(t)$$
where $\Psi_{N,1}(t):=(2d)^{-1}\sqrt{\kappa}N^{\frac{d-4}2}\vr_{\floor{Nt}}$ and $\Psi_{N,2}(t):=\psi_N(t)-\psi_{N,1}(t)$. From Lemma~\ref{DGF_bound0}\ref{DGF_bound}\ref{2.DGF_bound0} it follows that $\E_{\Lambda_N}[\Psi_{N,2}(t)^2]$ goes to zero as $N$ tends to infinity. Therefore to show that $\Psi_N(t)$ converges in distribution it is enough to show that $\var[\Psi_{N,1}(t)]\to G_D(t,t)$. But we have
\begin{align*}
\var[\Psi_{N,1}(t)]=(2d)^{-2} \kappa N^{d-4}G_{\Lambda_N} \left(\floor{Nt},\,\floor{Nt}\right)=  G^I_{\frac1N}(t,t)\to G_D(t,t)
\end{align*}
since the sequence $F_N$ converges to zero uniformly.
Since the variables under consideration are Gaussian, one can show the finite dimensional convergence using the convergence of the Green's functions. This completes the proof of Theorem~\ref{thm:main2}. \qed

\section{\bf Proof of Theorem \ref{approx_result}}\label{proof_approx_result}
This section is devoted to proof of the error estimation result in Theorem~\ref{approx_result}. To estimate the error we need to develop some Sobolev inequalities in the general setting which involve consistency between discrete and continuous operators. The content of this section can be of independent interest and can possibly be applied to general interface models. We would like to stress that although we follow the ideas involved in~\cite{thomee}, we cannot quote the results from there {\em verbatim} as the coefficients of the discrete operators do not depend on the scaling of the lattice. 
Also another important remark is that the discrete Dirichlet problem involving the operators $L_h$ introduced in \eqref{discrete_op} requires two boundary conditions, but the definition of the limiting operator $-\Delta_c$ involves only one boundary condition. The ideas from \cite{thomee} work well when $L=\Delta_c^2$ or $L=-\Delta_c+\Delta_c^2$. In the case when $L=-\Delta_c$, we assign a cut-off which helps in controlling the error around the boundary. The proof of Theorem~\ref{approx_result}~\ref{thm:gff_one} should be applicable to many other models. 

\subsection{Sobolev-type norm inequalities} The main aim of this Subsection is to have an estimate on the $\ell^2$ norm of a function on the grid in terms of the operator $L_h$ (and its truncated version). Later this turns out to be useful as we use the convergence of $L_h$ to $L$. We continue with all the definitions and notations from Section~\ref{main_ingredient}.

The notion of discrete forward and backward derivatives will be essential in the following arguments.
\begin{align*}
& \partial_ju(x) := \frac1h(u(x+he_j)-u(x)),\\
& \bar\partial_ju(x) := \frac1h(u(x)-u(x-he_j)),\\
& \partial^\alpha:=\partial_1^{\alpha_1}\cdots\partial_d^{\alpha_d},\\
& \bar\partial^\alpha:=\bar\partial_1^{\alpha_1}\cdots\bar\partial_d^{\alpha_d},
\end{align*}
where $\alpha=(\alpha_1,\ldots,\alpha_d)$ is a multi-index.
It is easy to see that
\begin{align*}
 \la \partial_j u\,,\,v \ra_{h,\,grid} = \la u\,,\,\bar\partial_jv \ra_{h,\,grid}
\end{align*}
for grid-functions vanishing outside a finite set.
We now define
$$\|u\|_{h,m}:=\left( \sum_{|\alpha|\le m} \|\partial^\alpha u \|_{h,\,grid}^2\right)^{\frac12}$$
and obtain the following Lemma.

\begin{lemma}[{\citet[Lemma~3.1]{thomee}}]\label{lem:3.1}
	There are constants $C=C_j$ independent of $u$ and $h$ such that
	\begin{equation}\label{eq:3.1.1}
	\|u\|_{h,\,grid}\le C\|\partial_j u\|_{h,\,grid}, \quad u\in\mathcal D_h,\,\,j=1,\ldots, d,
	\end{equation}
	and for fixed $m\ge 1$,
	\begin{equation}\label{eq:3.1.2}
	\|u\|_{h,\,grid}\le C\| u\|_{h,m}, \quad u\in\mathcal D_h.
	\end{equation}
\end{lemma}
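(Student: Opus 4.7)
The plan is to prove the first inequality \eqref{eq:3.1.1} as a discrete Poincar\'e-type estimate via a one-directional telescoping argument, using the boundedness of $D$ and the vanishing of $u$ outside $R_h\subset D$. The second bound \eqref{eq:3.1.2} is then immediate from the definition of $\|\cdot\|_{h,m}$.

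First I would fix $M>0$ with $\overline D\subset[-M,M]^d$. For any $x\in h\Z^d$, let $K_j(x)$ be the smallest nonnegative integer such that $x+K_j(x)he_j\notin R_h$; since the $e_j$-slice of $D$ through $x$ has length at most $2M$, one has $K_j(x)\le 2M/h+1$. By construction $u(x+K_j(x)he_j)=0$, so the telescoping identity
$$u(x)=-h\sum_{k=0}^{K_j(x)-1}\partial_j u(x+khe_j)$$
holds for every $x\in h\Z^d$ (trivially if $x\notin R_h$, in which case $K_j(x)=0$ and $u(x)=0$). Cauchy--Schwarz gives
$$|u(x)|^2\le K_j(x)\,h^2\sum_{k=0}^{K_j(x)-1}|\partial_j u(x+khe_j)|^2\le 2Mh\sum_{k=0}^{K_j(x)-1}|\partial_j u(x+khe_j)|^2.$$

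Next I would swap the order of summation: a fixed $y\in h\Z^d$ appears in the double sum $\sum_x\sum_{k=0}^{K_j(x)-1}$ at most $\lceil 2M/h\rceil+1$ times (once for each admissible $k$ via the parametrization $x=y-khe_j$). Hence
$$\sum_{x\in h\Z^d}|u(x)|^2\le 2Mh\,\bigl(2M/h+1\bigr)\sum_{y\in h\Z^d}|\partial_j u(y)|^2\le C(M)\sum_{y\in h\Z^d}|\partial_j u(y)|^2,$$
and multiplying through by $h^d$ yields \eqref{eq:3.1.1}, with a constant that depends only on $D$.

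The bound \eqref{eq:3.1.2} is immediate from the definition: since $\|u\|_{h,m}^2=\sum_{|\alpha|\le m}\|\partial^\alpha u\|_{h,\,grid}^2$ includes the $\alpha=0$ term $\|u\|_{h,\,grid}^2$, one may take $C=1$. The only substantive point is that the constant in the first bound must be independent of $h$, which follows because the telescoping length $K_j(x)$ is bounded by a fixed multiple of $1/h$, so the factor $Mh\cdot K_j(x)$ is $O(1)$ uniformly in $h$. There is no genuine obstacle: this is the standard discrete Poincar\'e inequality on $h\Z^d$.
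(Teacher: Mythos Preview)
Your proof is correct and is precisely the standard discrete Poincar\'e argument one would expect here. The paper does not supply its own proof of this lemma but simply cites \citet[Lemma~3.1]{thomee}; your telescoping-plus-Cauchy--Schwarz derivation is essentially the classical one, so there is nothing to contrast. One cosmetic remark: from $K_j(x)\le 2M/h+1$ you get $K_j(x)h^2\le (2M+h)h$ rather than $2Mh$, but this is harmless---just absorb the extra $h$ into the constant (e.g.\ bound by $(2M+1)h$ for $h\le1$).
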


We will need the following norm which rescales the function near the boundary:
$$|||u|||_{h,m}:= \left( h^d\left(\sum_{x\in R_h^*} u(x)^2 + \sum_{x\in B_h^*} (h^{-m} u(x))^2\right)\right)^{\frac12}, \quad u\in\mathcal D_h.$$
We can relate the weighted Sobolev norm $|||\cdot|||_{h,m}$ to $||\cdot||_{h,m}$ with this bound:
\begin{lemma}[{\citet[Lemma~3.4]{thomee}}]\label{lem:3.4}
	There is a constant $C$ independent of $u$ and $h$ such that
	\begin{align*}
	|||u|||_{h,m} \le C \|u\|_{h,m}, \quad u\in\mathcal D_h.
	\end{align*}
\end{lemma}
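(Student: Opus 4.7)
The plan is to split $|||u|||_{h,m}^2$ into two contributions and bound them separately. The interior piece $h^d\sum_{x\in R_h^*} u(x)^2$ is bounded directly by $\|u\|_{h,\,grid}^2$, which in turn is at most $C\|u\|_{h,m}^2$ by Lemma~\ref{lem:3.1}\eqref{eq:3.1.2}.

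The core of the argument is the boundary piece $h^d\sum_{x\in B_h^*} (h^{-m}u(x))^2$. Here the defining property of $\mathcal D_h$ is crucial: since $u$ vanishes on $B_h$ and, by the construction of $B_h^*$, every $x\in B_h^*$ has at least one ``neighbour'' (in the enlarged sense $x\pm he_i$ or $x\pm h(e_i\pm e_j)$) lying in $B_h$, we can pick $y$ with $|y-x|_\infty\le 2h$ and $u(y)=0$. Writing $u(x)=u(x)-u(y)$ and telescoping along the (length $\le 2$) grid path from $y$ to $x$ expresses $u(x)$ as a linear combination, with $h$-independent coefficients, of terms $h\,\partial_j u(z)$ for $z$ ranging over a bounded grid neighbourhood of $x$.

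For $m=1$ this immediately yields
\[
|u(x)|^2 \le C h^2\sum_{j}\sum_{z\in\mathcal N(x)}|\partial_j u(z)|^2;
\]
squaring, summing over $x\in B_h^*$, and multiplying by $h^{d-2}$ gives the bound $C\|u\|_{h,1}^2$. For general $m$, one iterates this telescoping together with the discrete Poincar\'e--Friedrichs inequality~\eqref{eq:3.1.1} from Lemma~\ref{lem:3.1}, which allows one to trade each extra power of $h^{-1}$ in the boundary weight for one additional discrete derivative. This ultimately produces, pointwise on $B_h^*$, an estimate of the form
\[
(h^{-m}u(x))^2 \le C \sum_{1\le|\alpha|\le m}|\partial^\alpha u(z_\alpha(x))|^2,
\]
with the $z_\alpha(x)$ living in a bounded neighbourhood of $x$, so that global summation collapses to $C\|u\|_{h,m}^2$.

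The main obstacle I expect is the bookkeeping in this iteration: one must check that each intermediate difference operator remains supported in a region where the relevant Poincar\'e estimate can be applied, and that the multiplicity of any given interior point $z$ in the iterated sums stays bounded uniformly in $h$. This is the place where Thom\'ee's original argument in~\cite{thomee} has to be followed and carefully adapted to accommodate the extended $2$-neighbour structure built into the definitions of $R_h^*$ and $B_h^*$ in the present setup.
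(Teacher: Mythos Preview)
The paper does not give a proof of this lemma; it is simply quoted from \cite{thomee}. Your sketch is therefore already more detailed than what the paper offers, and the overall strategy --- separate the $R_h^*$ contribution (trivial) from the $B_h^*$ contribution and handle the latter by telescoping $u(x)$ to a nearby zero of $u$ --- is exactly Thom\'ee's.

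There is, however, one genuine gap. The pointwise bound you claim for $m\ge 2$,
\[
(h^{-m}u(x))^2 \le C \sum_{1\le|\alpha|\le m}|\partial^\alpha u(z_\alpha(x))|^2,
\]
does \emph{not} follow from the Poincar\'e--Friedrichs inequality~\eqref{eq:3.1.1}: that inequality is a global $\ell^2$ estimate and cannot convert a leftover factor of $h^{-1}$ at a single boundary point into an extra discrete derivative. What actually produces the extra factor is a \emph{second} telescoping, and for that you need a second zero of $u$ along the same coordinate direction within bounded distance of $x$. The substance of Thom\'ee's argument --- and the ``bookkeeping'' you correctly anticipate --- is precisely the geometric verification that the enlarged neighbour set $\{x\pm he_i,\,x\pm h(e_i\pm e_j)\}$ forces, for every $x\in B_h^*$, the existence of some coordinate direction $e_i$ and two points $x+k_1he_i,\,x+k_2he_i$ (with $|k_l|$ bounded independently of $h$) lying outside $R_h$. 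With two collinear zeros one writes $u(x)$ as a bounded combination of second differences times $h^2$, and the desired pointwise estimate follows directly. So your outline is sound up to the point where you invoke Lemma~\ref{lem:3.1}; from there the correct tool is this two-zeros observation rather than Poincar\'e, and that is what needs to be extracted from \cite{thomee}.
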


We rewrite $L_h$ in \eqref{discrete_op} as
\begin{equation}\label{eq:newL_h}L_h u(x) = h^{-2m}\sum_{\eta} c_\eta u(x+\eta h),\end{equation}
where $\eta=(\eta_1,\ldots,\eta_d)$ with the $\eta_j$'s being integers and the $c_\eta$'s being real numbers which may depend on $h$. We now define the characteristic polynomial of $L_h$ by
\begin{align}\label{char_poly}
p(\theta):= \sum_{\eta} c_\eta \e^{\iota \la \eta\,,\, \theta \ra},
\end{align}
where $\theta=(\theta_1,\ldots,\theta_d)$ and $\la \eta\,,\, \theta \ra = \sum_{j=1}^d \eta_j \theta_j$.
We have the following Lemma:
\begin{lemma}\label{lem3.2}
\begin{align*}
\la L_hu\,,\,u\ra_{h,\,grid} = h^{d-2m}(2\pi)^{-d}\int_S p(\theta) |\hat{u}(\theta)|^2\mathrm{d}\theta,\quad u\in\mathcal D_h.
\end{align*}
where $$\hat{u}(\theta) = \sum_{\xi\in\Z^d} u(\xi h)\e^{-\iota \la \xi\,,\, \theta \ra}$$ and $S= \{\theta: |\theta_j|\le \pi,\, j=1,\ldots,d\}.$
\end{lemma}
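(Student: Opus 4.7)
The plan is to prove this identity as a direct application of the Parseval/Plancherel formula for discrete Fourier series, exploiting the fact that $u\in\mathcal D_h$ has finite support, so $\hat u$ is a trigonometric polynomial and all interchanges of sum and integral are justified without further argument.

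First I would compute the Fourier transform of $L_hu$. Starting from the representation~\eqref{eq:newL_h}, for any $\xi\in\Z^d$,
\begin{align*}
\widehat{L_hu}(\theta)=\sum_{\xi\in\Z^d}L_hu(\xi h)\,\e^{-\iota\la\xi,\theta\ra}=h^{-2m}\sum_{\eta}c_\eta\sum_{\xi\in\Z^d}u((\xi+\eta)h)\,\e^{-\iota\la\xi,\theta\ra}.
\end{align*}
Performing the change of variable $\xi'=\xi+\eta$ in the inner sum and factoring the resulting $\e^{\iota\la\eta,\theta\ra}$ out, one recognizes the characteristic polynomial \eqref{char_poly} and obtains the symbol identity
\begin{align*}
\widehat{L_hu}(\theta)=h^{-2m}p(\theta)\,\hat u(\theta).
\end{align*}

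Second, I would invoke Parseval's formula for the Fourier series on $S=[-\pi,\pi]^d$. For any two grid-functions $f,g$ vanishing outside a finite set,
\begin{align*}
h^d\sum_{\xi\in\Z^d}f(\xi h)\,\overline{g(\xi h)}=h^d(2\pi)^{-d}\int_S\hat f(\theta)\,\overline{\hat g(\theta)}\,\De\theta,
\end{align*}
which follows from the orthogonality relation $\int_S\e^{\iota\la\xi-\xi',\theta\ra}\De\theta=(2\pi)^d\mathbf 1_{\xi=\xi'}$. Applying this identity with $f=L_hu$ and $g=u$ (which is real-valued, so $\overline{u(\xi h)}=u(\xi h)$), and substituting the symbol computation from the first step, gives
\begin{align*}
\la L_hu,u\ra_{h,\,grid}=h^d(2\pi)^{-d}\int_S h^{-2m}p(\theta)\,\hat u(\theta)\,\overline{\hat u(\theta)}\,\De\theta=h^{d-2m}(2\pi)^{-d}\int_S p(\theta)\,|\hat u(\theta)|^2\,\De\theta,
\end{align*}
which is precisely the claim.

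There is no real obstacle here; the only point worth checking carefully is the convergence of the defining series for $\hat u$, but since $u\in\mathcal D_h$ is supported on the finite set $R_h$, $\hat u(\theta)$ is a finite sum of exponentials and all manipulations (change of summation order, change of variable $\xi\mapsto\xi+\eta$) are trivially valid. The lemma is essentially the standard statement that constant-coefficient translation-invariant operators on $h\Z^d$ are multipliers in Fourier space, together with Parseval on the torus $S$; this is what makes it a useful tool for the Sobolev-type bounds developed later in Section~\ref{proof_approx_result}.
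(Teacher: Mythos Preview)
Your proof is correct and follows essentially the same Fourier-analytic idea as the paper's: both exploit that $L_h$ is a translation-invariant finite-difference operator with symbol $p(\theta)$, together with Parseval on the torus $S$. The only cosmetic difference is ordering---you first compute $\widehat{L_hu}=h^{-2m}p(\theta)\hat u$ and then apply Parseval, whereas the paper expands $\la L_hu,u\ra_{h,\,grid}$ directly and substitutes the Fourier inversion formula $c_\eta=(2\pi)^{-d}\int_S p(\theta)\e^{-\iota\la\eta,\theta\ra}\De\theta$ to recover $|\hat u|^2$.
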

\begin{proof}We expand
\begin{align*}
\la L_hu\,,\,u\ra_{h,\,grid} &= h^d \sum_{x\in h\Z^d} L_hu(x)u(x)\\
& \stackrel{\eqref{eq:newL_h}}{=} h^{d-2m}\sum_{x\in h\Z^d}\sum_{\eta\in\Z^d} c_\eta u(x+\eta h) u(x)\\
& = h^{d-2m}\sum_{x,\,\xi\in h\Z^d} c_{\frac{\xi-x}h} u(\xi) u(x).
\end{align*}
By inverting \eqref{char_poly} we have $$c_\eta = (2\pi)^{-d} \int_S p(\theta) \e^{-\iota \la \eta,\,\theta \ra} \mathrm{d}\theta.$$
Thus 
\begin{align*}
\la L_hu\,,\,u\ra_{h,\,grid} &= h^{d-2m}\sum_{x,\,\xi\in h\Z^d} (2\pi)^{-d} \int_S p(\theta) \e^{-\iota \la \frac{\xi-x}h,\,\theta \ra} \mathrm{d}\theta u(\xi) u(x)\\
& =  h^{d-2m}(2\pi)^{-d}\int_S p(\theta) |\hat{u}(\theta)|^2\mathrm{d}\theta.\qedhere
\end{align*}
\end{proof}
We will also need
\begin{lemma}[{\citet[Lemma~3.3]{thomee}}]\label{lem:3.3}
There is a constant $C$ independent of $u$ and $h$ such that
\begin{align*}
\|u\|_{h,m}^2 \le C \sum_{j=1}^d \|\partial_j^m u\|_{h,\,grid}^2,\quad u\in\mathcal D_h.
\end{align*}
\end{lemma}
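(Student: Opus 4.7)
The inequality controls all discrete partial derivatives of total order $\le m$ by the $d$ pure discrete derivatives of order exactly $m$. I would prove it in two stages: first \emph{upgrade} lower-order mixed derivatives to order $m$, then \emph{replace} mixed order-$m$ derivatives by pure ones via Fourier.

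\textbf{Step 1 (Raising the order).} Fix a multi-index $\alpha$ with $|\alpha|=k<m$. Lemma~\ref{lem:3.1}~\eqref{eq:3.1.1} is stated only for elements of $\mathcal D_h$, but $\partial^\alpha u$ also vanishes outside a finite set (in fact, outside a slight enlargement of $R_h$, which is fine since the Poincaré-type estimate only requires compact support in $\Z^d$, not membership in $\mathcal D_h$ — this point is standard and is how Thomée applies his Lemma~3.1 in the same situation). Picking any coordinate $j$ and applying the gradient Poincaré inequality $m-k$ times, one obtains a multi-index $\beta$ with $|\beta|=m$ and $\beta\ge\alpha$ componentwise such that
\begin{equation*}
\|\partial^\alpha u\|_{h,\,grid} \le C_\alpha \,\|\partial^\beta u\|_{h,\,grid}.
\end{equation*}
Hence it suffices to bound $\|\partial^\beta u\|_{h,\,grid}^2$ by $\sum_j \|\partial_j^m u\|_{h,\,grid}^2$ for every $|\beta|=m$.

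\textbf{Step 2 (Mixed $\Rightarrow$ pure via Fourier).} A direct computation with the Fourier transform $\hat u$ introduced in Lemma~\ref{lem3.2} gives $\widehat{\partial_j u}(\theta)=h^{-1}(\e^{\iota\theta_j}-1)\hat u(\theta)$, so by Parseval
\begin{equation*}
\|\partial^\beta u\|_{h,\,grid}^2 = h^{d-2m}(2\pi)^{-d}\int_S \prod_{j=1}^d |\e^{\iota\theta_j}-1|^{2\beta_j}\,|\hat u(\theta)|^2\,\mathrm d\theta.
\end{equation*}
Setting $x_j:=|\e^{\iota\theta_j}-1|^2\ge 0$, the elementary inequality
\begin{equation*}
\prod_{j=1}^d x_j^{\beta_j} \;\le\; \bigl(\max_j x_j\bigr)^{|\beta|} \;=\; \max_j x_j^{m} \;\le\; \sum_{j=1}^d x_j^{m},
\end{equation*}
valid whenever $|\beta|=m$, immediately yields
\begin{equation*}
\|\partial^\beta u\|_{h,\,grid}^2 \le \sum_{j=1}^d \|\partial_j^m u\|_{h,\,grid}^2.
\end{equation*}

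\textbf{Conclusion.} Combining Step 1 and Step 2 and summing over the finitely many multi-indices with $|\alpha|\le m$ gives a constant $C=C(m,d)$ with $\|u\|_{h,m}^2\le C\sum_{j=1}^d\|\partial_j^m u\|_{h,\,grid}^2$, which is the claim.

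\textbf{Anticipated obstacle.} The only delicate point is that Lemma~\ref{lem:3.1} is stated for $\mathcal D_h$ (functions vanishing off $R_h$), whereas $\partial^\alpha u$ vanishes only off a slight enlargement; this is harmless because the Poincaré inequality goes through for any grid function with compact support, but it is worth stating explicitly. Apart from that, a purely physical-space proof via summation by parts would be awkward for genuinely mixed derivatives such as $\partial_1\partial_2$, since the operators $\bar\partial_j\partial_j$ are not exactly $\partial_j^2$; the Fourier route bypasses this cleanly and with the sharp constant $1$.
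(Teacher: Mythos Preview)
Your proof is correct and follows essentially the same route as the paper: reduce to a pointwise symbol inequality via Parseval (the paper phrases this through Lemma~\ref{lem3.2} and the characteristic polynomial of $\bar\partial^\alpha\partial^\alpha$ versus $Q_h$), and handle $|\alpha|<m$ by Lemma~\ref{lem:3.1}. The only cosmetic difference is that you bound $\prod_j x_j^{\beta_j}\le(\max_j x_j)^m\le\sum_j x_j^m$, whereas the paper uses weighted AM--GM to reach the same conclusion; your technical remark about $\partial^\alpha u$ having compact support but not lying in $\mathcal D_h$ is a point the paper leaves implicit.
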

\begin{proof}
We first prove that if $\alpha$ is a multi-index with $|\alpha|=m$ then
\begin{equation}\label{lab1}
\la \bar\partial^\alpha\partial^\alpha u,\,u \ra_{h,\,grid} \le \la Q_h u,\,u \ra_{h,\,grid},\quad u\in\mathcal D_h,
\end{equation}
where $Q_h$ is the difference operator
\begin{equation}Q_h u := \sum_{j=1}^d \bar\partial_j^m \partial_j^m u. \label{Q_h}\end{equation}
Similar to \eqref{char_poly} we can show the characteristic polynomial of $\bar\partial^\alpha\partial^\alpha$ and $Q_h$ are respectively $$q_1(\theta) = 2^m \prod_{j=1}^d (1-\cos \theta_j)^{\alpha_j}$$
and
$$q_2(\theta) = 2^m \sum_{j=1}^d (1-\cos \theta_j)^m.$$
Now by the inequality between arithmetic and geometric mean we have
$$q_1(\theta) \le 2^m \sum_{j=1}^d m^{-1}\alpha_j(1-\cos \theta_j)^m \le q_2(\theta).$$
Using Lemma \ref{lem3.2} we obtain \eqref{lab1}, which implies
$$\|\partial^\alpha u\|_{h,\,grid}^2 \le \sum_{j=1}^d \|\partial_j^m u\|_{h,\,grid}^2,\quad u\in\mathcal D_h.$$
For $|\alpha|<m$, one can show using Lemma \ref{lem:3.1}
$$\|\partial^\alpha u\|_{h,\,grid}^2 \le C \sum_{j=1}^d \|\partial_j^m u\|_{h,\,grid}^2,\quad u\in\mathcal D_h.$$
Hence the proof is complete.
\end{proof}
\subsection{Errors in the Dirichlet problem} We have shown some discrete Sobolev inequalities till now. We now relate these directly to our discrete operators. We start dealing with each of the operators separately. Before we do so let us show here the existence and uniqueness of the solution of the discrete boundary value problem \eqref{eq:discrete}-\eqref{eq:discrete boundary}.
\begin{lemma}\label{fact:unique_discrete}
	The finite difference Dirichlet problem \eqref{eq:discrete}-\eqref{eq:discrete boundary} has exactly one solution for arbitrary $f$.
\end{lemma}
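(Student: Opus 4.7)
The plan is to exploit the finite-dimensional structure of the problem and reduce existence to uniqueness, which will then follow from a standard discrete energy argument. Since \eqref{eq:discrete}--\eqref{eq:discrete boundary} is a linear system with $|R_h|$ unknowns (the values $(u_h(x))_{x\in R_h}$) and exactly $|R_h|$ equations (one per $x\in R_h$, with the zero boundary values on $B_h$ substituted in), its solvability for arbitrary $f$ is equivalent to injectivity of the associated endomorphism on $\mathcal{D}_h$, i.e.\ to uniqueness with $f\equiv 0$.

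To establish uniqueness, I would take $u_h\colon D_h\to\R$ satisfying $L_h u_h(x)=0$ for all $x\in R_h$ and $u_h(x)=0$ for all $x\in B_h$, and extend $u_h$ by zero outside $D_h$. The resulting function lies in $\mathcal{D}_h$, since it vanishes on $B_h\cup(h\Z^d\setminus D_h)=h\Z^d\setminus R_h$. Testing $L_h u_h$ against $u_h$ then gives
\[
\la L_h u_h, u_h\ra_{h,\,grid} \;=\; h^d\sum_{x\in R_h} L_h u_h(x)\,u_h(x) \;=\; 0,
\]
since $u_h$ is supported in $R_h$ while $L_h u_h$ vanishes there.

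The final step is a summation by parts (justified by the finite support of $u_h$), which yields
\[
\la -\Delta_h u_h, u_h\ra_{h,\,grid}=\sum_{j=1}^d\|\bar\partial_j u_h\|_{h,\,grid}^2,\qquad \la \Delta_h^2 u_h, u_h\ra_{h,\,grid}=\|\Delta_h u_h\|_{h,\,grid}^2.
\]
Inserting these identities into each of the three explicit forms of $L_h$ from \eqref{discrete_op} rewrites $\la L_h u_h, u_h\ra_{h,\,grid}=0$ as a sum of non-negative contributions equal to zero. In every case the coefficient multiplying $\sum_{j}\|\bar\partial_j u_h\|_{h,\,grid}^2$ is strictly positive (namely $1$, $\rho_2(h)$, and $1$ respectively, with $\rho_2(h)>0$ by assumption), so $\|\bar\partial_j u_h\|_{h,\,grid}=0$ for each $j$. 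Since $u_h\in\mathcal{D}_h$, the discrete Poincar\'e inequality in Lemma~\ref{lem:3.1} then forces $\|u_h\|_{h,\,grid}=0$, i.e.\ $u_h\equiv 0$, completing the proof. There is no real obstacle here: this is the standard discrete energy method, and the only delicate bookkeeping is to verify that the zero boundary values on $B_h$, combined with extension by zero outside $D_h$, legitimately place $u_h$ in $\mathcal{D}_h$ so that Lemma~\ref{lem:3.1} applies.
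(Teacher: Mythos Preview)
Your proof is correct. Both you and the paper reduce to the same finite-dimensional linear algebra observation (a square system, so it suffices to kill the homogeneous solution) and both ultimately invoke Lemma~\ref{lem:3.1}. The difference lies only in how coercivity is extracted. The paper first establishes the quantitative a~priori bound $\|u\|_{h,\,grid}\le C\|L_h u\|_{h,\,grid}$ for $u\in\mathcal D_h$, obtained by lower-bounding the characteristic polynomial $p(\theta)$ via Lemma~\ref{lem3.2} (cf.\ the proofs of Lemmas~\ref{thm:mm_4.2} and~\ref{thm:mixed_4.2}, plus an analogous computation for the $-\Delta_c$ case). You instead compute $\la L_h u_h, u_h\ra_{h,\,grid}$ directly by summation by parts as $\rho\sum_j\|\bar\partial_j u_h\|_{h,\,grid}^2+\rho'\|\Delta_h u_h\|_{h,\,grid}^2$ with $\rho>0$, and read off $\bar\partial_j u_h\equiv 0$. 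Your route is more elementary and self-contained for this lemma; the paper's route recycles Fourier machinery that is needed later anyway and yields a stronger inequality as a by-product.
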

\begin{proof}
We first show the following. There exists a constant $C>0$ independent of $u$ and $h$ such that 
\begin{equation}\label{eq:revised_00}
 	     \|u\|_{h,\,grid}\le C\|L_{h}u\|_{h,\,grid},\quad u\in\mathcal D_h.
\end{equation}
In case $L= \Delta_c^2$ or $-\Delta_c + \Delta_c^2$,~\eqref{eq:revised_00} follows Lemma~\ref{lem:3.1} and from the proof of Lemmas~\ref{thm:mm_4.2}, \ref{thm:mixed_4.2} respectively. For $L=-\Delta_c$ the argument is similar once we observe that
\begin{align*}
	p(\theta)&=-\sum_{i=1}^d (2\cos{\theta_i}-2)+\frac{\rho_1(h)}{h^2}\sum_{i,\,j=1}^d[ 2\cos{(\theta_i + \theta_j)} + 2\cos{(\theta_i -\theta_j)}-4\cos{\theta_i}-4\cos{\theta_j}+4]\\
&=\sum_{i=1}^d (2-2\cos{\theta_i})+\frac{\rho_1(h)}{h^2}\sum_{i,\,j=1}^d[4(1-\cos{\theta_i})(1-\cos{\theta_j})]\\
&\ge 2\sum_{i=1}^d (1-\cos{\theta_i}).
\end{align*}
Now since $u\equiv 0$ in $B_h$, Equation~\eqref{eq:discrete} can be considered as a linear system of equations with the same number of equations as of unknowns (the number of points in $R_h$). Therefore it is sufficient to prove that the corresponding homogeneous system has only the trivial solution i.e. $u\equiv 0$ in $R_h$. This follows from~\eqref{eq:revised_00}.
\end{proof}

\subsubsection{Bilaplacian case: proof of Theorem \ref{approx_result}~\ref{thm:mm_one}}
In this subsection we consider $L:=\Delta_c^2$. Recall $\rho_2(h)\to 0$ and we  have for $x\in h\mathbb{Z}^d$,
    \begin{align*}
     L_hu(x)&=\frac1{h^4}\left[-h^{2}\rho_2(h)\sum_{i=1}^d (u(x+he_i)+u(x-he_i)-2u(x))\right.\\
     &\left.+\sum_{i,\,j=1}^d\left\lbrace u(x+h(e_i+e_j))+u(x-h(e_i+e_j))+u(x+h(e_i-e_j))+u(x-h(e_i-e_j))\right.\right.\\
    &\left.\left.-2(u(x+he_i)-2u(x-he_i)-2(u(x+he_j)-2u(x-he_j) +4u(x))\right\rbrace\right] .
    \end{align*}
We define the operator $L_{h,2}$ as follows:
\begin{equation}\label{eq:Lh2}L_{h,\,2} f(x) = \begin{cases}
 	L_h f(x)  & x\in R_h^\ast\\
 		h^2 L_h f(x)  & x\in B_h^\ast\\
		0 & x\notin R_h.
 		\end{cases}\end{equation}
Then we have the following Lemma involving $L_{h,\,2}$.
\begin{lemma}\label{thm:mm_4.2}
 	  There exists a constant $C>0$ independent of $u$ and $h$ such that 
	    \[
 	     \|u\|_{h,\,2}\le C\|L_{h,\,2}u\|_{h,\,grid},\quad u\in\mathcal D_h.
	    \]
\end{lemma}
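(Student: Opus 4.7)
The plan is to combine three ingredients already in hand: the Parseval-style identity (Lemma \ref{lem3.2}), the ``sum of pure second derivatives'' reduction (Lemma \ref{lem:3.3}), and the weighted boundary norm estimate (Lemma \ref{lem:3.4}). The overall flow is: bound $\|u\|_{h,2}^2$ by $\langle L_h u,u\rangle_{h,\,grid}$ via a pointwise comparison of characteristic polynomials, and then bound $\langle L_h u,u\rangle_{h,\,grid}$ by $\|L_{h,\,2}u\|_{h,\,grid}\,\|u\|_{h,2}$ via a Cauchy--Schwarz argument that exactly uses the $h^2$ boundary rescaling built into the definition of $L_{h,\,2}$.

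For the first step, by Lemma \ref{lem:3.3} and the identity $\|\partial_j^2 u\|_{h,\,grid}^2=\langle \bar\partial_j^2\partial_j^2 u,u\rangle_{h,\,grid}$, we have
$$\|u\|_{h,2}^2\le C\,\langle Q_h u,u\rangle_{h,\,grid},\qquad Q_h=\sum_{j=1}^d\bar\partial_j^2\partial_j^2.$$
A direct computation gives the characteristic polynomial of $h^4 Q_h$ as $4\sum_j (1-\cos\theta_j)^2$ and of $h^4 L_h$ as
$$2h^2\rho_2(h)\sum_i(1-\cos\theta_i)+4\Bigl(\sum_i(1-\cos\theta_i)\Bigr)^2.$$
Since for non-negative $a_i$ one has $\sum a_i^2\le(\sum a_i)^2$, the characteristic polynomial of $h^4 Q_h$ is pointwise dominated by that of $h^4 L_h$. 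Applying Lemma \ref{lem3.2} (both operators being of ``order $2m=4$'' in the sense of \eqref{eq:newL_h}) yields
$$\langle Q_h u,u\rangle_{h,\,grid}\le \langle L_h u,u\rangle_{h,\,grid},\qquad u\in\mathcal D_h,$$
so $\|u\|_{h,2}^2\le C\,\langle L_h u,u\rangle_{h,\,grid}$.

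For the second step, since $u\equiv 0$ outside $R_h=R_h^\ast\cup B_h^\ast$, I split the inner product:
$$\langle L_h u,u\rangle_{h,\,grid}=h^d\!\!\sum_{x\in R_h^\ast}L_{h,\,2}u(x)\,u(x)+h^d\!\!\sum_{x\in B_h^\ast}L_{h,\,2}u(x)\,\bigl(h^{-2}u(x)\bigr),$$
using the definition \eqref{eq:Lh2}. A Cauchy--Schwarz inequality on the two sums bounds the right-hand side by $\|L_{h,\,2}u\|_{h,\,grid}\cdot|||u|||_{h,2}$. Invoking Lemma \ref{lem:3.4}, $|||u|||_{h,2}\le C\|u\|_{h,2}$, and combining with the first step gives
$$\|u\|_{h,2}^2\le C\,\|L_{h,\,2}u\|_{h,\,grid}\,\|u\|_{h,2},$$
which after dividing by $\|u\|_{h,2}$ is the desired estimate.

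The step I expect to be the subtle one is the pointwise comparison of characteristic polynomials: one must recognize that even in the presence of the lower-order term $-\rho_2(h)\Delta_h$ (whose sign in $L_h$ contributes a non-negative term to $p(\theta)$ because of the minus sign and the concavity of cosine), the inequality $\sum(1-\cos\theta_j)^2\le\bigl(\sum(1-\cos\theta_j)\bigr)^2$ gives $Q_h$ controlled by $\Delta_h^2$ alone, independently of how $\rho_2(h)$ behaves as $h\to 0$. Everything after that is a bookkeeping matter, where the only non-trivial design choice is the $h^2$-rescaling in the definition of $L_{h,\,2}$ on $B_h^\ast$; this is precisely calibrated to match the weight $h^{-2}$ that appears on $B_h^\ast$ in $|||\cdot|||_{h,2}$, so that Lemma \ref{lem:3.4} closes the argument.
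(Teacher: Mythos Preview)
Your proof is correct and follows essentially the same approach as the paper: compute the characteristic polynomial of $L_h$, use the pointwise inequality $4\sum_j(1-\cos\theta_j)^2\le p(\theta)$ together with Lemmas~\ref{lem3.2} and~\ref{lem:3.3} to get $\|u\|_{h,2}^2\le C\langle L_h u,u\rangle_{h,\,grid}$, then split the inner product over $R_h^\ast$ and $B_h^\ast$ to introduce $L_{h,\,2}$ and apply Cauchy--Schwarz followed by Lemma~\ref{lem:3.4}. Your commentary on why the $h^2$-rescaling in $L_{h,\,2}$ is calibrated to match the weight in $|||\cdot|||_{h,2}$ is exactly the design principle behind the argument.
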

\begin{proof}
    We consider the characteristic polynomial of $L_h$ and observe that
    \begin{align*}
p(\theta)&=-h^{2}\rho_2(h)\sum_{i=1}^d (2\cos{\theta_i}-2)\\
&+\sum_{i,\,j=1}^d[ 2\cos{(\theta_i + \theta_j)} + 2\cos{(\theta_i -\theta_j)}-4\cos{\theta_i}-4\cos{\theta_j}+4]\\
&=h^{2}\rho_2(h)\sum_{i=1}^d (2-2\cos{\theta_i})+\sum_{i,\,j=1}^d[4(1-\cos{\theta_i})(1-\cos{\theta_j})]\\
&\ge 4 \sum_{i=1}^d (1-\cos{\theta_i})^2.
\end{align*}
Hence by Lemmas~\ref{lem:3.3} and~\ref{lem3.2} we obtain for $u\in\mathcal D_h$
\begin{align*}
\|u\|_{h,2}^2 \le C \sum_{j=1}^d \|\partial_j^2 u\|_{h,\,grid}^2 = C \la Q_h u,\,u \ra_{h,\,grid} \le C \la L_hu,\,u\ra_{h,\,grid},
\end{align*}
where $Q_h$ is the difference operator defined in \eqref{Q_h} with $m=2$.
	Again we have 
	$$\la L_hu,\, u\ra_{h,\,grid} = h^d \left[ \sum_{x\in B_h^*} L_{h,\,2} u(x)\left(h^{-2}u(x)\right) + \sum_{x\in R_h^*} L_{h,\,2} u(x) u(x) \right]$$
	Therefore by Cauchy-Schwarz inequality we have
	$$|\la L_hu,\, u\ra_{h,\,grid}| \le C \|L_{h,\,2}u\|_{h,\,grid} |||u|||_{h,\,2}.$$
	Thus from Lemma \ref{lem:3.4} we have
	\begin{align*}
	\|u\|^2_{h,\,2} \le C \|L_{h,\,2}u\|_{h,\,grid} \, |||u|||_{h,\,2} \le C \|L_{h,\,2}u\|_{h,\,grid} \, \|u\|_{h,\,2}
	\end{align*}
	This completes the proof.
\end{proof}

%

We have now all the ingredients to show Theorem~\ref{approx_result}~\ref{thm:mm_one}.
 
 \begin{proof}[Proof of Theorem~\ref{approx_result}~\ref{thm:mm_one}]
We denote all constants by $C$ and they do not depend on 
 		$u,\, f$.
 		Using Taylor expansion we have for all $x\in R_h$ and for small $h$
 		$$L_hu(x)=h^{-2}\rho_2(h)\mathcal R_2(x) + Lu(x)+h^{-4}\mathcal R_5(x)$$
 		where $\lvert \mathcal R_2(x)\rvert\leq CM_2h^2$ and $\lvert \mathcal R_5(x)\rvert\leq CM_5h^5$. We thus obtain, for $x\in R_h$,
 		\begin{align}
 		L_he_h(x)&=L_hu(x)-L_hu_h(x)\nonumber\\
 		&= h^{-2}\rho_2(h)\mathcal R_2(x) + h^{-4}\mathcal R_5(x).\label{eq:sokhi_bhabona}
 		\end{align}
 		 		For $x\in R^*_h$ we have
 		\begin{align*}
 		L_{h,2}R_he_h(x)&= L_hR_he_h(x)=L_he_h(x)=h^{-2}\rho_2(h)\mathcal R_2(x) + h^{-4}\mathcal R_5(x).
 		\end{align*}
 		For $x\in B^*_h$ at least one among $x\pm h(e_i\pm e_j),\,x\pm he_i$ is in $B_h\setminus \partial D$. For any $y\in B_h\setminus\partial D$ we consider a point $b(y)$ on $\partial D$ of minimal distance to $y$. Note that this distance is at most $2h$. Now using Taylor expansion and the fact that the value of $u$ and all its first order derivatives are zero at $b(y)$ one sees that $$u(y)=u_h(y)+\mathcal R^{'}_2(y)$$ where $\lvert \mathcal R^{'}_2(y)\rvert\leq CM_2h^2$. For $x\in B_h^\ast$ denote by $S(x)$ the neighbors of $x$ which are in $B_h\setminus \partial D$ i.e. $$S(x)= \{ y: y\in B_h\setminus \partial D \cap \{ x\pm h e_i, x\pm h(e_i\pm e_j): 1\le i,j \le d\}\}.$$ Therefore, for $x\in B_h^\ast$,
 		\begin{align}
 		L_{h,2} R_he_h(x)&= h^2 L_h R_he_h(x)\nonumber\\
 		&=h^2 \left\{L_he_h(x)- h^{-4} \sum_{y\in S(x)}\left(h^2\rho_2(h)C(y)e_h(y) + C^{'}(y) e_h(y)\right)\right\}\nonumber\\
 		&\stackrel{\eqref{eq:sokhi_bhabona}}{=} h^2\{h^{-2}\rho_2(h)\mathcal R_2(x) + h^{-4}\mathcal R_5(x)\} + (C \rho_2(h) + C^{'}h^{-2}) \mathcal R^{''}_2(x)\label{eq:long_one}
 		\end{align}
 	    where $\lvert \mathcal R^{''}_2(x)\rvert\leq CM_2h^2$. Hence
 	    \begin{align*}
 	    \|L_{h,2} R_he_h\|_{h,\,grid}^2 & 
 	    \stackrel{\eqref{eq:long_one}}{=} h^d \left[ \sum_{x\in R_h^\ast} \left(h^{-2}\rho_2(h)\mathcal R_2(x) + h^{-4}\mathcal R_5(x)\right)^2 \right.\\
 	    &\left. +\sum_{x\in B_h^\ast} \left(\rho_2(h)\mathcal R_2(x) + h^{-2}\mathcal R_5(x) + (C \rho_2(h) + C^{'}h^{-2}) \mathcal R^{''}_2(x) \right)^2\right]\\
 	    &\le C h^d \left[ \sum_{x\in R_h^\ast} \left(M_2^2(\rho_2(h))^2 + M_5^2h^2\right) +\sum_{x\in B_h^\ast} \left(M_2^2h^{4}(\rho_2(h))^2 + M_5^2h^6+ M_2^2\right)\right]\\
 	    &\le C\left[\left(M_2^2(\rho_2(h))^2 + M_5^2h^2\right) + h \left(M_2^2h^{4} (\rho_2(h))^2 + M_5^2h^6+ M_2^2\right)\right]
 	    \end{align*}
 	    where the last inequality holds as the number of points in $B_h^\ast$ is $O(h^{-(d-1)})$.
 	    Finally to complete our proof we obtain
 	    \begin{align*}\label{eq4N:errorbound}
 	    \|R_he_h\|_{h,\,grid}^2 &\le C\left[M_2^2(\rho_2(h))^2 + M_5^2h^2  + M_2^2h^{5} (\rho_2(h))^2 + M_5^2h^7+ M_2^2 h\right]\\
 	    &\le C\left[M_5^2h^2 + M_2^2(\rho_2(h))^2 +  M_2^2 h\right]
 	    \end{align*}
 	    using Lemmas \ref{lem:3.1} and \ref{thm:mm_4.2}.  
 \end{proof}

\subsubsection{Laplacian + Bilaplacian case: proof of Theorem~\ref{approx_result}~\ref{thm:mixed_one}} 	
In this subsection we consider $L=-\Delta_c + \Delta_c^2$. Recall $\rho_3(h) \to 1$ and we have for $x\in h\mathbb{Z}^d$,
    \begin{align*}
     L_hu(x)&=\frac1{h^4}\left[-h^{2}\sum_{i=1}^d (u(x+he_i)+u(x-he_i)-2u(x))\right.\\
     &\left.+ \rho_3(h)\sum_{i,\,j=1}^d\left\lbrace u(x+h(e_i+e_j))+u(x-h(e_i+e_j))+u(x+h(e_i-e_j))+u(x-h(e_i-e_j))\right.\right.\\
     &\left.\left.-2(u(x+he_i)-2u(x-he_i)-2(u(x+he_j)-2u(x-he_j) +4u(x))\right\rbrace\right].
    \end{align*}
 We define the operator $L_{h,2}$ as in~\eqref{eq:Lh2} and obtain
\begin{lemma}\label{thm:mixed_4.2}
 	  There exists a constant $C>0$ independent of $u$ and $h$ such that 
	    \[
 	     \|u\|_{h,\,2}\le C \|L_{h,\,2}u\|_{h,\,grid},\quad u\in\mathcal D_h.
	    \]
\end{lemma}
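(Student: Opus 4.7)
\medskip

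\noindent\textbf{Proof proposal.} My plan is to mimic the argument that established Lemma~\ref{thm:mm_4.2} in the pure bilaplacian case, with the only substantive change being the explicit form of the characteristic polynomial associated to $L_h=-\Delta_h+\rho_3(h)\Delta_h^2$. First I would rewrite $L_h$ in the canonical form $L_h u(x)=h^{-4}\sum_\eta c_\eta\,u(x+\eta h)$, where the $c_\eta$'s arise by grouping an $h^2$ factor in front of the discrete gradient term and a factor $\rho_3(h)$ in front of the iterated discrete Laplacian. The associated characteristic polynomial defined in~\eqref{char_poly} is then
\begin{align*}
p(\theta) &= -h^2\sum_{i=1}^d(2\cos\theta_i-2) + \rho_3(h)\sum_{i,j=1}^d\bigl[2\cos(\theta_i+\theta_j)+2\cos(\theta_i-\theta_j)-4\cos\theta_i-4\cos\theta_j+4\bigr]\\
&= h^2\sum_{i=1}^d(2-2\cos\theta_i) + 4\rho_3(h)\sum_{i,j=1}^d(1-\cos\theta_i)(1-\cos\theta_j).
\end{align*}

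Next I would extract the quadratic-in-Laplacian lower bound. Since both summands are non-negative and $\rho_3(h)\to 1$ as $h\to 0$, for all sufficiently small $h$ we have $\rho_3(h)\ge 1/2$, and keeping only the diagonal $i=j$ contribution in the second sum gives
\begin{equation*}
p(\theta)\ \ge\ 4\rho_3(h)\sum_{i=1}^d(1-\cos\theta_i)^2\ \ge\ 2\sum_{i=1}^d(1-\cos\theta_i)^2.
\end{equation*}
Combining this with Lemma~\ref{lem3.2} applied to the difference operator $Q_h=\sum_j\bar\partial_j^m\partial_j^m$ with $m=2$ (whose characteristic polynomial is $2^m\sum_j(1-\cos\theta_j)^m$), and then with Lemma~\ref{lem:3.3}, yields, for every $u\in\mathcal{D}_h$,
\begin{equation*}
\|u\|_{h,2}^2\ \le\ C\sum_{j=1}^d\|\partial_j^2 u\|_{h,\,grid}^2\ =\ C\langle Q_h u,u\rangle_{h,\,grid}\ \le\ C\langle L_h u,u\rangle_{h,\,grid}.
\end{equation*}

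Finally, I would reproduce the boundary/interior splitting argument from the proof of Lemma~\ref{thm:mm_4.2}. Writing
\begin{equation*}
\langle L_h u,u\rangle_{h,\,grid} = h^d\Bigl[\sum_{x\in R_h^\ast}L_{h,2}u(x)\,u(x)+\sum_{x\in B_h^\ast}L_{h,2}u(x)\bigl(h^{-2}u(x)\bigr)\Bigr],
\end{equation*}
the Cauchy--Schwarz inequality gives $|\langle L_h u,u\rangle_{h,\,grid}|\le C\|L_{h,2}u\|_{h,\,grid}\,|||u|||_{h,2}$, and an application of Lemma~\ref{lem:3.4} bounds $|||u|||_{h,2}$ by $C\|u\|_{h,2}$. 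Inserting this into the previous display and dividing by $\|u\|_{h,2}$ produces the desired estimate.

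The only place where any actual care is needed is the spectral lower bound on $p(\theta)$: one must use the non-negativity of the gradient contribution and the fact that $\rho_3(h)$ stays bounded away from zero (which is guaranteed by $\rho_3(h)\to 1$) to absorb the $\Delta_h^2$ part into an $h$-uniform multiple of $\sum_i(1-\cos\theta_i)^2$. Everything else in the proof is structurally identical to the bilaplacian case.
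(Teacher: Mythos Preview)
Your proposal is correct and follows essentially the same approach as the paper's own proof: compute the characteristic polynomial, drop the non-negative gradient term to get $p(\theta)\ge 4\rho_3(h)\sum_i(1-\cos\theta_i)^2$, absorb $(\rho_3(h))^{-1}$ into the constant using $\rho_3(h)\to 1$, and then invoke Lemmas~\ref{lem3.2}, \ref{lem:3.3}, \ref{lem:3.4} together with the boundary/interior splitting exactly as in Lemma~\ref{thm:mm_4.2}. The paper in fact omits the last part entirely, simply writing that the rest is similar to Lemma~\ref{thm:mm_4.2}, so your write-up is if anything more detailed.
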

\begin{proof}
    We observe that
    \begin{align*}
p(\theta)&=-h^{2}\sum_{i=1}^d (2\cos{\theta_i}-2)+ \rho_3(h)\sum_{i,\,j=1}^d[ 2\cos{(\theta_i + \theta_j)} + 2\cos{(\theta_i -\theta_j)}-4\cos{\theta_i}-4\cos{\theta_j}+4]\\
&=h^{2}\sum_{i=1}^d (2-2\cos{\theta_i})+ \rho_3(h)\sum_{i,\,j=1}^d[4(1-\cos{\theta_i})(1-\cos{\theta_j})]\\
&\ge 4\rho_3(h)\sum_{i=1}^d (1-\cos{\theta_i})^2.
\end{align*}
Hence by Lemma \ref{lem:3.3} and \ref{lem3.2} we obtain for $u\in\mathcal D_h$
\begin{align*}
\|u\|_{h,2}^2 \le C \sum_{j=1}^d \|\partial_j^2 u\|_{h,\,grid}^2 = C \la Q_h u,\,u \ra_{h,\,grid} \le C (\rho_3(h))^{-1} \la L_hu\,,\,u\ra_{h,\,grid} \le C\la L_hu\,,\,u\ra_{h,\,grid},
\end{align*}
where $Q_h$ is the difference operator defined in \eqref{Q_h} with $m=2$.
The rest of the proof is similar to Lemma \ref{thm:mm_4.2} and hence omitted.
\end{proof}

%

We now prove the approximation result in this case.
\begin{proof}[Proof of Theorem~\ref{approx_result}~\ref{thm:mixed_one}]
As before the constant $C$ does not depend on 
$u$ and $f$. Using Taylor expansion we have for all $x\in R_h$ and for small $h$
$$L_hu(x)=Lu(x)+ (\rho_3(h)-1)\Delta_c^2 u(x) + h^{-2} \mathcal R_4(x)+\rho_3(h)h^{-4}\mathcal R_5(x)$$
where $\lvert \mathcal R_4(x)\rvert\leq CM_4h^4,\,\lvert \mathcal R_5(x)\rvert\leq CM_5h^5$. We obtain for $x\in R_h$
\begin{align*}
L_he_h(x)&=L_hu(x)-L_hu_h(x)\\
&=Lu(x)+ (\rho_3(h)-1)\Delta_c^2 u(x) + h^{-2} \mathcal R_4(x)+\rho_3(h)h^{-4}\mathcal R_5(x)-L_hu_h(x)\\
&=(\rho_3(h)-1)\Delta_c^2 u(x) + h^{-2} \mathcal R_4(x)+\rho_3(h)h^{-4}\mathcal R_5(x).
\end{align*}
For $x\in R^*_h$ we have
\begin{align}\label{eq:looong}
L_{h,2}R_he_h(x)&= L_hR_he_h(x)=L_he_h(x)= (\rho_3(h)-1)\Delta_c^2 u(x) + h^{-2} \mathcal R_4(x)+\rho_3(h)h^{-4}\mathcal R_5(x).
\end{align}
As in the case of $\Delta_c^2$ we have for any $y \in B_h \setminus\partial D$  $$u(y)=u_h(y)+\mathcal R_2(y)$$ where $\lvert \mathcal R_2(y)\rvert\leq C M_2h^2$. Therefore, for $x\in B_h^\ast$,
\begin{align}
L_{h,2} R_he_h(x)&= h^2 L_h R_he_h(x)\nonumber\\
&=h^2 \left\{L_he_h(x)- h^{-4} \sum_{y\in S(x)} \left(h^2 C(y) e_h(y) + \rho_3(h)C^{'}(y) e_h(y)\right)\right\}\nonumber\\
&\stackrel{\eqref{eq:looong}}{=} h^2(\rho_3(h)-1)\Delta_c^2 u(x) + \mathcal R_4(x)
+\rho_3(h)h^{-2}\mathcal R_5(x) \nonumber\\
&\qquad \qquad +C\mathcal R^{'}_2(x) + C h^{-2} \rho_3(h)\mathcal R^{''}_2(x)\label{eq:long}
\end{align}
where $S(x)$ is defined similarly as in $\Delta_c^2$ case, $C(y),\, C^{'}(y)$ are constants depending on $y$ and $\lvert \mathcal R^{'}_2(x)\rvert\leq CM_2h^2,\,\lvert \mathcal R^{''}_2(x)\rvert\leq CM_2h^2$. We have
\begin{align*}
\|L_{h,2} R_he_h\|_{h,grid}^2 &= h^d\sum_{x\in R_h} (L_{h,2} R_he_h(x))^2\\
&= h^d \left[ \sum_{x\in R_h^\ast} (L_{h,2} R_he_h(x))^2+ \sum_{x\in B_h^\ast} (L_{h,2} R_he_h(x))^2 \right]
\end{align*}
which, using the bounds~\eqref{eq:looong}-\eqref{eq:long}, turns into 
\begin{align*}
\|L_{h,2} R_he_h\|_{h,grid}^2
&\le C h^d \sum_{x\in R_h^\ast} \left( (\rho_3(h)-1)^2M_4^2 + M_4^2  h^4 + (\rho_3(h))^2M_5^2h^2 \right)\\
& + Ch^d\sum_{x\in B_h^\ast} \left(h^4(\rho_3(h)-1)^2M_4^2 +M_4^2 h^8 + (\rho_3(h))^2 M_5^2h^6+ M_2^2 + M_2^2 (\rho_3(h))^2h^4 \right)\\
&\le C[(\rho_3(h)-1)^2M_4^2 + M_4^2 h^4 + (\rho_3(h))^2M_5^2h^2 + h^5(\rho_3(h)-1)^2M_4^2 \\
&+M_4^2 h^9 + (\rho_3(h))^2 M_5^2h^7+ M_2^2 h+ M_2^2 (\rho_3(h))^2h^5]
\end{align*}
where in the last inequality we have used that the number of points in $B_h^\ast$ is $O(h^{-(d-1)})$. 
Finally to complete our proof we obtain using Lemma~\ref{lem:3.1} and Lemma~\ref{thm:mixed_4.2}
\begin{align*}\label{eq:mixed_errorbound}
\|R_he_h\|_{h,\,grid}^2 &\le C[(\rho_3(h)-1)^2 M_4^2 + M_4^2 h^4 + (\rho_3(h))^2M_5^2h^2 + h^5(\rho_3(h)-1)^2 M_4^2\\
 &+M_4^2 h^9  + (\rho_3(h))^2 M_5^2h^7+ M_2^2 h+ M_2^2 (\rho_3(h))^2h^5]\\
 & \le C\left[M_5^2h^2 + M_4^2(\rho_3(h)-1)^2  + M_4^2 h^4 + M_2^2 h \right].\qedhere
\end{align*}  
\end{proof}

\subsubsection{Laplacian case: proof of Theorem~\ref{approx_result}~\ref{thm:gff_one}}
In this subsection we consider $L=-\Delta_c$. 
The continuum problem ~\eqref{eqa:continuum} is defined with one boundary condition, whereas in the discrete Dirichlet problem involving $L_h$ two boundary conditions are needed. The contribution of $\Delta_h^2$ is negligible in the limit but for non-zero $h$ it is not. It is the effect of $\rho_1(h)$ which makes $\Delta_h^2$ vanish in the limit. However, if we simply apply the same proof of Theorem~\ref{approx_result}~\ref{thm:mm_one}-\ref{thm:mixed_one}, then if $\rho_1(h)$ does not decay faster than $h$, the method fails to estimate the error. This is due to the fact that one would treat the boundary layer effect and the discretization effect simultaneously. To take care of the different scales at which these effects are seen, we use a suitable cutoff function instead of truncating the discrete operator $L_h$ near the boundary. Using the cutoff we define a function $g$ which is equal to $u$ near the boundary of $D$ and has nice bounds on its derivatives. With the help of $g$ we first take care of the boundary effect. Then we take the discretization parameter $h$ to go to zero and estimate the error.

 Let us first define the cutoff function. Recall that $\delta := \max\{h, \sqrt{\rho_1(h)}\}$. We define
\begin{align*}
D^{\ell\delta} := \{x\in\R^d: \mathrm{dist}(x, \partial D) < \ell\delta\},\quad \ell=1,2,\ldots
\end{align*}
where $\mathrm{dist}(x, \partial D) = \inf \{\|x-y\|: y\in\partial D \} $. Then we have the following Proposition which follows from Theorem 1.4.1 and equation (1.4.2) of~\cite{hormander2015}.
\begin{lemma}
One can find $\phi \in C_c^\infty\left(\overline{D^{7\delta}}\right)$ with $0\le \phi \le 1$ so that $\phi =1$ on $\overline{D^{5\delta}}$ and 
\begin{equation}\label{eq:cutoff_derivative}
\sup_{x\in\R^d} |D^\alpha\phi(x)| \le C_\alpha \delta^{-|\alpha|},
\end{equation}
where $C_\alpha$ depends on $\alpha$ and $d$.
\end{lemma}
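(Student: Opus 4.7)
The plan is to obtain $\phi$ as the mollification of the indicator function of an intermediate tubular neighborhood. Fix a standard nonnegative mollifier $\psi\in C_c^\infty(\R^d)$ with $\int\psi=1$ and $\mathrm{supp}(\psi)\subset B_1(0)$, and rescale it to $\psi_\delta(x):=\delta^{-d}\psi(x/\delta)$, which is supported in $B_\delta(0)$ and still has unit mass. I would then set $\phi:=\mathbf{1}_{\overline{D^{6\delta}}}\ast \psi_\delta$, so that $\phi$ is automatically $C^\infty$ and satisfies $0\le\phi\le 1$.

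The two set-theoretic properties follow from the 1-Lipschitz property of $\mathrm{dist}(\,\cdot\,,\partial D)$. For the support, if $\phi(x)\neq 0$ then there exists $y$ with $|y|\le\delta$ and $x-y\in\overline{D^{6\delta}}$, so $\mathrm{dist}(x,\partial D)\le \mathrm{dist}(x-y,\partial D)+|y|\le 7\delta$, showing $\mathrm{supp}(\phi)\subset\overline{D^{7\delta}}$, which is compact since $D$ is bounded. Conversely, if $x\in\overline{D^{5\delta}}$ then for every $|y|\le\delta$ one has $\mathrm{dist}(x-y,\partial D)\le 6\delta$, hence $\mathbf{1}_{\overline{D^{6\delta}}}(x-y)=1$, and integrating against $\psi_\delta$ gives $\phi(x)=1$.

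For the derivative bound I would differentiate under the integral: $D^\alpha\phi=\mathbf{1}_{\overline{D^{6\delta}}}\ast D^\alpha\psi_\delta$, and the scaling identity $|D^\alpha\psi_\delta(y)|=\delta^{-d-|\alpha|}|(D^\alpha\psi)(y/\delta)|$ together with a change of variables yields
\[
\sup_{x\in\R^d}|D^\alpha\phi(x)|\le \delta^{-|\alpha|}\|D^\alpha\psi\|_{L^1(\R^d)}=:C_\alpha\delta^{-|\alpha|}.
\]
This is precisely the content of the cited Hörmander estimates.

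I do not anticipate a substantive obstacle, as this is the standard mollified-cutoff construction; the only point requiring a moment of care is that $D^{\ell\delta}$ is a tubular neighborhood of $\partial D$ rather than, say, a ball or a half-space, but the argument uses only the triangle inequality for the distance function under translation, so the construction works unchanged.
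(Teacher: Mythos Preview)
Your proof is correct and is exactly the standard mollification construction underlying the cited result of H\"ormander (Theorem~1.4.1 and (1.4.2)); the paper simply invokes that reference without spelling out the argument. Nothing further is needed.
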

We now define a function $g:\overline D \to R$ so that $g= \tilde{\phi} u$ where $\tilde{\phi} $ is the restriction of $\phi$ to $\overline D$. We will use the following bounds of $g$ and its derivatives.
\begin{lemma}\label{lem:bounds_g}
We have
\begin{enumerate}
\item $\displaystyle  \sup_{x\in D}|g(x)| \le CM_1 \delta,$
\item $\displaystyle \sum_{|\alpha| \le 1}\sup_{x\in D}|D^\alpha g(x)| \le CM_1, $
\item $\displaystyle\sum_{|\alpha| \le 2}\sup_{x\in D}|D^\alpha g(x)| \le C(M_1 \delta^{-1} + M_2) .$
\end{enumerate}
Here we recall that $M_k= \sum_{|\alpha| \le k}\sup_{x\in D}|D^\alpha u(x)|$.
\end{lemma}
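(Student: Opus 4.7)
The unifying idea is to apply the Leibniz rule to $g = \tilde\phi u$ and absorb the inverse powers of $\delta$ coming from $D^\alpha \tilde\phi$ against the fact that $u$ vanishes on $\partial D$; thus any factor $\delta^{-|\beta|}$ landing on $\tilde\phi$ is paid for by a corresponding power of $\mathrm{dist}(x,\partial D) \lesssim \delta$ coming from $u$ on the region where $D^\beta \tilde\phi \neq 0$.

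The plan is as follows. First I will observe that $\tilde\phi$ vanishes off $\overline D \cap \overline{D^{7\delta}}$, so $g$ is supported there. For any $x$ in this set, pick $y \in \partial D$ with $|x-y| \le 7\delta$. Since $u(y)=0$ (the boundary condition in~\eqref{eqa:continuum} for $m=1$), a first-order Taylor estimate gives
\[
|u(x)| = |u(x) - u(y)| \le \sup_{z \in D}\|\nabla u(z)\|\, |x-y| \le 7 M_1 \delta,
\]
which immediately yields~(1). Moreover, for any $\ell \ge 0$, the same argument shows that $|u(x)| \le C M_1 \delta$ on the support of $D^\beta \tilde\phi$ for every $|\beta| \ge 0$, since this support is contained in $\overline D \cap \overline{D^{7\delta}}$.

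Next I apply the Leibniz rule:
\[
D^\alpha g = \sum_{\beta \le \alpha} \binom{\alpha}{\beta} (D^\beta \tilde\phi)(D^{\alpha-\beta} u).
\]
For $|\alpha|=1$ there are two terms. In $\tilde\phi \, \partial_i u$ we use $|\tilde\phi|\le 1$ to get a bound of $M_1$. In $(\partial_i \tilde\phi)\, u$ we use~\eqref{eq:cutoff_derivative} with $|\alpha|=1$ to get $|\partial_i \tilde\phi| \le C\delta^{-1}$, and on the support of $\partial_i \tilde\phi$ we apply the first-order bound $|u| \le C M_1 \delta$, giving $|(\partial_i \tilde\phi) u| \le C M_1$. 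Combined with (1) (using $\delta$ bounded for small $h$), this yields~(2).

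For $|\alpha|=2$ there are three types of terms. The term $\tilde\phi\, \partial_i\partial_j u$ is bounded by $M_2$. Each cross term $(\partial_i \tilde\phi)(\partial_j u)$ is bounded by $C \delta^{-1} M_1$. The pure cutoff term $(\partial_i \partial_j \tilde\phi) u$ uses $|\partial_i\partial_j \tilde\phi| \le C\delta^{-2}$ and $|u| \le C M_1 \delta$ on its support, yielding $C M_1 \delta^{-1}$. Summing gives~(3). The only place that requires care is matching a $\delta^{-|\beta|}$ loss on $\tilde\phi$ against only one power of $\delta$ from the boundary vanishing of $u$; this is why the bound in~(3) degrades like $\delta^{-1}$, as opposed to staying uniform in $\delta$.
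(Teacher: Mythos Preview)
Your proof is correct and follows the same approach as the paper: you use the support condition on $\tilde\phi$ to localize to a $7\delta$-neighborhood of $\partial D$, apply a first-order Taylor expansion with $u=0$ on $\partial D$ to get $|u|\le CM_1\delta$ there, and then combine this with the Leibniz rule and the derivative bounds~\eqref{eq:cutoff_derivative}. The paper's proof is simply more terse, summarizing your Leibniz computation as ``the bounds now follow from the definition of $g$ and~\eqref{eq:cutoff_derivative}.''
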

\begin{proof}
We first observe that $g=0$ on $D\setminus \overline {D^{7\delta}}$. For any $x$ in $D\cap \overline{D^{7\delta}}$ we use Taylor series expansion and the fact that $u=0$ on $\partial D$ to obtain $|u(x)| \le CM_1 \delta$. The bounds now follows from the definition of $g$ and~\eqref{eq:cutoff_derivative}.
\end{proof}
We are now ready to prove Theorem~\ref{approx_result}~\ref{thm:gff_one}.
\begin{proof}[Proof of Theorem~\ref{approx_result}~\ref{thm:gff_one}]
For our convenience we denote by $\|\cdot\|_{\ell^2(A)}$ the $\|\cdot\|_{h,\,grid}$ norm of the projection of any grid-function onto the finite subset $A$ of $h\Z^d$. More precisely, for any finite subset $A$ of $h\Z^d$ and function $v:h\Z^d\to \R$ we define 
\begin{align}
\|v\|_{\ell^2(A)}^2:= h^d\sum_{x\in A} v(x)^2.
\end{align}	
We extend $u $ and $g$ on $\R^d$ by defining their values to be zero outside $\overline D$. Also let us extend $u_h$ by defining it to be zero on $h\Z^d\setminus D_h$. Note that $B_h \subset \overline D \cap \overline{D^{5\delta}}$. Thus by definition we have $e_h =u=g$ on $B_h$. Therefore from Lemma~\ref{lem:3.1} we have
\begin{align}\label{eq:revised_0}
\|R_he_h\|_{h,\,grid}^2 &\le 2 \|e_h-g\|_{\ell^2(R_h)}^2 + 2\|g\|_{\ell^2(R_h)}^2 \nonumber\\
&\le C \|\nabla_h (e_h-g)\|_{\ell^2(R_h\cup \partial R_h)}^2 + 2\|g\|_{\ell^2(R_h)}^2
\end{align}
where $$\nabla_h v(x):= (\partial_j v(x))_{j=1}^d,$$ $$\|\nabla_h v\|_{\ell^2(A)}^2:= \sum_{j=1}^d \|\partial_jv\|_{\ell^2(A)}^2,$$
and $\partial R_h :=\{x\in h\Z^d\setminus R_h: \mathrm{dist}_{h\Z^d}(x,\,R_h)=1\}$ with $\mathrm{dist}_{h\Z^d}$ being the graph distance in the lattice $h\Z^d$.
We have for $x\in R_h$
\begin{align*}
L_h(e_h-g)(x)= L_hu(x)-f(x) -L_hg(x).
\end{align*}
Thus 
\begin{equation}\label{eq:revised_1}
\la L_h(e_h-g), e_h-g\ra_{h,\,grid}= \la L_hu-f, e_h-g\ra_{h,\,grid} + \la -L_hg, e_h-g\ra_{h,\,grid}.
\end{equation}
Using integration by parts we obtain
\begin{equation}\label{eq:revised_2}
\la L_h(e_h-g), e_h-g\ra_{h,\,grid} = \|\nabla_h(e_h-g)\|_{\ell^2(R_h\cup \partial R_h)}^2 + \rho_1(h) \|\Delta_h(e_h-g)\|_{\ell^2(R_h\cup \partial R_h)}^2.
\end{equation}
For the first term in equation~\eqref{eq:revised_1} we have, using Lemma~\ref{lem:3.1},
\begin{align}\label{eq:revised_3}
|\la L_hu-f, e_h-g\ra_{h,\,grid}|& \le \|L_hu-f\|_{\ell^2(R_h)} \|e_h-g\|_{\ell^2(R_h)} \nonumber\\
&\le C \|L_hu-f\|_{\ell^2(R_h)} \|\nabla_h(e_h-g)\|_{\ell^2(R_h\cup \partial R_h)} \nonumber\\
&\le C \|L_hu-f\|^2_{\ell^2(R_h)} + \frac14 \|\nabla_h(e_h-g)\|_{\ell^2(R_h\cup \partial R_h)}^2.
\end{align}
For the second term of equation~\eqref{eq:revised_1} we obtain using integration by parts
\begin{align}\label{eq:revised_4}
|\la -L_hg, e_h-g\ra_{h,\,grid}| &\le |\la -\Delta_hg, e_h-g\ra_{h,\,grid}| + \rho_1(h)|\la \Delta_h^2g, e_h-g\ra_{h,\,grid}| \nonumber\\
& \le |\la \nabla_hg, \nabla_h(e_h-g)\ra_{h,\,grid}| + \rho_1(h)|\la \Delta_hg, \Delta_h(e_h-g)\ra_{h,\,grid}|\nonumber\\
& \le \|\nabla_hg\|^2_{\ell^2(R_h\cup \partial R_h)} + \frac14 \|\nabla_h(e_h-g)\|^2_{\ell^2(R_h\cup \partial R_h)} + \rho_1(h)\|\Delta_hg\|^2_{\ell^2(R_h\cup \partial R_h)}\nonumber\\& + \rho_1(h)\|\Delta_h(e_h-g)\|^2_{\ell^2(R_h\cup \partial R_h)}.
\end{align}
Combining~\eqref{eq:revised_1},~\eqref{eq:revised_2},~\eqref{eq:revised_3} and~\eqref{eq:revised_4} we get
\begin{align*}
\|\nabla_h(e_h-g)\|^2_{\ell^2(R_h\cup \partial R_h)} \le C \|L_hu-f\|^2_{\ell^2(R_h)} + C \|\nabla_hg\|^2_{\ell^2(R_h\cup \partial R_h)} + C\rho_1(h)\|\Delta_hg\|^2_{\ell^2(R_h\cup \partial R_h)}.
\end{align*}
Substituting this in~\eqref{eq:revised_0} we obtain
\begin{align}\label{eq:revised_main}
\|R_he_h\|_{h,\,grid}^2 &\le C \|L_hu-f\|^2_{\ell^2(R_h)} + C \|\nabla_hg\|^2_{\ell^2(R_h\cup \partial R_h)}\nonumber\\
& + C\rho_1(h)\|\Delta_hg\|^2_{\ell^2(R_h\cup \partial R_h)} + 2\|g\|_{\ell^2(R_h)}^2.
\end{align}
We now bound each of the term in the right hand side of the inequality~\eqref{eq:revised_main}. Using Taylor series expansion we have for all $x\in R_h$
$$L_hu(x)=Lu(x)+ h^{-2}\mathcal R_4(x) + h^{-4}\rho_1(h) \mathcal R_4^{'}(x)$$
where $\lvert \mathcal R_4(x)\rvert\leq CM_4h^4$ and $\lvert \mathcal R_4^{'}(x)\rvert\leq CM_4h^4$. Now
\begin{align*}
\|L_hu-f\|^2_{\ell^2(R_h)} & \le h^d \sum_{x\in R_h} (M_4^2 h^4 + M_4^2 \rho_1(h)^2 ) \le C M_4^2 \delta^4.
\end{align*}
For the second term of~\eqref{eq:revised_main} we have the bound
\begin{align*}
\|\nabla_hg\|^2_{\ell^2(R_h\cup \partial R_h)} &= h^d\sum_{x\in (R_h \cup\partial R_h)\cap \overline{D^{8\delta}}} h^{-2}\sum_{i=1}^d (g(x+he_i)-g(x))^2\\
&\le C h^d \sum_{x\in (R_h \cup\partial R_h)\cap \overline{D^{8\delta}}} M_1^2 \le CM_1^2 \delta
\end{align*}
where in the first inequality we used Taylor expansion and Lemma~\ref{lem:bounds_g} and in the last inequality we used the fact that number of points in $(R_h \cup\partial R_h)\cap \overline{D^{8\delta}}$ is $O(\delta h^{-d})$. Similarly, for the third term using Taylor expansion, Lemma~\ref{lem:bounds_g} and the fact that number of points in $(R_h \cup\partial R_h)\cap \overline{D^{8\delta}}$ is $O(\delta h^{-d})$ we have
\begin{align*}
\rho_1(h)\|\Delta_hg\|^2_{\ell^2(R_h\cup \partial R_h)}&= \rho_1(h)h^d\sum_{x\in (R_h \cup\partial R_h)\cap \overline{D^{8\delta}}} (\Delta_hg(x))^2\\
&\le C \rho_1(h)h^d\delta h^{-d}(M_1 \delta^{-1} + M_2)^2\\
&\le C\left( M_1^2 \sqrt{\rho_1(h)} + M_2^2 \rho_1(h)\delta \right).
\end{align*}
Finally we obtain
\begin{align*}
\|g\|_{\ell^2(R_h)}^2 & = h^d\sum_{x\in R_h\cap \overline{D^{7\delta}}} g(x)^2 \\
& \le C h^d\sum_{x\in R_h\cap \overline{D^{7\delta}}} M_1^2 \delta^2\\
& \le C M_1^2 \delta^3.
\end{align*}
Here in the first inequality we used Lemma~\ref{lem:bounds_g} and in the last inequality we used the fact that number of points in $R_h \cap \overline{D^{7\delta}}$ is $O(\delta h^{-d})$. Combining all these bounds we obtain from~\eqref{eq:revised_main}
\begin{align*}
\|R_he_h\|_{h,\,grid}^2 &\le C\left(M_4^2 \delta^4 + M_1^2 \delta +  M_1^2 \sqrt{\rho_1(h)} + M_2^2 \rho_1(h)\delta + M_1^2 \delta^3\right) \\
&\le  C\left(M_4^2 \delta^4 + M_2^2 \rho_1(h)\delta  + M_1^2 \delta \right).\qedhere
\end{align*}

\end{proof}

\appendix 
\section{Covariance bound for the membrane model in $d=1$}\label{appendix:A}
In this section we consider $d=1$ and the membrane model $(\vr_x)_{x\in V_N}$ on $V_N = \{1,\ldots, N-1\}$ with zero boundary conditions outside $V_N$. We want to show the following bound:
\begin{lemma}\label{lem:one_mm}
There exists a constant $C>0$ such that
\begin{align*}
\E_{V_N} [(\vr_x-\vr_{x+1})^2] \le CN, \quad x\in \Z.
\end{align*}
\end{lemma}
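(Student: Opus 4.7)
The plan is to exploit the explicit ``integrated random walk bridge'' structure of the one-dimensional membrane model. With $\kappa_1=0$, $\kappa_2=1$ the Hamiltonian in $d=1$ is proportional to $\sum_{x}(\Delta\varphi_x)^2$ and, since $2\Delta\varphi_x = (\varphi_{x+1}-\varphi_x)-(\varphi_x-\varphi_{x-1})$, the change of variables $u_x := \varphi_{x+1}-\varphi_x$ turns it (up to a multiplicative constant) into
\[
H(\varphi) \;\propto\; \sum_{x=0}^{N}\bigl(u_x - u_{x-1}\bigr)^2,
\]
which is the energy of a one-dimensional Gaussian nearest-neighbor increment (a discrete-time random walk). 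The zero boundary conditions $\varphi_x=0$ for $x \notin V_N$ give $u_{-1}=\varphi_0-\varphi_{-1}=0$ and $u_N=\varphi_{N+1}-\varphi_N=0$, while the identity $\sum_{x=0}^{N-1}u_x = \varphi_N-\varphi_0 = 0$ provides one extra linear constraint.

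Consequently, under $\prob_{V_N}$, the law of $(u_x)_{-1\le x\le N}$ is that of a centered Gaussian random walk of length $N+1$ pinned at both endpoints to $0$ and further conditioned on the linear constraint $\sum_{x=0}^{N-1}u_x=0$. For the un-pinned Gaussian walk, $\mathrm{Var}(u_x)$ is of order $x+1$; pinning at both endpoints turns it into a Gaussian bridge, whose variance at any time is bounded by $O(N)$. Since conditioning a centered Gaussian vector on any further linear statistic can only decrease the variance of each coordinate (this is an elementary consequence of the projection formula for Gaussian conditional expectations), we obtain
\[
\E_{V_N}\bigl[(\varphi_{x+1}-\varphi_x)^2\bigr] \;=\; \mathrm{Var}(u_x) \;\le\; C N,
\]
uniformly in $x$.

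The only mildly delicate point will be to check the change-of-variable carefully: one must verify that the Gaussian measure on $\mathbb{R}^{V_N}$ with density $\exp(-H(\varphi))$ pushes forward, under $\varphi\mapsto (u_x)_{-1\le x\le N}$ together with the affine constraints described above, to the stated constrained Gaussian walk, with all Jacobian factors accounted for. Once the identification is set up, the bound is a one-line consequence of Gaussian variance monotonicity under linear conditioning, which makes this the cleanest route. An equivalent approach via Brascamp--Lieb (comparing with the same walk measure) would work as well, but is less transparent than the direct identification.
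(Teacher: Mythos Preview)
Your proposal is correct and follows essentially the same route as the paper: both identify the increments $\varphi_{x+1}-\varphi_x$ with a Gaussian random walk subject to two linear constraints (the paper writes this as $Y_{i}$ conditioned on $Z_N=Z_{N+1}=0$, using the integrated random walk representation of \cite{CaravennaDeuschel_pin}) and then bound the conditional variance by the unconditioned one. The only difference is that the paper carries out the Schur-complement formula $A-BD^{-1}B^T$ explicitly and checks by direct calculation that the diagonal correction is positive, whereas you invoke the general fact that Gaussian conditioning never increases marginal variances; this is a legitimate and slightly cleaner shortcut to the same bound.
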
 
\begin{proof}
Let $\{X_i\}_{i\in\N}$ be a sequence of i.i.d. standard Gaussian random variables. We define $\{Y_i\}_{i\in\Z^+}$ to be the associated random walk starting at $0$, that is,
\[ Y_0=0,\quad Y_n=\sum_{i=1}^n X_i,\,\,n\in\N,\]
and $\{Z_i\}_{i\in\Z^+}$ to be the integrated random walk starting at $0$, that is, $Z_0=0$ and for $n\in\N$
\[Z_n=\sum_{i=1}^n Y_i.\]
Then one can show that $\prob_{V_N}$ is the law of the vector $(Z_1,\ldots,Z_{N-1})$ conditionally on $Z_N=Z_{N+1}=0$ \cite[Proposition 2.2]{CaravennaDeuschel_pin}. 
So we have that
$$\E_{V_N}\left[ (\vr_{i+1}-\vr_i)^2\right]= \E\left[ (Z_{i+1}-Z_i)^2| Z_N=Z_{N+1}=0\right]=\E\left[ Y_{i+1}^2| Z_N=Z_{N+1}=0\right].$$
Hence it is enough to find a bound for  $\E[Y_i^2|Z_N=Z_{N+1}=0]$ for $i=1,\ldots,N-1$. 
The covariance matrix $\Sigma$ for $(Y_1,\ldots, Y_{N-1}, Z_{N}, Z_{N+1})$ can be partitioned as
$$\Sigma= \begin{bmatrix}
 A & B \\
 B & D 
\end{bmatrix}$$
where $A$ is a $(N-1) \times (N-1)$ matrix with entries
$$A(i,j)= \cov( Y_i, Y_j)= \min\{i,\,j\}.$$
$B(i,j)$ and $C(i,j)$ are $(N-1)\times 2$ and $2\times (N-1)$ matrices respectively, with $C=B^T$ and
$$B(i,j)= \cov(Y_i,Z_{j+N-1})= \sum_{l=1}^{j+N-1} \min\{i,\,l\}.$$
Finally, $D$ is a $2\times 2$ matrix with 
$$D(i,j)= \cov(Z_{i+N-1},Z_{j+N-1}).$$
It easily follows that
\begin{equation}\label{eq:Dmatrix}
D=\frac{1}{6}\begin{bmatrix}
N(N+1) (2N+1) & N(N+1)(2N+4)\\
N(N+1)(2N+4) & (N+1)(N+2)(2N+3)
\end{bmatrix}.
\end{equation}

It is well known that $(Y_1,\ldots,Y_{N-1}|Z_N=Z_{N+1}=0)$ is a Gaussian vector with mean zero and covariance matrix given by $A-BD^{-1}C$. The inverse of $D$ is as follows.
Observe$$\gamma_N:= \det(D)= \frac{1}{36} N(N+1)^2(8N^2+3N+6)$$
and 
$$D^{-1}= \frac1{\gamma_N}\begin{bmatrix}
D(2,2)\quad -D(1,2)\\
-D(2,1)\quad D(1, 1)
\end{bmatrix}$$

%
Now the diagonal element of $BD^{-1}C$ can be determined:
\begin{align*}
(BD^{-1}C)(i,i)&=\frac1{\gamma_N}\left[\left(\sum_{l=1}^{N} \min\{i,\,l\}\right)^2D(2,2)-\left(\sum_{l=1}^{N} \min\{i,\,l\}\right)\left(\sum_{l=1}^{N+1} \min\{i,\,l\}\right) D(1,2)\right.\\
&\left.-\left(\sum_{l=1}^{N} \min\{i,\,l\}\right)\left(\sum_{l=1}^{N+1} \min\{i,\,l\}\right) D(1,2) + \left(\sum_{l=1}^{N+1} \min\{i,\,l\}\right)^2D(1,1)\right].
\end{align*}
Plugging in the entries $D(i,j)$ from \eqref{eq:Dmatrix} and simplifying we get
$$(BD^{-1}C)(i,i)=\frac{i^2(N+1)}{24\gamma_N}\left[6N^2-12Ni+6i^2+4N\right]>0$$ 
This shows that for $i=1,2,\ldots, N-1$,
 $$\E[Y_i^2|Z_N=Z_{N+1}=0]=A(i,i)-(BD^{-1}C)(i,i)<i.$$ 
Similar bound can be obtained for $\E[Y_N^2|Z_N=Z_{N+1}=0]$ and this completes the proof. 
\end{proof}

\section{Details on the space $\mathcal H^{-s}_{-\Delta+\Delta^2}(D)$}\label{appendix:B}
In this section we briefly describe some of the details regarding the space $\mathcal H^{-s}_{-\Delta+\Delta^2}(D)$ and also about the spectral theory of $-\Delta_c+\Delta^2_c$. This is an elliptic operator, and the spectral theory is similar to that of either $-\Delta_c$ or $\Delta_c^2$.
First recall the standard Sobolev inner products on $H^1_0(D)$ and $H^2_0(D)$. They are
$$\left\langle u, v\right\rangle_{1}= \int_D \nabla u\cdot \nabla v \De x,\quad u,\,v\in H^1_0(D)$$ and 
$$\left\langle u, v\right\rangle _{2}= \int_D \Delta u \Delta v \De x,\quad u,\,v\in H^2_0(D)$$
and they induce norms on $H^1_0(D)$ and $H^2_0(D)$ respectively which are equivalent to the standard Sobolev norms~\cite[Corollary~2.29]{GGS}. We now consider the following inner product on $H^2_0(D)$: 
$$\left\langle u, v\right\rangle _{mixed}:= \int_D \nabla u\cdot \nabla v \De x + \int_D \Delta u \Delta v \De x,\,\,u,v\in H^2_0(D).$$ 
Clearly the norm induced by this inner product is equivalent to the norm $\|\cdot\|_{H^2_0}$ (by integration by parts). We consider $H^{-2}(D)$ to be the dual of $(H^2_0(D),\,\|\cdot\|_{mixed})$.

We now give some results whose proofs are similar to Theorem 3.2 and 3.3 of \cite{mm_scaling}.
\begin{enumerate}
\item There exists a bounded linear isometry
	$$T_0:H^{-2}(D)\rightarrow (H^2_0(D),\,\|\cdot\|_{mixed})$$ 
	such that, for all $f\in H^{-2}(D)$ and for all $v\in H^2_0(D)$, $$(f\,,\,v)= \la v,\,T_0 f\ra_{mixed}.$$ Moreover, the restriction $T$ on $L^2(D)$ of the operator $i\circ T_0 :H^{-2}(D)\rightarrow L^2(D) $ 
	is a compact and self-adjoint operator, where $i: (H^2_0(D),\,\|\cdot\|_{mixed}) \hookrightarrow L^2(D)$ is the inclusion map.
	
\item There exist $v_1,\, v_2,\, \ldots $ in $(H_0^2(D),\|\cdot\|_{mixed})$ and numbers $0<\mu_1\le\mu_2\le \cdots \to\infty$ such that
	\begin{itemize}
		\item $\{v_j\}_{j\in \N}$ is an orthonormal basis for $L^2(D)$,
		\item $Tv_j=\mu^{-1}_jv_j$,
		\item $\la v_j,v \ra_{mixed}=\mu_j\la v_j, v \ra_{L^2}$  for all $v\in H^2_0(D)$,
		\item $\{\mu^{-1/2}_jv_j\}$ is an orthonormal basis for $(H_0^2(D),\|\cdot\|_{mixed})$.
	\end{itemize}
\end{enumerate}

For each $j\in \N$ one has $v_j\in C^\infty(D).$ Moreover $v_j$ is an eigenfunction of $-\Delta_c + \Delta_c^2$ with eigenvalue $\mu_j$. Indeed, we have for all $v\in H_0^2(D)$
	\begin{align*} 
	\la (-\Delta_c + \Delta_c^2) v_j,\,v \ra_{L^2} = \la (-\Delta_c) v_j,\,v \ra_{L^2} + \la (\Delta_c^2) v_j,\,v \ra_{L^2}\stackrel{GI}{=}\la v_j,\,v \ra_{mixed} =\mu_j\la v_j,\,v \ra_{L^2}
	\end{align*}
	where ``GI'' stands for Green's first identity
	\[
	\int_D u\Delta v \De V=-\int_D\nabla u\cdot\nabla v \De V+\int_{\partial D}u\nabla v\cdot\mathbf n \De S.
	\]
	Thus $v_j$ is an eigenfunction of $-\Delta_c + \Delta_c^2$ with eigenvalue $\mu_j$ in the weak sense. The smoothness of 
	$v_j$ follows from the fact that $-\Delta_c + \Delta_c^2$ is an elliptic operator with smooth coefficients 
	and the elliptic regularity theorem \cite[Theorem~9.26]{FollandReal}. Hence $v_j$ is an 
	eigenfunction of $-\Delta_c + \Delta_c^2$ with eigenvalue $\mu_j$.
As a consequence of the above, one easily has that
	%
	\begin{equation}\label{rem:mixed_series} \|f\|_{mixed}^2 = \sum_{j\ge 1} \mu_j \la f, v_j\ra^2_{L^2}\end{equation}
	for any $f\in H^2_0(D)$.

For any $v\in C_c^\infty(D)$ and for any $s>0$ we define $$\|v\|_{s, \,-\Delta + \Delta^2}^2:=\sum_{j\in \N}\mu_j^{s/2}\la v,v_j\ra_{L^2}^2.$$ We define $\mathcal H_{-\Delta+\Delta^2, 0}^{s}(D)$ to be the Hilbert space completion of $C_c^\infty(D)$ 
with respect to the norm $\|\cdot\|_{s, \,-\Delta + \Delta^2}$.
Then $\left(\mathcal H_{-\Delta+\Delta^2, 0}^{s}(D) \,,\,\|\cdot\|_{s, \,-\Delta + \Delta^2}\right) $ is a Hilbert space for all $s>0$.
Moreover, we also notice the following.
	\begin{itemize}
		\item Note that for $s=2$ we have $\mathcal H_{-\Delta+\Delta^2, 0}^{2}(D)= (H_0^2(D),\,\|\cdot\|_{mixed})$ by~\eqref{rem:mixed_series}.
		\item $i:\mathcal H_{-\Delta+\Delta^2, 0}^{s}(D)\hookrightarrow L^2(D)$ is a continuous embedding.
	\end{itemize}
%
For $s>0$ we define $\mathcal H^{-s}_{-\Delta+\Delta^2}(D)= (\mathcal H_{-\Delta+\Delta^2, 0}^{s}(D))^*$, the dual space of $\mathcal H_{-\Delta+\Delta^2, 0}^{s}(D)$. Then we have $$\mathcal H_{-\Delta+\Delta^2, 0}^s(D) \subseteq L^2(D) \subseteq  \mathcal H^{-s}_{-\Delta+\Delta^2}(D).$$
One can show using the Riesz representation theorem that for $s>0$, and $v\in L^2(D)$ the norm of $\mathcal H^{-s}_{-\Delta+\Delta^2}(D)$ is given by
\[
\|v\|_{-s, \,-\Delta + \Delta^2}^2:=\sum_{j\in \N}\mu_j^{-s/2}\la v,\,v_j\ra_{L^2}^2. 
\]

Before we show the definition of the continuum mixed model, we need an analog of Weyl's law for the eigenvalues of the operator $-\Delta_c + \Delta_c^2$. 
\begin{proposition}[{\citet[Theorem~5.1]{Beals:1967}, \citet{Pleijel:1950}}]\label{prop:mixed_Weyl}
	There exists an explicit constant $c$ such that, as $j\uparrow+\infty$,
	\[
	\mu_j\sim c^{-d/4}j^{4/d}.
	\]
\end{proposition}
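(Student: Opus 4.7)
The plan is to derive the Weyl-type asymptotic by comparing $\mu_j$ with the eigenvalues $\lambda_j$ of $\Delta_c^2$ on $D$ with clamped Dirichlet boundary conditions ($u = \nabla u = 0$ on $\partial D$), for which the law $\lambda_j \sim c^{-d/4}j^{4/d}$ has already been invoked above through \cite[Proposition~3.8]{mm_scaling}. The heuristic principle is that $-\Delta_c + \Delta_c^2$ and $\Delta_c^2$ share the same principal symbol $|\xi|^4$ and the same form domain $H_0^2(D)$, so the lower-order term $-\Delta_c$ is a compact perturbation which does not affect the leading order asymptotics. I would make this quantitative via the Courant--Fischer min-max principle rather than invoking the full machinery of Beals--Pleijel.

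First, the lower bound $\mu_j \ge \lambda_j$ follows at once from the min-max principle on the common ambient space $H^2_0(D)$, since for every $u \in H^2_0(D)$
\[
\langle(-\Delta_c + \Delta_c^2)u, u\rangle_{L^2} = \|\nabla u\|_{L^2}^2 + \|\Delta_c u\|_{L^2}^2 \ge \|\Delta_c u\|_{L^2}^2.
\]
For the matching upper bound, I would interpolate $\|\nabla u\|_{L^2}^2$ between $\|\Delta_c u\|_{L^2}^2$ and $\|u\|_{L^2}^2$: integration by parts combined with Cauchy--Schwarz and AM-GM yields, for any $\epsilon>0$ and $u \in H^2_0(D)$,
\[
\|\nabla u\|_{L^2}^2 = -\langle\Delta_c u, u\rangle_{L^2} \le \tfrac{\epsilon}{2}\|\Delta_c u\|_{L^2}^2 + \tfrac{1}{2\epsilon}\|u\|_{L^2}^2.
\]
Testing min-max against the $j$-dimensional subspace $V_j := \operatorname{span}(u_1,\dots,u_j) \subset H^2_0(D)$ of the first $j$ eigenfunctions of the bilaplacian (on which $\max_{u\in V_j,\,\|u\|_{L^2}=1}\|\Delta_c u\|_{L^2}^2 = \lambda_j$) gives
\[
\mu_j \le \bigl(1+\tfrac{\epsilon}{2}\bigr)\lambda_j + \tfrac{1}{2\epsilon}.
\]

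Dividing by $\lambda_j$, letting $j\to\infty$ (so $\lambda_j\to\infty$) and then $\epsilon\to 0$ yields $\mu_j/\lambda_j\to 1$. Combined with the known Weyl asymptotics for $\lambda_j$, this gives the stated $\mu_j \sim c^{-d/4}j^{4/d}$ with the same constant $c$ as for the pure bilaplacian. The only non-routine point is the verification that the test subspace $V_j$ sits inside the form domain common to the two operators --- and this is immediate from the definition of the Dirichlet eigenfunctions of $\Delta_c^2$. Identifying the explicit constant $c$ (which reduces to the standard Weyl constant $\omega_d|D|/(2\pi)^d$ for an operator of order four) follows from the identification already made in the bilaplacian case, so no extra work beyond the comparison is needed.
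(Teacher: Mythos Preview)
Your argument is correct and takes a genuinely different route from the paper. The paper does not compare quadratic forms at all: instead it verifies directly that the realization $A_1 := (-\Delta_c+\Delta_c^2)|_{H^4(D)\cap H^2_0(D)}$ is a closed self-adjoint operator on $L^2(D)$ (symmetry via Green's identity, surjectivity onto $L^2(D)$ from elliptic regularity \cite[Corollary~2.21]{GGS}, and then self-adjointness via \cite[Theorem~13.11]{rudin_functional}), and applies the general Weyl asymptotic for higher-order elliptic operators from Beals~\cite[Theorem~5.1]{Beals:1967} as a black box. Your min-max comparison with the clamped bilaplacian is more elementary and self-contained; it also makes explicit that the constant $c$ coincides with the bilaplacian constant already quoted, a point the paper leaves buried in the citation. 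The trade-off is generality: the paper's route would apply verbatim to any lower-order perturbation of $\Delta_c^2$, whereas your upper bound $\mu_j \le (1+\tfrac{\epsilon}{2})\lambda_j + \tfrac{1}{2\epsilon}$ exploits the specific structure $\|\nabla u\|_{L^2}^2 = -\langle \Delta_c u, u\rangle_{L^2}$ of the perturbing term. In the present context, where $\lambda_j \sim c^{-d/4}j^{4/d}$ has already been invoked, your approach is arguably the cleaner one.
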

\begin{proof}
We want to apply Theorem 5.1 of \cite{Beals:1967} for $A:= -\Delta_c + \Delta_c^2$. First note that $A$ is an elliptic operator of order $m=4$ defined on $D$ having smooth coefficients. Let us consider $A_1:= (-\Delta_c+\Delta_c^2)|_{H^4(D)\cap H^2_0(D)}$.
Clearly, $A_1: H^4(D)\cap H^2_0(D) \rightarrow L^2(D)$ and also $C_c^\infty(D) \subset D(A_1) \subset H^4(D)$, where $D(A_1)$ is the domain of $A_1$. By elliptic regularity we have $D(A_1^p) \subset H^{4p}, \, p = 1,\,2,\,\ldots$ We first show that $A_1$ is self-adjoint. Note that as $C_c^\infty(D) \subset D(A_1) $ and $C_c^\infty(D)$ is dense in $L^2(D)$, $A_1$ is densely defined.
Again, by Green's identity we have for all $u,v \in H^4(D)\cap H^2_0(D)$
\begin{align*}
\la (-\Delta_c + \Delta_c^2)u,\, v\ra_{L^2} = \la \nabla u,\, \nabla v\ra_{L^2} + \la \Delta_c u,\, \Delta_c v\ra_{L^2} = \la  u,\, (-\Delta_c + \Delta_c^2)v\ra_{L^2}.
\end{align*}
Thus $A_1$ is symmetric. Also by Corollary 2.21 of \citet{GGS} we observe that image of $A_1$ is $ L^2(D)$. The self-adjointness of $A_1$ now follows from Theorem 13.11 of \cite{rudin_functional}. Also we conclude from Theorem 13.9 of \cite{rudin_functional} that $A_1$ is closed. 
Now applying Theorem 5.1 of \cite{Beals:1967} we get the asymptotic.
\end{proof}
The result we will prove now shows the well-posedness of the series expansion for $\Psi^{-\Delta+\Delta^2}$.
\begin{proposition}\label{prop:mixed_series_rep_h}
	Let $(\xi_j)_{j\in \N}$ be a collection of i.i.d. standard Gaussian random variables. Set
	\[
	\Psi^{-\Delta+\Delta^2}:=\sum_{j\in \N}\mu_j^{-1/2}\xi_j v_j.
	\]
	Then $\Psi^{-\Delta+\Delta^2}\in \mathcal H^{-s}_{-\Delta+\Delta^2}(D)$ a.s. for all $s>({d-4})/2$.
\end{proposition}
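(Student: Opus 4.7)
The plan is to show that $\mathbb{E}\bigl[\|\Psi^{-\Delta+\Delta^2}\|_{-s,\,-\Delta+\Delta^2}^2\bigr]<\infty$ for every $s>(d-4)/2$, from which the a.s.\ membership in $\mathcal H^{-s}_{-\Delta+\Delta^2}(D)$ follows immediately. To make sense of the dual pairing of the random series with the test functions $v_k$, I would first work with truncations $\Psi_N:=\sum_{j=1}^N\mu_j^{-1/2}\xi_j v_j$, which are genuine elements of $L^2(D)\subset \mathcal H^{-s}_{-\Delta+\Delta^2}(D)$. Since $\{v_j\}$ is an $L^2$-orthonormal family, one has $\langle \Psi_N,v_k\rangle_{L^2}=\mu_k^{-1/2}\xi_k$ for $k\leq N$ and $0$ otherwise, so
\begin{equation*}
\|\Psi_N\|_{-s,\,-\Delta+\Delta^2}^2=\sum_{k=1}^N\mu_k^{-s/2}\,\mu_k^{-1}\,\xi_k^2=\sum_{k=1}^N\mu_k^{-s/2-1}\xi_k^2.
\end{equation*}

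Next I would take expectations, use $\mathbb{E}[\xi_k^2]=1$, and invoke Weyl's law in the form provided by Proposition~\ref{prop:mixed_Weyl}: $\mu_j\sim c^{-d/4}j^{4/d}$. This gives
\begin{equation*}
\mathbb{E}\bigl[\|\Psi_N\|_{-s,\,-\Delta+\Delta^2}^2\bigr]=\sum_{k=1}^N\mu_k^{-s/2-1}\leq C\sum_{k=1}^N k^{-(2s+4)/d},
\end{equation*}
and the right-hand side is bounded uniformly in $N$ precisely when $(2s+4)/d>1$, i.e.\ when $s>(d-4)/2$.

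Having this uniform bound, a routine monotone/dominated convergence argument shows that $\{\Psi_N\}$ is Cauchy in $L^2(\Omega;\mathcal H^{-s}_{-\Delta+\Delta^2}(D))$: for $M<N$,
\begin{equation*}
\mathbb{E}\bigl[\|\Psi_N-\Psi_M\|_{-s,\,-\Delta+\Delta^2}^2\bigr]=\sum_{k=M+1}^N\mu_k^{-s/2-1}\longrightarrow 0
\end{equation*}
as $M,N\to\infty$. The completeness of $L^2(\Omega;\mathcal H^{-s}_{-\Delta+\Delta^2}(D))$ then yields a limit $\Psi^{-\Delta+\Delta^2}$ in this Bochner space, which agrees with the formal series of the statement. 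Passing to a subsequence gives a.s.\ convergence in $\mathcal H^{-s}_{-\Delta+\Delta^2}(D)$, and Fatou's lemma gives $\mathbb{E}\bigl[\|\Psi^{-\Delta+\Delta^2}\|_{-s,\,-\Delta+\Delta^2}^2\bigr]\leq\sum_{k\geq 1}\mu_k^{-s/2-1}<\infty$. In particular $\|\Psi^{-\Delta+\Delta^2}\|_{-s,\,-\Delta+\Delta^2}<\infty$ almost surely, proving the claim.

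There is no real obstacle: the argument is the standard Karhunen--Loève construction of a Gaussian random distribution adapted to the basis $\{v_j\}$. The only nontrivial ingredient is the Weyl-type eigenvalue asymptotics $\mu_j\sim c^{-d/4}j^{4/d}$, which has been established separately in Proposition~\ref{prop:mixed_Weyl}; once it is in hand, the summability threshold $s>(d-4)/2$ is just the elementary convergence condition for the $p$-series $\sum k^{-(2s+4)/d}$.
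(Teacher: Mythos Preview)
Your proof is correct and reaches the same conclusion as the paper through a slightly different route. The paper computes the squared norm $\|\Psi^{-\Delta+\Delta^2}\|_{-s,\,-\Delta+\Delta^2}^2=\sum_{j\ge 1}\mu_j^{-s/2-1}\xi_j^2$ directly and then invokes Kolmogorov's two-series theorem, verifying both $\sum_j\E[\mu_j^{-s/2-1}\xi_j^2]<\infty$ (giving the threshold $s>(d-4)/2$) and $\sum_j\var(\mu_j^{-s/2-1}\xi_j^2)<\infty$. You instead work with truncations and show they are Cauchy in the Bochner space $L^2(\Omega;\mathcal H^{-s}_{-\Delta+\Delta^2}(D))$, which only requires the first-moment condition; your argument is arguably cleaner, since for a series of non-negative terms the finiteness of the expected sum already forces a.s.\ convergence, making the variance check in the two-series theorem redundant. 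In both cases the only substantive input is the Weyl asymptotic $\mu_j\sim c^{-d/4}j^{4/d}$ from Proposition~\ref{prop:mixed_Weyl}, and the resulting summability threshold is identical.
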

\begin{proof}
	Fix $s>({d-4})/2$. Clearly $v_j\in L^2(D)\subseteq \mathcal H^{-s}_{-\Delta+\Delta^2}(D)$. We need to show that $\|\psi\|_{-s, \,-\Delta + \Delta^2}<+\infty$ almost surely. Now this boils down to showing the finiteness of the random series
	$$\|\psi\|_{-s, \,-\Delta + \Delta^2}^2=\sum_{j\ge 1} \mu_j^{-s/2} \left(\sum_{k\ge 1} \mu_k^{-1/2} u_k \xi_k\, , v_j\right)^2=\sum_{j\ge 1} \mu_j^{-\frac{s}{2}-1}\xi_j^2 $$
	where the last equality is true since $(v_j)_{j\ge 1}$ form an orthonormal basis of $L^2(D)$. Observe that the assumptions of Kolmogorov's two-series theorem are satisfied: indeed using Proposition~\ref{prop:mixed_Weyl} one has 
	\[\sum_{j\ge 1}\E\left(\mu_j^{-\frac{s}{2}-1}\xi_j^2\right)\asymp \sum_{j\ge 1} j^{-\frac{4}{d}\left(\frac{s}{2}+1\right)}<+\infty\]
	for $s>(d-4)/2$ and 
	\[\sum_{j\ge 1}\var\left(\mu_j^{-\frac{s}{2}-1}\xi_j^2\right)\asymp \sum_{j\ge 1} j^{-\frac{4}{d}(s+2)}<+\infty\] 
	for $s>(d-8)/4$. The result then follows.
\end{proof}

\section{Random walk representation of the $(\nabla+\Delta)$-model in $d=1$ and estimates}\label{appendix:C}
In this Appendix we recall some of the notations about the $d=1$ case which were used in the heuristic explanations of the Introduction. We take advantage of the representation of the mixed model given in \citet[Subsection 3.3.1]{borecki2010} in our setting. To do that we set $\beta_N:= 16
\kappa_N$.

Let \begin{equation}\label{eq:gamma}
    \gamma= \left(\frac{1+\beta_N-\sqrt{1+2\beta_N}}{1+\beta_N+\sqrt{1+2\beta_N}}\right)^{1/2}
\end{equation}
and let $(\eps_i)_{i\in \Z^{+}}$ be i.i.d. $\mathcal N(0,\sigma^2)$ with 
\begin{equation}\label{eq:sigma}
\sigma^2= 4/(1+\beta_N+\sqrt{1+2\beta_N}).
\end{equation}
Define
$$Y_n= \gamma^{n-1}\eps_1+\ldots+ \gamma^0 \eps_n= \sum_{i=1}^{n}\gamma^{n-i}\eps_i.$$
Let the integrated walk be denoted by
$$W_n=\sum_{i=1}^n Y_i= r_{n-1}\eps_1+\ldots+r_0 \eps_n= \sum_{i=1}^n r_{n-i}\eps_i$$
where $r_{n-i}= \sum_{i=0}^{n-i} \gamma^i$.

We consider the case when $\kappa_N\to \infty$ and note that then $\gamma=\gamma_N\to 1$ and $\sigma_N^2=\sigma^2\to 0$. 
The following representation will give an idea on how the phase transition occurs in the mixed model:
$$W_n= \frac{1}{1-\gamma}(\eps_1+\cdots+\eps_n)-\frac{1}{1-\gamma}(\gamma^n\eps_1+\gamma^{n-1}\eps_2+\cdots+\gamma \eps_n).$$
We recall the following Proposition from \citet[Proposition 1.10]{borecki2010}.
\begin{proposition} 
Let $\prob_{N}(\cdot)$ be the mixed model with $0$ boundary conditions. Then
$$\prob_{N}(\cdot)= \prob\left( (W_1,\ldots, W_{N-1})\in \cdot| W_N=W_{N+1}=0\right)$$
\end{proposition}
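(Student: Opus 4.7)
The strategy is to write down the joint density on each side of the claimed identity and match them as Gaussian laws on $\R^{N-1}$. The left-hand side is Gaussian with density proportional to $\exp(-H(\phi))$ on $\{\phi : \phi_x = 0 \text{ for } x \notin \{1,\ldots,N-1\}\}$; the right-hand side, being the conditional of a jointly Gaussian vector, is also centered Gaussian. So it suffices to verify that the two quadratic forms in $(W_1,\ldots,W_{N-1})$ coincide.

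First I would translate the $(\eps_i)$--based description of $W$ into a second-order recursion. By construction $Y_n - \gamma Y_{n-1} = \eps_n$ with $Y_0 := 0$, and since $Y_n = W_n - W_{n-1}$, rearrangement gives
\[
\eps_n = W_n - (1+\gamma) W_{n-1} + \gamma W_{n-2}, \qquad n \ge 1,
\]
with the convention $W_{-1} = W_0 = 0$. The linear map $(\eps_1,\ldots,\eps_{N+1}) \mapsto (W_1,\ldots,W_{N+1})$ is then lower-triangular with unit diagonal, so it has Jacobian one, and the joint density of $(W_1,\ldots,W_{N+1})$ is proportional to $\exp\bigl(-\tfrac{1}{2\sigma^2}\sum_{n=1}^{N+1}(W_n - (1+\gamma)W_{n-1} + \gamma W_{n-2})^2\bigr)$. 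Conditioning on $\{W_N = W_{N+1} = 0\}$ yields a density on $\R^{N-1}$ proportional to the same exponential evaluated at $W_N = W_{N+1} = 0$.

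Next I would simplify this quadratic form. Setting $u_n := W_n - W_{n-1}$, one has $W_n - (1+\gamma)W_{n-1} + \gamma W_{n-2} = u_n - \gamma u_{n-1}$, and the boundary conditions give $u_0 = u_{N+1} = 0$. Expanding and using the identity $2u_n u_{n-1} = u_n^2 + u_{n-1}^2 - (u_n - u_{n-1})^2$ together with $\sum_{n=1}^{N+1} u_n^2 = \sum_{n=1}^{N+1} u_{n-1}^2$ (which holds because $u_0 = u_{N+1}=0$), the cross term converts into a sum of squared second differences, producing
\[
\sum_{n=1}^{N+1}(u_n - \gamma u_{n-1})^2 = (1-\gamma)^2 \sum_{n=1}^{N+1} u_n^2 + \gamma \sum_{n=1}^{N+1}(u_n - u_{n-1})^2.
\]

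Finally, interpreting $W$ as $\phi$ extended by zero outside $\{1,\ldots,N-1\}$, one has $\sum u_n^2 = \sum_x (\phi_{x+1}-\phi_x)^2$ and $\sum (u_n - u_{n-1})^2 = \sum_x (\phi_{x+1}+\phi_{x-1}-2\phi_x)^2$; comparing coefficients with $H(\phi)$ in \eqref{eq:ham:mixed} reduces the identification of the two densities to two scalar equations in $(\gamma,\sigma^2)$. The values \eqref{eq:gamma} and \eqref{eq:sigma} are defined precisely so that both equations hold, which completes the identification. The main obstacle is the careful boundary bookkeeping in the rearrangement of the quadratic form: one must track that every ``edge'' contribution in the expansion cancels because of $u_0 = u_{N+1} = 0$, and that the resulting scalar matching conditions are compatible with the algebraic definitions of $\gamma$ and $\sigma^2$.
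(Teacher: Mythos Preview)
The paper does not give its own proof of this proposition; it is quoted directly from \citet[Proposition~1.10]{borecki2010}. Your argument---reducing to a comparison of Gaussian quadratic forms via the second-order recursion $\eps_n = W_n - (1+\gamma)W_{n-1} + \gamma W_{n-2}$, and then rearranging $\sum_{n}(u_n - \gamma u_{n-1})^2$ into a gradient piece $(1-\gamma)^2\sum_n u_n^2$ and a second-difference piece $\gamma\sum_n(u_n-u_{n-1})^2$---is correct and is exactly the standard route to this identity. The boundary bookkeeping ($u_0=u_{N+1}=0$, hence $\sum_{n=1}^{N+1}u_n^2=\sum_{n=1}^{N+1}u_{n-1}^2$) is handled properly.

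The one step you leave implicit is the explicit verification of the two scalar matching conditions between $(1-\gamma)^2/(2\sigma^2),\ \gamma/(2\sigma^2)$ and the coefficients of $H$. This is routine, but it is precisely where the paper's dictionary $\beta_N=16\kappa_N$ between Borecki's parametrization and its own enters, so it is worth writing out once rather than asserting.
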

Let $(\widetilde \eps_i)_{i\in \Z^{+}}$ be i.i.d.\ $\mathcal N\left(0, \frac{\sigma^2}{(1-\gamma)^2}\right)$. Then $W_n$ can be written as
$$W_n= S_n-U_n$$
where $S_n=\sum_{k=1}^n \widetilde \eps_k$ and
$U_n= \gamma^n \widetilde \eps_1+\gamma^{n-1} \widetilde \eps_2+\cdots+ \gamma  \widetilde \eps_n $.
The conditional integrated random walk process has a representation, stated in Proposition 3.7 of \cite{borecki2010}. 
Let
$$\prob\left((\widehat W_1, \ldots, \widehat W_{N-1})\in \cdot\right)=\prob\left( (W_1,\ldots, W_{N-1})\in \cdot| W_N=W_{N+1}=0\right).$$
Then
$$\widehat W_k= W_k- W_Nr_1(k)-W_{N+1}r_2(k)$$
where $r_1(k)= s_1(k)/r(k)$ and $r_2(k)=s_2(k)/r(k)$.
The definitions of $r(k)$ and $s_i(k)$ for $i=1,2$ are as follows:
\begin{align*}
&r(k)=(-1+\gamma)(-1+\gamma^{N+1})\left(-N+\gamma(2+N+\gamma^N(-2+(-1+\gamma)N))\right),\\
    &s_1(k)= (-k+\gamma(1-\gamma^k+k))+\gamma^{3+2N+k}(1+\gamma^k(-1+(-1+\gamma)k))\\
    &+\gamma^{N-k}(\gamma^k(-\gamma+\gamma^3)(1-k+N)+\gamma^{2+2k}(2+N-\gamma(1+N))+\gamma(1+N-\gamma(2+N))),
\end{align*}
and
\begin{align*}
    &s_2(k)=\gamma(\gamma^{1+k}+k-\gamma(1+k))+\gamma^{2+2N-k}(-1+\gamma^k(1+k-\gamma k))\\
    &+\gamma^{1+N-k}(\gamma+\gamma^k(-1+\gamma^2)(k-N)-N+\gamma N+\gamma^{1+2k}(-1+(-1+\gamma)N)).
\end{align*}
Let us consider the unconditional process $W_n$. Note that
$$ \var( S_n)= \frac{n\sigma^2}{(1-\gamma)^2} ,\quad\quad \var(U_n)= \frac{\sigma^2\gamma^2 (1-\gamma^{2n})}{(1-\gamma)^2(1-\gamma^2)} $$
and 
$$\cov(S_n, U_n)= \frac{\gamma\sigma^2(1-\gamma^n)}{(1-\gamma)^2(1-\gamma)}.$$
So from here we have
\begin{equation}\label{eq:varW}
\var(W_n)= \frac{n\sigma^2}{(1-\gamma)^2} -\frac{\sigma^2\gamma^2(1-\gamma^n)^2}{(1-\gamma)^3(1+\gamma)}-\frac{2\sigma^2\gamma(1-\gamma^N)}{(1-\gamma)^3(1+\gamma)}.
\end{equation}

From the above expressions one can show that $\var(W_{N-1})\sim N$ when $\kappa=\kappa_N\ll N^2$. We now derive the variance estimate when $\kappa\gg N^2$. For ease of writing, denote 
$$\zeta=\frac{1}{\beta_N}+\sqrt{ \frac{1}{\beta_N}}\sqrt{\frac{1}{\beta_N}+2}\to 0.$$
Furthermore $\gamma=1/(1+\zeta)$ and $\sigma^2= 2/\beta_N(1+\zeta)$. Rewriting~\eqref{eq:varW} in terms of $\zeta$ we have
\begin{align}
\var(W_{N-1})&= \frac{2(N-1)(1+\zeta)^2}{\zeta^2\beta_N(1+\zeta)}-\frac{2(1+\zeta)(1-(1+\zeta)^{-(N-1)})^2}{\beta_N\zeta^3(2+\zeta)}- 4\frac{(1+\zeta)^2(1-(1+\zeta)^{-(N-1)})}{\beta_N\zeta^3(2+\zeta)}\nonumber\\
&= \frac{2(1+\zeta)}{\beta_N(2+\zeta)\zeta^3}\left[ (N-1)(2+\zeta)\zeta-(1-(1+\zeta)^{-(N-1)})^2-2(1+\zeta)(1-(1+\zeta)^{-(N-1)})\right]\label{eq:above}.
\end{align}
Using a Taylor series expansion of the fourth order for the second and third summands in~\eqref{eq:above} (since coefficients up to $\zeta^2$ get cancelled) we obtain that
$$\var(W_{N-1})\approx \frac{(1+\zeta) N(N-1)^2}{\beta_N(2+\zeta)}\approx \frac{N^3}{\beta_N} \approx \frac{N^3}{\kappa_N}.$$

\bibliographystyle{abbrvnat}
\bibliography{biblio}

\end{document}